\newtheorem{pro}{Proposition}[section]
\newtheorem{lem}[pro]{Lemma}
\newtheorem{theo}[pro]{Theorem}
\newtheorem{defi}[pro]{Definition}
\newtheorem{cor}[pro]{Corollary}
\newtheorem{remk}[pro]{Remark}
\newtheorem{assu}[pro]{Assumption}
\newcommand{\ep}{\varepsilon}
\newcommand{\al}{\alpha}
\newcommand{\om}{\omega}
\newcommand{\vp}{\varphi}
\newcommand{\la}{\lambda}
\newcommand{\dt}{\partial_t}
\newcommand{\ds}{\partial_s}
\newcommand{\Rep}{\mbox{Re}\;}
\newcommand{\lra}{\longrightarrow}
\newcommand{\lmt}{\longmapsto}
\newcommand{\nrm}[1]{\mbox{ $ \displaystyle \left\| {#1} \right\| $} }
\newcommand{\nri}[1]{\mbox{ $ \nrm{ {#1} }_{\infty} $} }
\newcommand{\nrE}[1]{\mbox{ $ \nrm{ {#1} }_{E} $} }
\newcommand{\fk}[1]{ \left( {#1} \right) }
\newcommand{\dual}[1]{ \langle {#1} \rangle }
\newcommand{\semim}[1]{ [ {#1} ]_- }     %\semi = Brackets  [ , ]-
\newcommand{\bk}[1]{ \left\{ {#1} \right\} }
\newcommand{\btr}[1]{\mbox{ $ \left| {#1} \right| $ }}
\newcommand{\re}{{\bf\Bbb R}}
\newcommand{\rep}{{\bf\Bbb R^+}}
\newcommand{\za}{{\bf\Bbb N}}
\newcommand{\rat}{{\bf\Bbb Q}}
\newcommand{\jv}[2]{\mbox{$ \fk{{#1},\mbox{$ {#2} $}} $} }
\newcommand{\rv}[1]{ \jv{\re}{#1} }
\newcommand{\rpx}{ \jv{\rep}{X} }
\newcommand{\rx}{\rv{X}}
\newcommand{\apx}{\mbox{ $ AP \rx $ } }
\newcommand{\ctq}{\mbox{$ C([0,T]^2) $}}
\newcommand{\bucrpx}{\mbox{$BUC \rpx $ }}
\newcommand{\bucrx}{\mbox{$BUC \rx $ }}
\newcommand{\intO}{\int_{0}^{\infty}}
\newcommand{\res}[1]{\frac{1}{\lambda}e^{-\frac{#1}{\lambda}}}
\newcommand{\resm}[1]{\frac{1}{\mu}e^{-\frac{#1}{\mu}}}
\newcommand{\nres}[1]{e^{-\frac{#1}{\lambda}}}
\newcommand{\nresl}[1]{\frac{e^{-\frac{#1}{\lambda}}}{\la}}
\newcommand{\nresm}[1]{e^{-\frac{#1}{\mu}}}
\newcommand{\Funk}[5]{ \begin{array}{ccccc}
                       {#1} & : & {#2} & \lra & {#3} \\
                            &   & {#4} & \lmt & \displaystyle{#5} 
                       \end{array}                       }
\newcommand{\Jl}{J_{\lambda}}
\newcommand{\Jlw}{J_{\lambda}^{\om}}
\newcommand{\hw}{h^{\om}}
\newcommand{\Lw}{L^{\om}}
\newcommand{\dl}{\frac{1}{\lambda}}
\newcommand{\ki}{\omega}
\newcommand{\dm}{\frac{1}{\mu}}
\newcommand{\lpmplmw}{\lambda+\mu-\lambda\mu\ki}
\newcommand{\alm}{\frac{\mu}{\lpmplmw}\dl}
\newcommand{\glm}{\frac{\lambda}{\lpmplmw}\dm}
\newcommand{\dlpmp}[1]{\frac{#1}{\lpmplmw} }
\newcommand{\ul}{u_{\lambda}}
\newcommand{\um}{u_{\mu}}
\newcommand{\vl}{v_{\la}}
\newcommand{\ue}{u_{\la,\ep}}
\newcommand{\el}[1]{e^{\fk{-\frac{#1}{\lambda}}}}
\newcommand{\modBessel}[2]{I_0\fk{2\sqrt{\gamma\delta{#1}{#2}}}}
\newcommand{\modBesselL}[2]{I_0\fk{2\sqrt{\frac{{#1}{#2}}{\Lambda^2}}}}
\newcommand{\edg}{e^{\delta t+\gamma s}}
\newcommand{\eadbg}[2]{\exp\fk{(\alpha+\delta){#1}+(\beta+\gamma){#2}}}
\newcommand{\eadbgm}[2]{e^{-(\alpha+\delta){#1}-(\beta+\gamma){#2}}}
\newcommand{\dint}{\int_0^t\int_0^s}
\newcommand{\lmw}{\la+\mu-\la\mu\om}
\newcommand{\llmw}{\frac{\la\om-1}{\Lambda}}
\newcommand{\mlmw}{\frac{\mu\om-1}{\Lambda}}
\newcommand{\IO}[1]{I_0\fk{2\sqrt{\frac{#1}{\Lambda^2}}}}
\begin{document}

\title[Asymptotics of Evolution Systems]{Asymptotic Behaviour of Nonlinear Evolution Equations in Banach Spaces}
\author{Josef Kreulich}
\address{Universit\"at Duisburg-Essen}
\date{}
\maketitle
\begin{abstract}
We show how the approach of Yosida approximation of the derivative serves to obtain new results for evolution systems.  Using this method we obtain multivalued time dependent perturbation results.
Additionally, translation invariant subspaces $Y$ of the
bounded and uniformly
continuous functions are considered, to obtain
criteria for the existence of solutions $u\in Y$ to the
equation
$$
u^{\prime}(t)\in 
   A(t)u(t)+ \om u(t) + f(t), t\in \re,
$$ or of solutions $u$ asymptotically close to $Y$ for the
inhomogeneous differential equation
\begin{eqnarray*}
u^{\prime}(t)&\in& A(t)u(t) + \om u(t) + f(t), \ \ t > 0, \\
                  u(0)&=&u_0,
\end{eqnarray*}
 in general Banach spaces,
where $A(t)$ denotes a possibly nonlinear time dependent dissipative operator. Particular examples for the space $Y$ are spaces of
functions with various almost periodicity properties and more
general types of asymptotic behavior. Further, an application to functional differential equations is given.
\end{abstract}

\section{Introduction}
The object of this paper is to study for given $\om \in \re,$ and $A(t)$ dissipative, the abstract evolution equation
\begin{equation} \label{Half-Line-Eq}
u^{\prime}(t)\in A(t)u(t)+\om u(t), t\in \rep,  u(0)=u_o ,
\end{equation} 
the inhomogeneous equation,
\begin{equation} \label{Half-Line-inh-Eq}
u^{\prime}(t)\in A(t)u(t)+\om u(t)+f(t), t\in \rep,  u(0)=u_o,
\end{equation} 
the corresponding equations on $\re,$
\begin{equation} \label{whole-Line-Eq}
u^{\prime}(t)\in A(t)u(t)+\om u(t), t\in \re,
\end{equation} 
and the inhomogeneous evolution equation on $\re,$
\begin{equation} \label{whole-Line-inh-Eq}
u^{\prime}(t)\in A(t)u(t)+\om u(t)+f(t), t\in \re,
\end{equation} 
respectively. Further, an application to functional differential equations is given.

An answer to the question whether the solutions of the approximate equations
\begin{equation} \label{line-Yosida-Approx}
\fk{\frac{d}{dt}}_{\la}u_{\la}(t) \in A(t)u_{\la}(t)+\om \ul(t)+f(t), t \in \re,
\end{equation}
and
\begin{equation}\label{half-line-Yosida-Approx}
\fk{\frac{d}{dt}}_{\la}u_{\la}(t) \in A(t)u_{\la}(t)+\om \ul(t)+f(t), t \in \rep,
\end{equation}
for the whole line problem, or for an initial value problem respectively, converge uniformly
on $\re,$ or for (\ref{Half-Line-inh-Eq}), on $\rep,$ is given.

The main idea using the linear Yosida approximants of the derivative is due to G. Gripenberg, \cite{Gripenberg} and M.G.Crandall and L-C. Evans \cite{CrandEvans}. This method allows to obtain the solution as the uniform limit of fixpoints. Thus, the properties and behaviour of the solution can be obtained from the invariance of the fixpoint mapping. As the fixpoint mapping mainly depends on the resolvent of $A(t),$ the properties of the resolvent carry over to the solution via uniform convergence. \\

Concerning the asymptotic behaviour, the results in \cite{Kreulichbd} are generalized to the nonautonomous case. Moreover, for the asymptotically almost, and the Eberlein weakly almost periodic case we remove the periodicity condition of the evolution operator \cite{KreulichEwap}, and extend it to more general ones. It is shown that the solutions constructed here are integral solutions. Thus, these solutions are in relation with the limit solutions of the equations (\ref{Half-Line-Eq}) and (\ref{Half-Line-inh-Eq}). This comparison leads to results on the asymptotic behaviour of the initial value problems on the half line. Using this comparison we strengthen and generalize results of \cite{AulbachMinh} to uniform convergence on the whole line of the approximants. Consequently our results apply to more general $A(t),$ and to solutions $u$ with not necessarily relatively compact range.

For evolution equations and delay equations, the element $\om\in \re$ plays an essential role. The existence can be interpreted as a type of friction, which implies a loss of energy. In the linear case the boundedness of the solution, or $u\in Y,$ can be derived introducing spectra conditions on operators related to $A(t),$ and $f,$ compare, \cite{RuessPhong}, \cite{BattyHutterRaebiger}, and for a complete discussion \cite{Chicone}.
In the case of linear Integro-Differential-Equations a very general result is given by \cite{Pruess_int}. In the nonlinear case, $\om \in\re$ replaces the absence of the spectra for $A(t)$.

\section{Integral Solutions}

In this section two types of equations are considered, the initial value problem
\begin{equation} \label{int_Half-Line-Eq}
u^{\prime}(t)\in A(t)u(t)+\om u(t), t\in \rep,  u(0)=u_o ,
\end{equation} 
and
the corresponding equation on $\re,$
\begin{equation} \label{int_whole-Line-Eq}
u^{\prime}(t)\in A(t)u(t)+\om u(t), t\in \re.
\end{equation} 
For these equations we find the integral solution. First some prerequisites.

Let $X$ be a general Banach space. As the given $\om \in\re$ plays a crucial role, let $I\in \bk{\rep,\re}$ in the whole paper, $0<\la,\mu <\frac{1}{\btr{\om}}.$ The assumptions for the family $\bk{A(t):t\in I}$ are:

\begin{assu}\label{general-IVA0} 
  The set $\bk{A(t):t \in I}$ is a family of m-dissipative operators.
\end{assu}
\begin{assu}\label{general-IVE1}
  There exist  $h\in BUC(I,X),  $ and $L: \rep \lra \rep$, continuous and monotone non-decreasing,
such that for $\la >0,$ and $t_1,t_2\in I$ we have
\begin{eqnarray*}
\lefteqn{\nrm{x_1-x_2}} \\
&\le& \nrm{x_1-x_2 -\la(y_1-y_2)}+\la\nrm{h(t_1)-h(t_2)}L(\nrm{x_2}),
\end{eqnarray*}
for all $[x_i,y_i]\in A(t_i),$  $i=1,2.$
\end{assu}
\begin{assu}\label{general-IVE2}
There are bounded and Lipschitz continuous functions $g,h:\rep \to X, $ 
and $L: \rep \lra \rep$ continuous, and monotone non-decreasing, 
such that for $\la >0,$ and $t_1,t_2\in I,$ we have
\begin{eqnarray*}
\lefteqn{\nrm{x_1-x_2}} \\
&\le& \nrm{x_1-x_2 -\la(y_1-y_2)}+\la \nrm{h(t_1)-h(t_2)}L(\nrm{x_2}) \\
&&+ \la \nrm{g(t_1)-g(t_2)}\nrm{y_2},
\end{eqnarray*}
for all $[x_i,y_i]\in A(t_i),$  $i=1,2.$
\end{assu}

As the $\om $ plays a crucial role the for evolution equations, $A(t)+\om I$ is discussed as a perturbation of $A(t)$ by $\om I.$
Consequently we split the assumptions on $A(t)$ and $\om $ as well.
To define the integral solution \cite[Definition 6.18 pp. 217-218]{Ito_Kappel}, due to Ph. Benilan \cite{Benilan}, 
we introduce the functions $[\cdot,\cdot]_- $ and $[\cdot,\cdot]_+ $ on $X\times X$ 
with values in 
$\re$ 
similar to \cite[Definition 1.2 p. 4]{Ito_Kappel}
\begin{eqnarray*}
[y,x]_{+} &:=& \lim_{\al \to 0+}\frac{\nrm{x+\al y}-\nrm{x}}{\al} \\
\semim{y,x} &:=& \lim_{\al \to 0+}\frac{\nrm{x}-\nrm{x-\al y}}{\al}.
\end{eqnarray*}
To define the integral solution coming with $A(t)+\om I$ we have to compute the perturbed control functions.
Applying Assumption \ref{general-IVE2} and the triangle inequality, we obtain for 
$(x_i,y_i)\in A(t_i),$ $i=1,2$
\begin{eqnarray*}
\lefteqn{[y_1+\om x_1-y_2-\om x_2,x_1-x_2]_-} \\
&=&[y_1-y_2,x_1-x_2]_-+\om\nrm{x_1-x_2} \\
&\le&\nrm{h(t_1)-h(t_2)}L(\nrm{x_2}) +\btr{\om}\nrm{g(t_1)-g(t_2)}\nrm{x_2}\\
&& +\nrm{g(t_1)-g(t_2)}\nrm{y_2+\om x_2} + \om \nrm{x_1-x_2}.
\end{eqnarray*}
In the case of Assumption \ref{general-IVE1} we assume $g=0.$

The previous observation leads to the perturbed control functions $h^{\om}$ and $L^{\om}.$ 
 
 $$
 \Funk{h^{\om}}{I}{\fk{X\times X,\nrm{\cdot}_1}}{t}{(h(t),\btr{\om}g(t)),}
 $$
and $L^{\om}(t)=L(t)+t.$  In the future, we write $\nrm{\hw(t)}=\nrm{\hw(t)}_1.$ With these functions and definitions we obtain:
\begin{eqnarray*}
\lefteqn{[y_1+\om x_1-y_2-\om x_2,x_1-x_2]_-} \\
&\le& \om\nrm{x_1-x_2}+\nrm{h^{\om}(t_1)-h^{\om}(t_2)}L^{\om}(\nrm{x_2})+\nrm{g(t_1)-g(t_2)}\nrm{y_2+\om x_2},
\end{eqnarray*}
or written in norms, (applying Proposition  \ref{lambda_plus_mu_ineq})
\begin{eqnarray} \label{resolvent_t_stability}
\lefteqn{(1-\la\om)\nrm{x_1-x_2}} \nonumber \\
&\le& \nrm{x_1-x_2-\la(y_1+\om x_1-y_2-\om x_2)} \\
&&+\la \nrm{h^{\om}(t_1)-h^{\om}(t_2)}L^{\om}(\nrm{x_2})+\la\nrm{g(t_1)-g(t_2)}\nrm{y_2+\om x_2}. \nonumber
\end{eqnarray}

Simply, we computed the corresponding stability inequailities of Assumption \ref{general-IVE1} and Assumption \ref{general-IVE2} for $A(t)+\om I,$ and we are ready for the definition of the integral solution.

\begin{defi}
Let $I=\rep,$ and assume that either the Assumption \ref{general-IVE1} or Assumption \ref{general-IVE2} is satisfied for the family $\bk{A(t):t\in I},$ 
$0\le a <b,$
a continuous function 
$u:[a,b]\to X$ is called an integral solution of (\ref{int_Half-Line-Eq}), if $u(a)=u_a,$ and
\begin{eqnarray*} \label{integral-sol-ineq}
\nrm{u(t)-x}-\nrm{u(r)-x}&\le &\int_r^t\fk{[y,u(\nu)-x]_{+}+\om\nrm{u(\nu)-x}}d\nu \\
&&+\Lw(\nrm{x})\int_r^t\nrm{\hw(\nu)-\hw(r)}d\nu +\nrm{y}\int_r^t\nrm{g(\nu)-g(r)}d\nu
\end{eqnarray*}
for all $a\le r\le t\le b,$ and $[x,y]\in A(r)+\om I.$
\end{defi}

In order to solve the initial value problem (\ref{int_Half-Line-Eq}) , we use the Yosida-approximation of the derivative. This leads to the equation

\begin{equation}\label{int_half-line-Yosida-Approx}
\fk{\frac{d}{dt}}_{\la}u_{\la}(t) \in A(t)u_{\la}(t)+\om \ul(t), u(0)=x_0,t \in \rep,
\end{equation}
with the Yosida-approximation,
\begin{equation}\label{defi-Yosida-Approx}
\fk{\frac{d}{dt}}_{\la}u(s):=\frac{1}{\la}\fk{u(s)-u_0-\frac{1}{\la}\int_0^s\nres{\tau}(u(s-\tau)-u_0)d\tau}.
\end{equation}

\begin{theo} \label{IVA0-IVE2-CauchyConvergence}
Let $I=\rep,$ $u_0\in \hat{D},$  $T>0,$ Assumption \ref{general-IVA0} and Assumption \ref{general-IVE2} be fulfilled, then
$\ul $ is locally uniformly Cauchy in $\la \to 0.$ The limit $u(t):=\lim_{\la\to 0}\ul(t),$ is an integral solution to (\ref{int_Half-Line-Eq}) , on $[0,T].$
\end{theo}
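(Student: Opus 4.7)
The plan is to follow the Gripenberg--Crandall--Evans strategy: treat (\ref{int_half-line-Yosida-Approx}) as a Volterra fixed-point equation for each $\la$, extract the Cauchy property by comparing two approximants $\ul,\um$ via the perturbed stability inequality (\ref{resolvent_t_stability}), and then verify the integral-solution inequality for the limit by passing to $\la\to 0$ pointwise. First, I rewrite (\ref{int_half-line-Yosida-Approx}) as the resolvent fixed-point equation
\[
\ul(t)=\Jlw(t)\fk{e^{-t/\la}u_0+\dl\int_0^t e^{-\tau/\la}\ul(t-\tau)\,d\tau},
\]
where $\Jlw(t)=(I-\la(A(t)+\om I))^{-1}$ is Lipschitz with constant $(1-\la\om)^{-1}$ by Assumption~\ref{general-IVA0} and the choice $0<\la<1/|\om|$. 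A contraction in the weighted norm $\sup_{t\in[0,T]}e^{-\theta t}\nrm{u(t)}$ with $\theta$ large produces a unique continuous $\ul$ on $[0,T]$; inserting any fixed $(x,y+\om x)\in A(0)+\om I$ into (\ref{resolvent_t_stability}) and iterating against the exponential kernel gives a uniform a priori bound $\nrm{\ul(t)-u_0}\le M(T)$.

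The Cauchy property as $\la,\mu\to 0$ is the core step. Set $z_\la(t)=(d/dt)_\la \ul(t)\in(A(t)+\om I)\ul(t)$ and $\varphi(t,s)=\nrm{\ul(t)-\um(s)}$. I apply (\ref{resolvent_t_stability}) with $x_1=\ul(t)$, $x_2=\um(s)$, $y_1+\om x_1=z_\la(t)$, $y_2+\om x_2=z_\mu(s)$, at times $t_1=t$, $t_2=s$, and with coupling $\alpha=\la\mu/\lpmplmw$. This coupling is chosen so that, after substituting the Yosida identity $\ul(t)-\la z_\la(t)=e^{-t/\la}u_0+\dl\int_0^t e^{-\tau/\la}\ul(t-\tau)\,d\tau$ and its analogue for $\um$, the left-hand side reorganizes into a positive-measure double convolution of $\varphi$, giving
\[
\varphi(t,s)\le \dint \dl\dm e^{-\tau/\la-\sigma/\mu}\varphi(t-\tau,s-\sigma)\,d\sigma\,d\tau+\Psi_{\la,\mu}(t,s),
\]
where $\Psi_{\la,\mu}$ collects the boundary contributions at $t=0$ or $s=0$ together with the moduli $\nrm{\hw(t)-\hw(s)}$, $\nrm{g(t)-g(s)}$, weighted by $\Lw(M)$ and the a priori bound on $z_\mu$. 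This double-exponential inequality is explicitly inverted by a modified Bessel $I_0$-kernel of the type encoded by the $\modBesselL{t}{s}$ macro, yielding a Gripenberg-type estimate
\[
\varphi(t,s)\le C\fk{\sqrt{\la+\mu}+\sup_{|\tau-\sigma|\le\sqrt{\la+\mu}}\bfk{\nrm{\hw(\tau)-\hw(\sigma)}+\nrm{g(\tau)-g(\sigma)}}}
\]
on $[0,T]^2$. Setting $s=t$ and invoking uniform continuity of $\hw$ together with Lipschitz continuity of $g$ gives $\sup_{[0,T]}\nrm{\ul(t)-\um(t)}\to 0$, so $u(t)=\lim_{\la\to 0}\ul(t)$ exists in $C([0,T],X)$.

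To show $u$ is an integral solution, fix $[x,y]\in A(r)+\om I$ with $0\le r\le\nu\le t\le T$ and apply (\ref{resolvent_t_stability}) with $x_1=\ul(\nu)$, $x_2=x$, $t_1=\nu$, $t_2=r$, $y_1+\om x_1=z_\la(\nu)$, $y_2+\om x_2=y$. Dividing by $\alpha$ and letting $\alpha\to 0^+$ converts this into
\[
[z_\la(\nu)-y,\ul(\nu)-x]_-\le \om\nrm{\ul(\nu)-x}+\Lw(\nrm{x})\nrm{\hw(\nu)-\hw(r)}+\nrm{y}\nrm{g(\nu)-g(r)}.
\]
A Benilan-type chain rule identifies $\frac{d}{d\nu}\nrm{\ul(\nu)-x}$ with $[z_\la(\nu),\ul(\nu)-x]_-$ almost everywhere, and subadditivity of $[\cdot,\cdot]_-$ in the first argument separates out $-[y,\ul(\nu)-x]_+$ on the left. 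Integrating from $r$ to $t$ produces the integral-solution inequality for $\ul$; uniform convergence $\ul\to u$ and lower-semicontinuity of $[y,\cdot]_+$ then pass the inequality to $u$.

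The main obstacle is the Cauchy step: selecting the coupling $\alpha$ so that (\ref{resolvent_t_stability}) closes into a bona fide two-variable Volterra inequality, controlling the growth factor $\Lw$ by the a priori bound, and extracting the explicit Bessel-kernel estimate uniformly on $[0,T]$ despite the multivalued, time-dependent nature of $A(t)+\om I$ and the merely Lipschitz regularity of $g$.
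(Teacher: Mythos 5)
Your overall architecture (resolvent fixed point, doubling of variables into a two-parameter Volterra inequality, inversion by the modified Bessel kernel, restriction to the diagonal, then limit passage in the dissipativity inequality) matches the paper's, but two steps are genuinely missing or wrong. First, under Assumption \ref{general-IVE2} the stability inequality (\ref{resolvent_t_stability}) carries the term $\la\nrm{g(t_1)-g(t_2)}\nrm{y_2}$ with $y_2=\fk{\frac{d}{dt}}_{\mu}\um(s)$, and your Cauchy step silently invokes ``the a priori bound on $z_\mu$''. That bound is not free: it is precisely the equi-Lipschitz statement of Lemma \ref{IVE2-half-line-boundedness-lemma}(2), whose proof is the bulk of the technical work (the help functions $K_{\la}$, $K_{\la}^{\sup}$, their measurability, a Gronwall iteration, and the restriction $0<\la\le c<1/(\btr{\om}+L_g)$); that lemma also supplies the sharper estimate $\nrm{\ul(t)-u_0}\le K^{\prime}(t+\la)$, which is what makes the boundary terms of the Bessel resolvent vanish --- a crude uniform bound $M(T)$ is not enough there. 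Without these inputs your asserted pointwise diagonal estimate with modulus $\sqrt{\la+\mu}$ is unsupported; the paper in fact never proves a pointwise diagonal bound, but shows $\int_0^T\nrm{\ul(t)-\um(t)}\,dt\to 0$ and upgrades to uniform convergence via equicontinuity (Lemma \ref{IVA0-IVE2-CauchyConvergence-lem} together with the appendix lemma on equicontinuous $L^1$-convergent families).

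Second, the verification that the limit is an integral solution cannot go through the chain rule you propose. The quantity $z_\la(\nu)=\fk{\frac{d}{dt}}_{\la}\ul(\nu)$ is the nonlocal Yosida approximation of the derivative, not $\ul^{\prime}(\nu)$, so identifying $\frac{d}{d\nu}\nrm{\ul(\nu)-x}$ with $[z_\la(\nu),\ul(\nu)-x]_-$ a.e.\ is false (and $\ul$ need not be differentiable anywhere in a general Banach space). The paper instead keeps the integral structure: it multiplies the bracket inequality by $\la$, rearranges it into a one-variable Volterra inequality in $t$, inverts it with Proposition \ref{ineq-finite-integral}, and then shows that the residual terms --- the exponential tail $\frac{1}{\la}\int_{t-r}^{\infty}$, the factor $(1-e^{-(t-r)/\la})$ in front of $\nrm{\ul(r)-x}$, and the double integral measuring the oscillation of $\ul$ near $r$ --- vanish as $\la\to 0+$ using the boundedness and equicontinuity of the family $\bk{\ul}$. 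Your chain-rule paragraph would have to be replaced by this (or an equivalent) argument.
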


\begin{defi}
For $A\subset X\times X,$  dissipative and $\om\in\re$ we define for $\la >0, \btr{\om} < 1/\la$, and $ x\in R(I-\la(A+\om))$:
$$J_{\la}^{\om}x:=(I-\la(A+\om))^{-1}x, \mbox{ and } A_{\la}^{\om}=\frac{1}{\la}(J_{\la}^{\om}-I,)$$
$$ \btr{Ax}:=\lim_{\la\to 0}\nrm{A_{\la}x},$$
and
$$
\hat{D^{\om}}:=\bk{x:\lim_{\la\to 0+}\nrm{A_{\la}^{\om}x}<\infty}.
$$
In the case $\om=0,$ we write $\hat{D^{\om}}=\hat{D}.$
\end{defi}

For operators $A+\om I$ $A$ dissipative we have: 
\begin{remk} \label{perturbed_control}
\begin{enumerate}
\item $\Jlw(t)x=J_{\frac{\la}{1-\la\om}}(t)(\frac{1}{1-\la\om}x).$
\item For all $\om\in\re,$ $\btr{(A+\om I)x}\le\btr{Ax}+\btr{\om}\nrm{x}.$ Consequently , $\hat{D}^{\om}=\hat{D}.$
\item Let Assumption \ref{general-IVA0} and Assumption \ref{general-IVE2} be fulfilled, then using that for  $x\in X $, 
$$
(\Jlw(t)x,A_{\la}^{\om}(t)x)\in A(t)+ \om I
$$ 
the inequality (\ref{resolvent_t_stability}) leads to: 
\begin{eqnarray*}
\lefteqn{\nrm{\Jlw(t)x-\Jlw(s)x}} \\
&\le&\frac{\la}{1-\la\om}\nrm{h^{\om}(t)-h^{\om}(s)}L^{\om}(\nrm{\Jlw(s)x}) \\
&&+\frac{\la}{1-\la\om}\nrm{g(t)-g(s)}\nrm{A_{\la}^{\om}(s)x}.
\end{eqnarray*}
\label{perturbed_control_Jlw}
\item In case of Assumption \ref{general-IVE1}, we have the same inequality, with $g=0,$ and $L^{\om}=L.$
\end{enumerate}
\end{remk}

\begin{remk}\label{hat_D_constant}
The boundedness of the control functions, and  $x\in \hat{D}(A(t_0),$ implies $x\in \hat{D}(A(s),$ for all $s\in I.$ Moreover, due to Assumption \ref{general-IVE1}, and Assumption \ref{general-IVE2}, we have,

$$
\nrm{A_{\la}(t_0)x-A_{\la}(s)x} \le 2\nri{h} L(\nrm{x}),
$$
or
$$
\nrm{A_{\la}(t_0)x-A_{\la}(s)x} \le 2\nri{h} L(\nrm{x})+2\nri{g}\nrm{A_{\la}(t_0)x}),
$$
for all $s \in I, \la >0.$ This leads to $\hat{D}(A(t))=\hat{D},$  and the integral solutions found in Theorems \ref{IVA0-IVE2-CauchyConvergence}, \ref{IVA0-IVE1-CauchyConvergence},\ref{Cauchy-solutions}, will satisfy $\bk{u(t):t\in I}\subset \overline{D}, $ applying that m-dissipativeness implies $D(A(t))\subset \hat{D}(A(t)) \subset \overline{D(A(t))}.$ Use \cite[Lemma 2.4]{Evans}.
\end{remk}

The same we obtain for the Assumption \ref{general-IVE1}:
\begin{theo} \label{IVA0-IVE1-CauchyConvergence}
Let $I=\rep,$ $u_0\in \hat{D},$ $T>0,$ Assumption \ref{general-IVA0} and Assumption \ref{general-IVE1} be fulfilled, then
$\ul$ is locally uniformly Cauchy in $\la \to 0+.$ The limit $u(t):=\lim_{\la\to 0}\ul(t),$ is an integral solution to (\ref{int_Half-Line-Eq}) on $[0,T].$
\end{theo}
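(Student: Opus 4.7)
The plan is to mirror the proof of Theorem \ref{IVA0-IVE2-CauchyConvergence}, specialising to the case $g \equiv 0$ and observing that once the $g$-term disappears, the Lipschitz hypothesis on $h$ can be relaxed to mere uniform continuity. By Remark \ref{perturbed_control}(4) the perturbed controls reduce to $h^{\om} = h$ (up to the trivial second coordinate) and $L^{\om} = L$, and the resolvent stability inequality (\ref{resolvent_t_stability}) becomes
\begin{equation*}
(1 - \la \om) \nrm{x_1 - x_2} \le \nrm{x_1 - x_2 - \la(y_1 + \om x_1 - y_2 - \om x_2)} + \la \nrm{h(t_1) - h(t_2)} L^{\om}(\nrm{x_2}).
\end{equation*}
Since $u_0 \in \hat D = \hat D^{\om}$ by Remark \ref{perturbed_control}(2), the a priori bound on $u_\la$ established in the proof of Theorem \ref{IVA0-IVE2-CauchyConvergence} applies verbatim, keeping $L(\nrm{u_\la(\cdot)})$ uniformly bounded on $[0, T]$.

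Next I would reproduce the comparison of $u_\la$ and $u_\mu$ from the proof of Theorem \ref{IVA0-IVE2-CauchyConvergence}. With $g = 0$, Remark \ref{perturbed_control}(3) simplifies to
$$\nrm{\Jlw(t) x - \Jlw(s) x} \le \frac{\la}{1 - \la \om} \nrm{h(t) - h(s)}\, L(\nrm{\Jlw(s) x}),$$
so the accumulated error in the two-parameter difference $u_\la - u_\mu$ is driven only by integrals of $\nrm{h(\nu) - h(\sigma)}$ evaluated at pairs of points that coalesce as $\la, \mu \to 0^+$. Uniform continuity of $h$ on $[0, T]$ forces these integrals to zero, giving local uniform Cauchyness. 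Passing $\la \to 0^+$ in the discrete difference inequality that $u_\la$ satisfies then yields the integral-solution inequality of the definition preceding Theorem \ref{IVA0-IVE2-CauchyConvergence}, with its $g$-term absent.

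The main obstacle is precisely the relaxation from Lipschitz to uniform continuity of $h$. In the proof of Theorem \ref{IVA0-IVE2-CauchyConvergence} the Lipschitz assumption is genuinely needed for the $g$-term, which multiplies $\nrm{A_\la^{\om}(s) x}$; no pointwise $\la$-bound is available there, so only a Lipschitz bound closes the Gronwall-type estimate. With $g \equiv 0$ that term disappears entirely, and the only surviving modulus factor multiplies the orbit-controlled quantity $L(\nrm{x_2})$. Hence a qualitative uniform modulus of continuity for $h$ suffices, the argument of Theorem \ref{IVA0-IVE2-CauchyConvergence} carries through with only cosmetic changes, and the limit $u(t) := \lim_{\la \to 0^+} u_\la(t)$ is the desired integral solution of (\ref{int_Half-Line-Eq}) on $[0, T]$.
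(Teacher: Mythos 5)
Your overall strategy coincides with the paper's: the Cauchy part is obtained by rerunning the Assumption \ref{general-IVE2} argument with $g\equiv 0$ (so the term multiplying $\nrm{A_{\la}^{\om}(s)x}$, which is the only place Lipschitz continuity is genuinely forced, disappears), and the integral-solution part follows by passing to the limit in the approximate inclusion exactly as for Theorem \ref{IVA0-IVE2-CauchyConvergence}. The boundedness and equicontinuity inputs you invoke are supplied in the paper by Corollary \ref{IVE1-half-line-boundeness-cor}.

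However, there is a gap at the one step you yourself identify as the crux: why mere uniform continuity of $h$ suffices. Your justification --- that the surviving error terms are ``integrals of $\nrm{h(\nu)-h(\sigma)}$ evaluated at pairs of points that coalesce as $\la,\mu\to 0^+$'' --- is not what happens. After inserting the resolvent representation (Lemma \ref{main-ineq} / Lemma \ref{general-2-dim-resolvent}), the control-function contribution is
$$
\frac{\mu\la}{\Lambda^3}\int_0^T\int_0^T \IO{xy}\exp\fk{ \mlmw x + \llmw y}\,\nrm{h(t-x)-h(t-y)}\,dy\,dx,
$$
and the arguments $t-x$, $t-y$ range over all of $[0,T]^2$ independently of $\la,\mu$; they do not coalesce. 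What makes the term small is a property of the \emph{kernel}: its mass concentrates near the diagonal $x=y$ as $\la,\mu\to 0$, quantified by the Bessel-integral estimates of Lemma \ref{SomeIntegrals} (items (\ref{bd_lipschitz_bessel_convergence}), (\ref{bd_bessel}), (\ref{bd_bessel_convergence})). The paper closes this step by mollification: approximate $h$ uniformly by Lipschitz functions $h_{\ep}$, use the uniform boundedness of the total kernel mass to absorb $\nri{h-h_{\ep}}$, and then apply the already-established estimate
$\frac{\mu\la}{\Lambda^3}\int_0^T\int_0^T \IO{xy}\exp\fk{ \mlmw x + \llmw y}\btr{x-y}\,dy\,dx\to 0$
to $h_{\ep}$. (One could alternatively split the domain into $\btr{x-y}\le\delta$ and $\btr{x-y}>\delta$ and use a Chebyshev bound on the off-diagonal mass, but either way the quantitative kernel estimate is the essential ingredient.) Your proposal asserts the conclusion without supplying this mechanism, so as written the step ``uniform continuity of $h$ forces these integrals to zero'' does not follow.
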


In order to solve the whole line problem (\ref{int_whole-Line-Eq}), we have $I=\re,$ and the following definition for the integral solution.
\begin{defi}
Let $I=\re,$ and assume that either the Assumption \ref{general-IVE1} or Assumption \ref{general-IVE2} is satisfied for the family $\bk{A(t):t\in I}.$ 
A continuous function 
$u:\re \to X$ is called an integral solution on $\re$ to (\ref{int_whole-Line-Eq}), if and only if 
\begin{eqnarray*} \label{line-integral-sol-ineq}
\nrm{u(t)-x}-\nrm{u(r)-x}&\le &\int_r^t\fk{[y,u(\nu)-x]_{+}+\om\nrm{u(\nu)-x}}d\nu \\
&&+\Lw(\nrm{x})\int_r^t\nrm{\hw(\nu)-\hw(r)}d\nu +\nrm{y}\int_r^t\nrm{g(\nu)-g(r)}d\nu
\end{eqnarray*}
for all $-\infty < r\le t <\infty ,$ and $[x,y]\in A(r)+\om I.$
\end{defi}

Similar to the initial value case we consider the Yosida-Approximation of the derivative. This leads to
\begin{equation} \label{int_line-Yosida-Approx}
\fk{\frac{d}{dt}}_{\la}u_{\la}(t) \in A(t)u_{\la}(t)+\om \ul(t), t \in \re,
\end{equation}
with
\begin{equation} \label{defii-whole-line-Yosida-Approx}
\fk{\frac{d}{dt}}_{\la}u(t):=\dl\fk{ u(t)-\dl\intO \el{s}u(t-s)ds}.
\end{equation}

For these approximants we have,

\begin{theo} \label{Cauchy-solutions}

Let $I=\re,$ then we have:
\begin{enumerate}
\item If Assumption \ref{general-IVA0}, Assumption \ref{general-IVE1}  and $\om < 0,$ 
are fulfilled, then the Yosida approximants $\fk{u_{\la}:\la >0}$ 
 are Cauchy in \bucrx when $\la \to 0+.$ The limit 
$$u(t):=\lim_{\la\to 0+}\ul(t)$$ 
is an integral solution to (\ref{int_whole-Line-Eq}) on $\re.$
\item If Assumption \ref{general-IVA0} and Assumption \ref{general-IVE2} is fullfilled,
further assume that the Lipschitz constant $L_g$ of $g$ in Assumption \ref{general-IVE2}
is less than $-\om,$ then the Yosida approximants $\fk{u_{\la}:\la >0}$  
are Cauchy in \bucrx when $\la \to 0+ $. The limit 
$$u(t):=\lim_{\la\to 0+}\ul(t)$$ 
is an integral solution to (\ref{int_whole-Line-Eq}) on $\re.$
\end{enumerate}
\end{theo}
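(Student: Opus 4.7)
The plan is three-stage: recast (\ref{int_line-Yosida-Approx}) as a fixed-point equation on \bucrx and derive $\la$-uniform bounds; prove a Cauchy estimate via a two-parameter comparison of Crandall--Evans--Gripenberg type; and identify the limit as an integral solution.

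\textbf{Fixed point and bounds.} With $(G_\la u)(t):=\dl\intO\el{\tau}u(t-\tau)\,d\tau$, the identity $\fk{\frac{d}{dt}}_\la u=\dl(u-G_\la u)$ rewrites (\ref{int_line-Yosida-Approx}) as the fixed-point equation $\ul(t)=\Jlw(t)(G_\la\ul)(t)=:(T_\la\ul)(t)$. Since $\om<0$ in both cases---explicit in~(1), forced by $L_g<-\om$ in~(2)---Remark \ref{perturbed_control}(\ref{perturbed_control_Jlw}) gives $\Jlw(t)$ Lipschitz with constant $(1-\la\om)^{-1}<1$. Combined with the non-expansiveness of $G_\la$ on \bucrx and the time-regularity of $\Jlw(\cdot)$ (which keep the range of $T_\la$ inside \bucrx), $T_\la$ is a strict contraction and has a unique fixed point $\ul$. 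Writing $y_\la(t):=\fk{\frac{d}{dt}}_\la\ul(t)\in A(t)\ul(t)+\om\ul(t)$ and testing (\ref{resolvent_t_stability}) between $(\ul(t),y_\la(t))$ and $(\Jlw(t_0)x_0,A_\la^\om(t_0)x_0+\om\Jlw(t_0)x_0)$ for a fixed $x_0\in\hat{D}$ yields $\la$-uniform bounds $\nri{\ul}\le C_1$ and $\nri{y_\la}\le C_2$; the same tool applied at two distinct times, absorbed via $\om<0$, provides a $\la$-uniform Lipschitz constant $M$ for $\ul$.

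\textbf{Cauchy estimate.} For $0<\la,\mu<1/|\om|$ apply (\ref{resolvent_t_stability}) at $t_1=t$, $t_2=s$ to the pairs $(\ul(t),y_\la(t))$ and $(\um(s),y_\mu(s))\in A+\om I$ with a comparison parameter $\alpha$ of order $\la\mu/\lpmplmw$. Using the identity $u-\alpha y_\la(t)=(1-\alpha/\la)u(t)+(\alpha/\la)(G_\la u)(t)$ with $u=\ul$, and the analogue for $\um$ and $\mu$, turns the right-hand side of (\ref{resolvent_t_stability}) into a Poisson-weighted mean of $\ul(t-\tau)-\um(s-\sigma)$; for $\Phi(t,s):=\nrm{\ul(t)-\um(s)}$ this yields a two-parameter Volterra inequality of Crandall--Liggett type whose iteration majorises $\Phi$ by an expression involving the modified-Bessel kernel $\modBesselL{t}{s}$. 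Restricted to the diagonal $t=s$, the majorant vanishes as $\la,\mu\to 0+$ provided the net dissipation strictly dominates the Lipschitz residue inherited from (\ref{resolvent_t_stability}). In case~(1) the $g$-contribution is absent and $\om<0$ alone suffices; in case~(2), the $\nrm{g(t)-g(s)}\nrm{y_\mu(s)}$ correction produces, after Poisson convolution and the Lipschitz bound on $g$, a term of the form $L_g\Phi$ up to a vanishing remainder, and the strict inequality $L_g<-\om$ absorbs it. Organising this $g$-absorption---so that precisely a factor $L_g$ survives on the right and the effective dissipation becomes $-(\om+L_g)>0$---is the main technical obstacle.

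\textbf{Integral solution.} For fixed $[x,y]\in A(r)+\om I$, apply (\ref{resolvent_t_stability}) at $t_1=\nu\in[r,t]$, $t_2=r$ with comparison parameter $\rho\to 0+$ to $(\ul(\nu),y_\la(\nu))$ and $(x,y)$; in the limit the norm difference converts into the bracket $[y-y_\la(\nu),\ul(\nu)-x]_-$. Integrating over $[r,t]$ and using $y_\la=\fk{\frac{d}{dt}}_\la\ul$ to telescope $\nrm{\ul(t)-x}-\nrm{\ul(r)-x}$ modulo an $o(1)$ Poisson-kernel error, then letting $\la\to 0+$ via the uniform convergence $\ul\to u$ from the previous step and the semicontinuity of $[\cdot,\cdot]_+$, delivers the defining integral inequality for an integral solution of (\ref{int_whole-Line-Eq}) on $\re$.
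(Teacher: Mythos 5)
Your proposal follows the same route as the paper's: the fixed-point construction of $\ul$ on $BUC(\re,X)$, $\la$-uniform bounds obtained by testing against a fixed element of $\hat{D}$, the two-parameter Crandall--Evans type inequality resolved by the modified-Bessel kernel and restricted to the diagonal, and the limit passage to the integral-solution inequality using equicontinuity to kill the Poisson-kernel error terms. Two points in case (1) need repair, however. First, under Assumption \ref{general-IVE1} the control function $h$ is only uniformly continuous, so the family $\bk{\ul:\la>0}$ is equicontinuous but in general has no common Lipschitz constant $M$ as you assert; fortunately equicontinuity is all that the error terms in the integral-solution step require, and the paper's proof of item (1) uses exactly that. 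Second, and more substantively, your claim that in case (1) ``$\om<0$ alone suffices'' for the Cauchy estimate skips a necessary step: after applying the resolvent, the diagonal majorant contains the factor $\nrm{h(t-x)-h(t-y)}$, and the Bessel-kernel lemma (Lemma \ref{SomeIntegrals}) only forces the corresponding integral to vanish when this factor is dominated by $\btr{x-y}$, i.e.\ when $h$ is Lipschitz. One must first replace $h$ by a mollified Lipschitz $h_{\ep}$, use that the total mass of the kernel is bounded by $1/\om$ uniformly in $\la,\mu$ to control the error $\nrm{h-h_{\ep}}_{\infty}$, and only then invoke the $\btr{x-y}$ estimate; this is how the paper closes the argument. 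With these two corrections your outline coincides with the paper's proof.
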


In this section we consider evolution equations of the form,
\begin{equation} \label{whole-line-inhomogeneous-equation}
u^{\prime}(t)\in A(t)u(t)+\om u(t)+f(t), t\in \re,
\end{equation}
and 
\begin{equation} \label{half-line-inhomogeneous-equation}
u^{\prime}(t)\in A(t)u(t)+\om u(t)+f(t), u(0)=u_0, t\in \rep.
\end{equation}
Further we will show using the Yosida approximation, that the obtained solutions are integral solutions as introduced by Ph. Benilan \cite{Benilan}, \cite[Definition 6.29, p.232]{Ito_Kappel}. From the mathematical viewpoint these results came out when bringing similar methods as used in the homogeneous case to bear. Consider the time dependent operator
$$
B(t)=A(t)+f(t).
$$

\begin{defi} \label{def-int-sol-inhomo}
Assume that either the Assumption \ref{general-IVE1} or Assumption \ref{general-IVE2} is satisfied for the family $\bk{A(t):t\in I}.$ In the case of Assumption \ref{general-IVE1} choose $g=0.$
\begin{enumerate}
\item Let $I=\rep.$ A continuous function 
$u:[a,b]\to X$ is called an integral solution of (\ref{half-line-inhomogeneous-equation}) if $u(a)=x_a$ and
\begin{eqnarray*} 
\nrm{u(t)-x}-\nrm{u(r)-x}&\le &\int_r^t\fk{[y+f(\nu),u(\nu)-x]_{+}+\om\nrm{u(\nu)-x}}d\nu \\
&&+\Lw(\nrm{x})\int_r^t\nrm{\hw(\nu)-\hw(r)}d\nu +\nrm{y}\int_r^t\nrm{g(\nu)-g(r)}d\nu
\end{eqnarray*}
for all $a\le r\le t\le b,$ and $s\in [a,b],$ $[x,y]\in A(s)+\om I.$
\item Let $I=\re.$ A continuous function 
$u:\re \to X$ is called an integral solution of (\ref{whole-line-inhomogeneous-equation}) on $\re$ if  
\begin{eqnarray*} 
\nrm{u(t)-x}-\nrm{u(r)-x}&\le &\int_r^t\fk{[y+f(\nu),u(\nu)-x]_{+}+\om\nrm{u(\nu)-x}}d\nu \\
&&+\Lw(\nrm{x})\int_r^t\nrm{\hw(\nu)-\hw(r)}d\nu +\nrm{y}\int_r^t\nrm{g(\nu)-g(r)}d\nu
\end{eqnarray*}
for all $-\infty < r\le t <\infty ,$ and $s \in \re,$ $[x,y]\in A(s)+\om I.$
\end{enumerate}
\end{defi}

Similar to the homogeneous case we obtain the following result.

\begin{theo} \label{buc_int_sol}
\begin{enumerate}
\item Let $I=\re,$ $f\in BUC(\re,X),$  and $A(t)$ fulfill Assumption \ref{general-IVA0} and either Assumption \ref{general-IVE1} with $\om<0,$  or  Assumption \ref{general-IVE2} , with $L_g$ the Lipschitz constant of $g,$ $0\le L_g<-\om,$ then equation (\ref{whole-line-inhomogeneous-equation}) has an integral solution on $\re.$  \label{buc_int_sol_re}
\item Let $I=\rep,$ $u_0\in \overline{D},$ $f\in BUC(\re,X),$ and $A(t)$ fulfill Assumption \ref{general-IVA0} and either Assumption \ref{general-IVE1} with $\om <0$, or  Assumption \ref{general-IVE2} , with $L_g$ the Lipschitz constant of $g,$ $0\le L_g<-\om,$ then equation (\ref{half-line-inhomogeneous-equation}) has an integral solution on $\rep.$ \label{buc_int_sol_rep}
\end{enumerate}
\end{theo}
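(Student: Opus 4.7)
The plan is to reduce both parts to the homogeneous theorems already proved in this section by treating $B(t):=A(t)+f(t)$ as a new family of time-dependent operators on the same domains, and then translating the resulting homogeneous integral inequality into the inhomogeneous form of Definition \ref{def-int-sol-inhomo}. First I would verify that $B(t)$ still satisfies Assumption \ref{general-IVA0}: dissipativity is immediate because shifting both pairs by the same vector $f(t_i)$ leaves the differences inside $[\cdot,\cdot]_-$ unchanged, and m-dissipativity follows from $R(I-\la B(t))=R(I-\la A(t))-\la f(t)=X$.

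Next I would transfer Assumptions \ref{general-IVE1} and \ref{general-IVE2} to $B(t)$. Given $[x_i,\tilde y_i]\in B(t_i)$ write $\tilde y_i=y_i+f(t_i)$ with $y_i\in A(t_i)x_i$; the triangle inequality
$$\nrm{x_1-x_2-\la(y_1-y_2)}\le\nrm{x_1-x_2-\la(\tilde y_1-\tilde y_2)}+\la\nrm{f(t_1)-f(t_2)}$$
lets me substitute into the original assumption and obtain the same structural inequality for $B(t)$ with new controls: $\tilde h(t):=(h(t),f(t))$ and $\tilde L(r):=L(r)+1$ in the case of Assumption \ref{general-IVE1}, and additionally $\tilde g:=g$ (same Lipschitz constant $L_g$) together with an extra coordinate $\nri{f}g(t)$ appended to $\tilde h$ in the case of Assumption \ref{general-IVE2}, which absorbs the residual term $\la\nrm{g(t_1)-g(t_2)}\nrm{f(t_2)}\le\la\nri{f}\nrm{g(t_1)-g(t_2)}$. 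Because $f\in BUC$, the new $\tilde h$ remains bounded and uniformly continuous, and $L_{\tilde g}=L_g<-\om$ is preserved.

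With Assumption \ref{general-IVA0} and the appropriate structural assumption verified for $B(t)$, I can invoke Theorem \ref{Cauchy-solutions} for part (\ref{buc_int_sol_re}) and Theorems \ref{IVA0-IVE1-CauchyConvergence}/\ref{IVA0-IVE2-CauchyConvergence} for part (\ref{buc_int_sol_rep}) applied to $B(t)$. These produce a Cauchy family of Yosida approximants whose uniform limit $u$ is an integral solution of $u'(t)\in B(t)u(t)+\om u(t)$ in the homogeneous sense of Section 2. To turn this into Definition \ref{def-int-sol-inhomo}, fix $[x,y]\in A(r)+\om I$ and set $\tilde y:=y+f(r)$, so that $[x,\tilde y]\in B(r)+\om I$; the homogeneous inequality contributes $[\tilde y,u(\nu)-x]_+=[y+f(r),u(\nu)-x]_+$ under the integral. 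The elementary bracket estimate $[a,z]_+\le[b,z]_++\nrm{a-b}$ upgrades $f(r)$ inside the bracket to $f(\nu)$ at the cost of $\nrm{f(r)-f(\nu)}$, a cost that is precisely paid by the $f$-coordinate of $\tilde h^\om$; similarly $\nrm{\tilde y}\le\nrm{y}+\nri{f}$ splits the $\tilde g$-term into the $\nrm{y}$-part required by Definition \ref{def-int-sol-inhomo} plus a remainder absorbed by the same enlarged $\tilde h^\om$-integral.

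The main obstacle is case \ref{general-IVE2} under mere $BUC$ regularity of $f$: Assumption \ref{general-IVE2} literally asks $h$ to be Lipschitz, whereas the coordinate $f$ entering $\tilde h$ is only $BUC$. I would handle this either by tracking the proof of Theorem \ref{Cauchy-solutions} to confirm that the $h$-component enters only through the integral $\int\nrm{\hw(\nu)-\hw(r)}d\nu$, for which $BUC$ suffices (Lipschitz regularity being genuinely used only for $g$ in combination with $L_g<-\om$), or, failing that, by approximating $f$ by Lipschitz functions $f_n$ with $\nri{f-f_n}\to 0$, constructing the associated integral solutions $u_n$, and passing to the limit via the contraction estimate built into Definition \ref{def-int-sol-inhomo}, which yields $\nri{u_n-u_m}\le \nri{f_n-f_m}/|\om|$ under the standing dissipativity and $L_g<-\om$ hypotheses.
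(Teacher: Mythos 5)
Your proposal is correct and follows essentially the same route as the paper: the paper likewise sets $B(t)=\bk{[x,y+f(t)]:[x,y]\in A(t)}$, verifies in Proposition \ref{B(t)+f(t)E2)} that the structural assumptions transfer (with the extra control term $\la\nrm{f(t_1)-f(t_2)}$ absorbed exactly as you describe), invokes the homogeneous existence machinery, and handles the fact that Assumption \ref{general-IVE2} needs Lipschitz controls by mollifying $f$ to $f_\ep$ and passing to the limit via the stability estimate of Proposition \ref{inhomgeneous-stability} --- which is precisely your fallback option (b). The only cosmetic difference is that the paper rederives the integral-solution inequality keeping $f_\ep(t)$ explicit inside the bracket rather than post-processing the homogeneous inequality with the subadditivity $[a+b,z]_+\le [a,z]_+ +\nrm{b}$, but both yield Definition \ref{def-int-sol-inhomo}.
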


Before we can pass to more general right hand sides we have to prove a few stability estimates.

We consider the stability inequality for solutions on $\re.$

\begin{pro} \label{inhomgeneous-stability}
Let $I=\re,$ and  $A(t)$ fulfill Assumption \ref{general-IVA0}
\begin{enumerate}
\item  Let $\om <0,$ and Assumption \ref{general-IVE1} be fulfilled. For given right hand sides $f_1, f_2 \in BUC(\re,X),$ $u_1,u_2$ the corresponding integral solutions on $\re,$
we have
\begin{eqnarray*}
\nrm{u_1(t)-u_2(t)} &\le& \int_0^{\infty}\exp(\om r)\nrm{f_1(t-r)-f_2(t-r)}dr.
\end{eqnarray*}
\item Let $0\le L_g<-\om,$ $L_g$ the Lipschitz constant of $g,$ and  Assumption \ref{general-IVE2} be fulfilled. For given right hand sides  $f_1, f_2$ bounded and Lipschitz, and $u_1,u_2$ the corresponding integral solutions on $\re,$
then
\begin{eqnarray*}
\nrm{u_1(t)-u_2(t)} &\le& \int_0^{\infty}\exp(\om r)\nrm{f_1(t-r)-f_2(t-r)}dr.
\end{eqnarray*}
\end{enumerate}
\end{pro}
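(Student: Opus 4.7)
The plan is to reduce the inequality to the level of the Yosida approximants $u_\lambda^{(1)},u_\lambda^{(2)}$ of Theorem \ref{buc_int_sol}, use the contractive behaviour of the resolvent $J_\lambda^\omega(t)$, and then compare with the explicit solution of a scalar model problem. Both parts of the statement rely only on $\omega<0$ (explicit in (1), forced by $0\le L_g<-\omega$ in (2)); the $h$- and $g$-terms in \eqref{resolvent_t_stability} never enter because we evaluate everything at the same time $t$. So we treat both cases with a single argument.

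First I would rewrite the Yosida equation \eqref{int_line-Yosida-Approx} (with $f_i$ added) as a fixed-point identity for the resolvent $J_\lambda^\omega(t)=(I-\lambda(A(t)+\omega I))^{-1}$. From the definition \eqref{defii-whole-line-Yosida-Approx} of $(d/dt)_\lambda$, the inclusion $(d/dt)_\lambda u_\lambda^{(i)}(t)\in A(t)u_\lambda^{(i)}(t)+\omega u_\lambda^{(i)}(t)+f_i(t)$ is equivalent to
$$
u_\lambda^{(i)}(t)=J_\lambda^\omega(t)\Bigl(\dl\intO e^{-s/\la}u_\lambda^{(i)}(t-s)\,ds+\lambda f_i(t)\Bigr).
$$
Remark \ref{perturbed_control}(1) gives $\|J_\lambda^\omega(t)v_1-J_\lambda^\omega(t)v_2\|\le(1-\lambda\omega)^{-1}\|v_1-v_2\|$, so subtracting the two fixed-point identities yields, with $w_\la(t):=\|u_\la^{(1)}(t)-u_\la^{(2)}(t)\|$ and $\phi(t):=\|f_1(t)-f_2(t)\|$,
$$
w_\la(t)\le\frac{1}{\la(1-\la\om)}\intO e^{-s/\la}w_\la(t-s)\,ds+\frac{\la}{1-\la\om}\phi(t).
$$

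Second, I introduce the scalar function $v_\la(t):=\intO k_\la(r)\phi(t-r)\,dr$, where $k_\la$ is the unique bounded kernel for which $v_\la$ solves the corresponding equality. This is nothing but the Yosida approximation of the scalar ODE $v'=\omega v+\phi$ on $\re$, whose bounded solution is $v(t)=\intO e^{\om r}\phi(t-r)\,dr$. A direct Laplace computation gives $k_\la(r)=(1-\la\om)^{-1}\exp\bigl(\frac{\om}{1-\la\om}r\bigr)$ up to a vanishing transient; in particular $k_\la\to e^{\om\cdot}$ in $L^1(\rep)$ and uniformly on compacts as $\la\to 0+$, using $\om<0$. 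Hence $v_\la\to v$ uniformly on $\re$ whenever $\phi\in BUC(\re,\re)$, which is our setting since $f_1,f_2\in BUC(\re,X)$.

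Third, a comparison step. Set $d_\la(t):=w_\la(t)-v_\la(t)$; subtracting the inequality for $w_\la$ from the equality for $v_\la$ gives
$$
d_\la(t)\le\frac{1}{\la(1-\la\om)}\intO e^{-s/\la}d_\la(t-s)\,ds.
$$
Both $w_\la$ and $v_\la$ are bounded on $\re$, so $M:=\sup_{t\in\re}d_\la(t)<\infty$. Taking the supremum and using $\dl\intO e^{-s/\la}ds=1$ yields $M\le M/(1-\la\om)$, which forces $M\le 0$ since $1-\la\om>1$. Therefore $w_\la(t)\le v_\la(t)$ for every $t\in\re$. Letting $\la\to 0+$, Theorem \ref{buc_int_sol} gives $u_\la^{(i)}\to u_i$ uniformly on $\re$, while $v_\la\to v$ uniformly, and the stated inequality follows.

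The main obstacle is the kernel comparison in the middle paragraph: all the operator-theoretic work collapses once the problem is reduced to the scalar Yosida equation, but one has to check carefully that the explicit bounded solution $v_\la$ exists, equals the stated convolution, and converges to $v$. The rest is bookkeeping once the two Yosida representations are lined up.
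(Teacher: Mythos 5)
Your proof is correct and takes essentially the same route as the paper: the same resolvent fixed-point identity, the contraction bound $\nrm{J_{\la}^{\om}(t)v_1-J_{\la}^{\om}(t)v_2}\le (1-\la\om)^{-1}\nrm{v_1-v_2}$ from Remark \ref{perturbed_control}(1), the resulting scalar convolution inequality for $\nrm{u_{1,\la}(t)-u_{2,\la}(t)}$, and passage to the limit via uniform convergence of the approximants. Your middle comparison step is just a hands-on re-derivation of Proposition \ref{integral-ineq.} (positivity of the scalar resolvent), which the paper cites directly; the only blemish is your kernel constant, which should be $(1-\la\om)^{-2}$ rather than $(1-\la\om)^{-1}$ (plus the point mass $\frac{\la}{1-\la\om}\delta_0$ you correctly dismiss), both immaterial as $\la\to 0+$.
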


\begin{pro} \label{half-line-inhomgeneous-stability}
Let $I=\rep,$ and $A(t)$ fulfill Assumption \ref{general-IVA0}.
\begin{enumerate}
\item If Assumption \ref{general-IVE1} is fulfilled, for given right hand sides, $f_1, f_2 \in BUC(\re,X),$ and $u_1,u_2$ the corresponding solutions on $\rep$ with the initial values $x_1,x_2,$
\begin{eqnarray*}
\nrm{u_1(t)-u_2(t)} &\le&\exp(\om t)\nrm{x_1-x_2} +\int_0^t\exp(\om (t-\tau))\nrm{f_1(\tau)-f_2(\tau)}d\tau
\end{eqnarray*}
for all $t>0.$
\item If Assumption \ref{general-IVE2} is fulfilled, for given right hand sides, $f_1, f_2$ bounded and Lipschitz, and $u_1,u_2$ the corresponding solutions on $\rep,$ with the initial values $x_1,x_2,$
then
\begin{eqnarray*}
\nrm{u_1(t)-u_2(t)} &\le&\exp(\om t)\nrm{x_1-x_2} +\int_0^t\exp(\om (t-\tau))\nrm{f_1(\tau)-f_2(\tau)}d\tau .
\end{eqnarray*}
\end{enumerate}
\end{pro}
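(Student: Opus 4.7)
The plan is to derive the estimate at the level of the Yosida approximants $u_{i,\la}$ associated with $(x_i,f_i)$ for $i=1,2$, reduce the matter to a scalar Volterra inequality for $\phi_\la(t) := \nrm{u_{1,\la}(t)-u_{2,\la}(t)}$, solve that inequality by a Gronwall argument, and pass to $\la \to 0+$ using the locally uniform convergence $u_{i,\la}\to u_i$ provided by the existence part of Theorem \ref{buc_int_sol}. Evaluating the definition (\ref{defi-Yosida-Approx}) of $\fk{\frac{d}{dt}}_{\la}$ gives
$$u_{i,\la}(t) - \la \fk{\frac{d}{dt}}_{\la} u_{i,\la}(t) = x_i\, e^{-t/\la} + \frac{1}{\la}\int_0^t e^{-s/\la} u_{i,\la}(t-s)\,ds,$$
so that the approximate equation (\ref{int_half-line-Yosida-Approx}) with perturbation $f_i$ rewrites as the fixed-point identity
$$u_{i,\la}(t) = J_{\la}^{\om}(t)\fk{ x_i\, e^{-t/\la} + \frac{1}{\la}\int_0^t e^{-s/\la} u_{i,\la}(t-s)\,ds + \la f_i(t) }.$$
The m-dissipativity of $A(t)+\om I$ at each fixed $t$ (via the rescaling in Remark \ref{perturbed_control}(1)) gives the single-argument contraction $(1-\la\om)\nrm{J_{\la}^{\om}(t)a - J_{\la}^{\om}(t)b} \le \nrm{a-b}$, and subtracting the two fixed-point identities produces the Volterra inequality
$$(1-\la\om)\phi_\la(t) \le e^{-t/\la}\nrm{x_1-x_2} + \frac{1}{\la}\int_0^t e^{-(t-\tau)/\la}\phi_\la(\tau)\,d\tau + \la\psi(t),$$
with $\psi(\tau) := \nrm{f_1(\tau)-f_2(\tau)}$.

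To resolve this inequality, I set $\Phi_\la(t) := \frac{1}{\la}\int_0^t e^{-(t-\tau)/\la}\phi_\la(\tau)\,d\tau$, which satisfies $\Phi_\la(0)=0$ together with the identity $\la\Phi_\la'(t) + \Phi_\la(t) = \phi_\la(t)$. Substituting into the Volterra inequality produces the scalar linear differential inequality
$$\Phi_\la'(t) - \om_\la\Phi_\la(t) \le \frac{\nrm{x_1-x_2}}{\la(1-\la\om)}\,e^{-t/\la} + \frac{\psi(t)}{1-\la\om}, \qquad \om_\la := \frac{\om}{1-\la\om},$$
and the algebraic identity $\om_\la + \frac{1}{\la} = \frac{1}{\la(1-\la\om)}$ lets the integrating factor $e^{-\om_\la t}$ absorb the $e^{-t/\la}$ source precisely, yielding an explicit closed-form bound for $\Phi_\la$ and hence for $\phi_\la$ whose leading term is $e^{\om_\la t}\nrm{x_1-x_2} + \frac{1}{1-\la\om}\int_0^t e^{\om_\la(t-\tau)}\psi(\tau)\,d\tau$ up to remainders carrying factors of $e^{-t/\la}$ or $\la$. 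Letting $\la\to 0+$, we have $\om_\la\to\om$, the remainders vanish, and $u_{i,\la}\to u_i$ locally uniformly on $\rep$; passing to the limit gives the stated estimate. The two cases (1) and (2) are unified because the derivation uses only the resolvent contraction of $A(t)+\om I$ at a single $t$, which is valid under either Assumption \ref{general-IVE1} or Assumption \ref{general-IVE2}.

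The main technical point is the Gronwall step: one must verify that the convolution on the right does not absorb the factor $(1-\la\om)$ on the left, and this is precisely what the cancellation $(1-\la\om)/\la - 1/\la = -\om$ guarantees after the $\Phi_\la$ substitution, converting the kernel $\frac{1}{\la}e^{-s/\la}$ into a genuine exponential semigroup of rate $\om_\la$ in the limit. A secondary subtlety is that $u_i$ is given only through an integral inequality, so the limit passage is carried out at the level of the Yosida approximants and then transferred to $u_i$ via the uniform convergence, in the same spirit as the rest of the paper.
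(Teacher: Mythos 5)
Your proposal is correct and follows essentially the same route as the paper: the fixed-point identity $u_{i,\la}(t)=\Jl^{A(t)}(t)\fk{\la f_i(t)+e^{-t/\la}x_i+\frac{1}{\la}\int_0^t e^{-\tau/\la}u_{i,\la}(t-\tau)d\tau}$, the resolvent contraction giving the factor $\frac{1}{1-\la\om}$, the resulting Volterra inequality for $\nrm{u_{1,\la}(t)-u_{2,\la}(t)}$, and the passage $\la\to 0+$ via the locally uniform convergence of the approximants. Your explicit $\Phi_\la$-substitution and integrating-factor computation is exactly the content of the paper's Lemma \ref{s-t-integral-inequality}, which the paper simply cites at that step, so there is no substantive difference.
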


For the comparison between the whole line and the half line problem we have:

\begin{cor} \label{compare-half-whole-line}
Let  $A(t)$ fulfill Assumption \ref{general-IVA0} and either Assumption \ref{general-IVE1}, with $f\in BUC(\re,X)$, or \ref{general-IVE2}, with $L_g$ the Lipschitz constant of $g,$ $0\le L_g<-\om,$ and $f\in \bucrx.$ Then the solution $u$ of (\ref{whole-line-inhomogeneous-equation}) and the solution $v$ of (\ref{half-line-inhomogeneous-equation}) satisfy
\begin{equation*}
\nrm{u(t)-v(t)}\le \exp(\om t)\nrm{u(0)-x_0}
\end{equation*}
for all $ 0\le s\le t.$ 
\end{cor}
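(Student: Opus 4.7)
The plan is to reduce the corollary to Proposition \ref{half-line-inhomgeneous-stability} by regarding the whole-line solution $u$, restricted to $[0,\infty)$, as a half-line integral solution with the \emph{same} right-hand side $f$ but with initial value $u(0)$. Once this is established, the stability estimate with $f_1=f_2=f$, $x_1=u(0)$, $x_2=x_0$ immediately yields
\[
\nrm{u(t)-v(t)}\le \exp(\om t)\nrm{u(0)-x_0}+\int_0^t\exp(\om(t-\tau))\nrm{f(\tau)-f(\tau)}d\tau,
\]
and the integral term vanishes. So the whole task is this reduction together with checking that the hypotheses of Proposition \ref{half-line-inhomgeneous-stability} apply.

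First I would record that, under either set of assumptions on $A(t)$ and $f$ in the statement, Theorem \ref{buc_int_sol}\eqref{buc_int_sol_re} produces the whole-line integral solution $u$, and Theorem \ref{buc_int_sol}\eqref{buc_int_sol_rep} (applied with $u_0=x_0\in\overline{D}$, which is available since we are comparing to a half-line solution built from such data) produces the half-line solution $v$. Note that in the case of Assumption \ref{general-IVE1} we have $g=0$ so the $L_g$ hypothesis is vacuous, while under Assumption \ref{general-IVE2} the standing assumption $0\le L_g<-\om$ covers both theorems.

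Next I would verify the restriction step. By Definition \ref{def-int-sol-inhomo}(2), the whole-line solution $u$ satisfies the integral inequality for all $-\infty<r\le t<\infty$ and all $[x,y]\in A(s)+\om I$ with $s\in\re$. Restricting this inequality to $0\le r\le t$ and $s\in[0,\infty)$ is precisely the integral inequality of Definition \ref{def-int-sol-inhomo}(1), so $u|_{[0,\infty)}$ is an integral solution on $\rep$ of (\ref{half-line-inhomogeneous-equation}) with initial value $u(0)$. Since $v$ is an integral solution on $\rep$ with initial value $x_0$ and the same right-hand side $f$, Proposition \ref{half-line-inhomgeneous-stability} (part (1) under Assumption \ref{general-IVE1}, part (2) under Assumption \ref{general-IVE2}) applies to the pair $(u|_{\rep},v)$ and gives the desired bound.

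The only mildly subtle point I would expect is bookkeeping in the case of Assumption \ref{general-IVE2}: one must ensure that the control function $g$ and the Lipschitz constant $L_g$ appearing in the whole-line integral inequality match those used in the half-line stability estimate, which is automatic since the same $A(t)$ (and hence the same $g$, $h$, $L$) is used throughout. There is no dynamical or compactness argument needed here; the corollary is a direct specialisation of the inhomogeneous stability estimate.
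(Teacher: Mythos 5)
Your proposal is correct and follows essentially the same route as the paper: the paper's proof is the one-line remark that $u$ restricted to $\rep$ is the (unique) integral solution with initial value $u(0)$, after which Proposition \ref{half-line-inhomgeneous-stability} with $f_1=f_2=f$ gives the bound. The only point worth making explicit, which the paper handles by invoking uniqueness, is that Proposition \ref{half-line-inhomgeneous-stability} is stated for the solutions constructed as Yosida limits, so one should identify $u|_{\rep}$ with that constructed solution via uniqueness of integral solutions rather than merely observe that $u|_{\rep}$ satisfies the half-line integral inequality.
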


The comparsion between the whole and the half line leads to:

\begin{theo}\label{uniform_convergence_half_line}
Let $I=\rep,$ and $A(t)$ fulfill Assumption \ref{general-IVA0} and either Assumption \ref{general-IVE1}, with $f\in BUC(\rep,X),$ or Assumption \ref{general-IVE2} with $f\in\bucrpx,$ and $L_g$ the Lipschitz constant of $g,$ $0\le L_g<-\om.$
Further let $u_0\in \overline{D},$ and  $\ul$ the corresponding Yosida-approximations to equation (\ref{int_half-line-Yosida-Approx}). Then $\ul$ converge unifomly on $\rep$ to an integral solution of (\ref{int_Half-Line-Eq}).
\end{theo}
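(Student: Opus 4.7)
The plan is to upgrade the local uniform Cauchy property of $\ul$ on each compact interval $[0,T]$ (given by Theorems \ref{IVA0-IVE1-CauchyConvergence}/\ref{IVA0-IVE2-CauchyConvergence}, adapted to the inhomogeneous setting via $B(t)=A(t)+f(t)$) to a uniform Cauchy property on all of $\rep$, by comparing with the globally controlled whole-line problem. The hypothesis $\om<0$ under Assumption \ref{general-IVE1}, or $L_g<-\om$ under Assumption \ref{general-IVE2}, ensures that the comparison factor $e^{\om t}$ genuinely decays as $t\to\infty$, supplying the missing ingredient beyond any fixed compact interval.

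First I would invoke Theorem \ref{buc_int_sol}(\ref{buc_int_sol_re}) to obtain an integral solution $u\in BUC(\re,X)$ of (\ref{whole-line-inhomogeneous-equation}); its construction through whole-line Yosida approximants $\tilde u_\la$ (analogously to Theorem \ref{Cauchy-solutions}) yields uniform convergence $\tilde u_\la\to u$ on $\re$. By Theorem \ref{buc_int_sol}(\ref{buc_int_sol_rep}), the half-line approximants $\ul$ converge locally uniformly on each $[0,T]$ to an integral solution $v$ of (\ref{half-line-inhomogeneous-equation}), and Corollary \ref{compare-half-whole-line} gives
\[
\|u(t)-v(t)\|\le e^{\om t}\|u(0)-u_0\|,\qquad t\ge 0.
\]

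The main technical step is to lift this comparison to the Yosida approximants, namely to establish
\[
\|\tilde u_\la(t)-\ul(t)\|\le e^{\om t/(1-\la\om)}\,C_\la,\qquad t\ge 0,
\]
with $C_\la$ bounded uniformly in small $\la$. This is done by repeating the proof of Proposition \ref{half-line-inhomgeneous-stability} and Corollary \ref{compare-half-whole-line} directly on the Yosida-approximated equations: both $\tilde u_\la$ and $\ul$ feel the same drift $A(t)u+\om u+f(t)$, and $\fk{d/dt}_\la$ differs between them only through the ``boundary tail'' $\dl\int_t^\infty\el{s}\tilde u_\la(t-s)\,ds$ versus the ``initial contribution'' $\dl u_0\,\el{t}$; the stability inequality (\ref{resolvent_t_stability}) together with Remark \ref{perturbed_control}(\ref{perturbed_control_Jlw}) absorbs these into the factor $e^{\om t/(1-\la\om)}$ times uniformly-in-$\la$ bounded initial data.

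With the approximant comparison in hand, uniform Cauchy on $\rep$ follows by an $\ep/3$ argument: choose $T$ so large that $e^{\om T}C_\la<\ep/3$ uniformly in small $\la$; choose $\la,\mu$ so small that $\|\ul-u_\mu\|_{C([0,T],X)}<\ep/3$ (by local Cauchy) and $\|\tilde u_\la-\tilde u_\mu\|_\infty<\ep/3$ (by uniform Cauchy on $\re$); then for $t\ge T$ insert the whole-line approximants as intermediate points and bound
\[
\|\ul(t)-u_\mu(t)\|\le\|\ul(t)-\tilde u_\la(t)\|+\|\tilde u_\la(t)-\tilde u_\mu(t)\|+\|\tilde u_\mu(t)-u_\mu(t)\|<\ep.
\]
Passing to the limit $\la\to 0+$ in the defining integral-solution inequality shows the uniform limit is an integral solution of the half-line equation. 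The main obstacle is precisely the approximant-level comparison in the previous paragraph, since Corollary \ref{compare-half-whole-line} speaks only about the limit solutions; one must work directly with the Yosida-approximated equations and treat the boundary term $\dl u_0\el{t}$ as a controlled perturbation via the dissipativity of $A(t)+\om I$.
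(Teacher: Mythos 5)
Your proposal is correct and follows essentially the same route as the paper: the paper also restricts the whole-line Yosida approximant to $\rep$, views it as a solution of the half-line approximate equation with initial value $\vl(0)$ perturbed by the exponentially small tail $\frac{1}{\la^2}\int_{-\infty}^0 e^{(\tau-t)/\la}(\vl(\tau)-\vl(0))d\tau$, and then applies the approximant-level stability estimate (Corollary \ref{half-line-u-la-stability-cor}, i.e.\ inequality (\ref{half-line-u-la-stability})) to get the decaying comparison that feeds the $\ep/3$ argument. The only ingredient you leave implicit that the paper makes explicit is the preliminary mollification of $f$ (justified by the same stability corollary) so that Assumption \ref{general-IVE2} applies to $B(t)=A(t)+f(t)$.
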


\begin{theo}\label{uniform-whole line-convergence}
Let $I=\re,$ and $A(t)$ fulfill Assumption \ref{general-IVA0} and either Assumption \ref{general-IVE1}, with $f\in \bucrx,$ or Assumption \ref{general-IVE2} with $f\in\bucrx,$ and $L_g$ the Lipschitz constant of $g,$ $0\le L_g<-\om.$
Further let $\ul$ the corresponding Yosida-approximations to equation (\ref{int_line-Yosida-Approx}). Then $\ul$ converge unifomly on $\re$ to an integral solution of (\ref{int_whole-Line-Eq}).
\end{theo}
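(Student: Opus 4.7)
The plan is to mirror the half-line argument of Theorem \ref{uniform_convergence_half_line}, but using the convolution form (\ref{defii-whole-line-Yosida-Approx}) of the Yosida derivative in place of the initial-value one. Three ingredients will be combined: existence of an integral solution $u\in\bucrx$ on $\re$, already furnished by Theorem \ref{buc_int_sol}(\ref{buc_int_sol_re}); a uniform Cauchy property $\nri{u_\la-u_\mu}\to 0$ for the approximants as $\la,\mu\to 0+$; and identification of the common limit with $u$, for which uniqueness comes from Proposition \ref{inhomgeneous-stability}.

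First, for every sufficiently small $\la>0$ the inhomogeneous Yosida approximation (\ref{line-Yosida-Approx}) is equivalent to the fixed-point equation
$$
u_\la(t)=\Jlw(t)\fk{\dl\intO\el{s}u_\la(t-s)\,ds+\la f(t)},
$$
obtained by rewriting $(I-\la(A(t)+\om))u_\la(t)\ni \dl\intO\el{s}u_\la(t-s)\,ds+\la f(t)$ with the help of (\ref{defii-whole-line-Yosida-Approx}). Under Assumption \ref{general-IVE1} with $\om<0$, the convolution is a contraction in \bucrx and $\Jlw(t)$ is nonexpansive up to the prefactor $1/(1-\la\om)$, so the right-hand side is contractive and yields a unique fixed point $u_\la\in\bucrx$. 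Under Assumption \ref{general-IVE2} with $L_g<-\om$, the same argument goes through once the extra $g$-contribution of Remark \ref{perturbed_control}(\ref{perturbed_control_Jlw}) is absorbed via the strict inequality $L_g+\om<0$. Uniform continuity of $u_\la$ then transfers from that of $f$ through the continuity estimate for $t\mapsto\Jlw(t)x$.

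Second, to prove $\nri{u_\la-u_\mu}\to 0$, I would insert the two fixed-point identities into the stability inequality (\ref{resolvent_t_stability}), convolve with $\dm\el{\cdot}$ exactly as in the proof of Theorem \ref{Cauchy-solutions}, and take the supremum over $t\in\re$. The sign condition $\om<0$ (respectively $L_g+\om<0$) produces a strict contraction factor acting on $\nri{u_\la-u_\mu}$, while the remainder terms are of order $O(\sqrt{\la+\mu})$ and depend only on the moduli of uniform continuity of $\hw$, $g$ and $f$; this forces $\nri{u_\la-u_\mu}\to 0$.

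Third, with the uniform limit $u_\infty:=\lim_{\la\to 0+}u_\la\in\bucrx$ in hand, I would verify the integral-solution inequality of Definition \ref{def-int-sol-inhomo}(2) for $u_\infty$ by passing to the limit in the corresponding inequality satisfied by $u_\la$; the latter is obtained by applying (\ref{resolvent_t_stability}) to $u_\la(\nu)$ against a test pair $[x,y]\in A(s)+\om I$, using the inclusion $\fk{\frac{d}{dt}}_{\la}u_\la(\nu)-f(\nu)\in A(\nu)u_\la(\nu)+\om u_\la(\nu)$, and integrating in $\nu$. Proposition \ref{inhomgeneous-stability} with $f_1=f_2=f$ then guarantees uniqueness, so $u_\infty$ must coincide with the integral solution of Theorem \ref{buc_int_sol}(\ref{buc_int_sol_re}). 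The main obstacle is the Cauchy step on the whole of $\re$: on the half line the $e^{\om t}\nrm{x_1-x_2}$ term carries the initial-data discrepancy away, but on $\re$ no such anchor exists, so the estimate has to close purely through the sign conditions $\om<0$ or $L_g+\om<0$; in the Assumption \ref{general-IVE2} case, controlling the combined effect of the $g$-contribution and the convolution weight $\el{s}/\la$ uniformly in $t\in\re$ is the technical heart of the proof.
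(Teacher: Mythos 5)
Your overall three-step architecture (existence, uniform Cauchy property, identification of the limit as the integral solution) is sound, but your route through the middle step is genuinely different from the paper's, and it is there that your argument has a soft spot. The paper does not redo the whole-line Cauchy estimate with the inhomogeneity $f$ carried along. Instead it mollifies: $f_{\ep}$ is bounded and Lipschitz, so by Proposition \ref{B(t)+f(t)E2)} the operator $A(t)+f_{\ep}(t)$ again satisfies Assumption \ref{general-IVE1} or \ref{general-IVE2}, and Theorem \ref{Cauchy-solutions} applies verbatim to give uniform convergence of $u_{\la}^{\ep}$ to an integral solution. The stability inequality (\ref{whole-line-u-la-stability}) holds at the level of the approximants, uniformly in $\la$, and bounds $\nri{\ul-u_{\la}^{\ep}}$ by a multiple of $\nri{f-f_{\ep}}$; a three-epsilon argument then transfers the uniform Cauchy property to $\ul$ itself. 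This reduction is what lets the paper avoid re-examining the two-dimensional Bessel resolvent with a merely uniformly continuous right-hand side.

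The concrete gap in your version is the claim that, after convolving and taking suprema, the remainder is $O(\sqrt{\la+\mu})$ and ``depends only on the moduli of uniform continuity'' of $\hw$, $g$ and $f$. Two problems. First, taking $\sup_{t\in\re}$ does not close the estimate: the inequality of Lemma \ref{the_ineq.} couples $\ul(t)$ with $\um(s-r)$ at shifted times, so no strict contraction acts on $\nri{\ul-\um}$ directly; one genuinely needs the two-variable resolvent of Lemma \ref{main-ineq} evaluated on the diagonal, which is why the Bessel-function estimates of Lemma \ref{SomeIntegrals} appear. Second, the resulting kernel term
$$
\frac{\la\mu}{\Lambda^3}\int_0^{\infty}\int_0^{\infty} \IO{xy}\exp\fk{ \mlmw x + \llmw y}\nrm{f(t-x)-f(t-y)}\,dy\,dx
$$
has total kernel mass of order $1/\btr{\om}$, not $o(1)$; it tends to zero only because the mass concentrates on the diagonal $x=y$, which is extracted via the $\btr{x-y}$ estimate of Lemma \ref{SomeIntegrals}(\ref{infinite_lipschitz_bessel_convergence}) and hence requires either Lipschitz data or an approximation of $f$ by Lipschitz functions (the paper's mollification). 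A rate $O(\sqrt{\la+\mu})$ is in general false for $f$ that is only uniformly continuous. Your argument can be repaired either by inserting the mollification step explicitly, or by a split of the integration region into $\btr{x-y}<\delta$ and $\btr{x-y}\ge\delta$ with the modulus of continuity of $f$, but as written the key convergence is asserted rather than proved. The remaining steps (fixed-point existence in \bucrx, passage to the limit in the integral-solution inequality, uniqueness via Proposition \ref{inhomgeneous-stability}) are consistent with the paper.
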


In the next theorems we apply the method of construction of the approximants for equations (\ref{int_half-line-Yosida-Approx}) and (\ref{int_whole-Line-Eq}).

\begin{theo} \label{solution-in-Y}
Let $I=\re.$
\begin{enumerate}
\item Let $Y\subset BUC(\re,X)$ be a closed and translation invariant subspace, such that 
$$\bk{t\mapsto \Jl(t)f(t)} \in Y, \mbox{ for all }f\in Y.$$
Further, let  $A(t)$ fulfill Assumption \ref{general-IVA0}, and  Assumption \ref{general-IVE1} with $\om <0.$ Then equation (\ref{int_whole-Line-Eq}) has an integral solution $u\in Y$ on $\re.$
\item Let $Y\subset \bucrx$ be a closed and translation invariant subspace,such that 
$$\bk{t\mapsto \Jl(t)f(t)} \in Y,\mbox{ for all }f\in Y.$$ 
Further, let  Assumption \ref{general-IVE2} with  $L_g$ the Lipschitz constant of $g,$ $0\le L_g<-\om.$ Then equation (\ref{int_whole-Line-Eq}) has an integral solution $u\in Y$ on $\re.$
\end{enumerate}
\end{theo}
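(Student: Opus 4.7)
The plan is to construct the Yosida approximants $\ul$ of equation (\ref{int_line-Yosida-Approx}) as fixed points of a mapping that leaves $Y$ invariant, and then to pass to the limit $\la\to 0+$ using the closedness of $Y$ in $\bucrx$. Unwinding (\ref{defii-whole-line-Yosida-Approx}), the approximating equation rewrites as $\ul(t)=\Jlw(t)\bigl[\dl\intO\nres{s}\,\ul(t-s)\,ds\bigr]$, so I define
\[
(T_\la v)(t):=\Jlw(t)\Bigl[\dl\intO\nres{s}\,v(t-s)\,ds\Bigr]\qquad \text{for } v\in\bucrx.
\]

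First I would verify $T_\la Y\subseteq Y$. For $v\in Y$, write $\tau_s v(t):=v(t-s)$; translation invariance gives $\tau_s v\in Y$ for every $s\ge 0$, and $s\mapsto\tau_s v$ is continuous from $\rep$ into $\bucrx$ by the uniform continuity of $v$. Hence $s\mapsto \dl\nres{s}\tau_s v$ is Bochner integrable with values in $Y$, and since $Y$ is closed, the Bochner integral $\dl\intO\nres{s}\tau_s v\,ds$, which represents $t\mapsto \dl\intO\nres{s}\,v(t-s)\,ds$, lies in $Y$. By Remark~\ref{perturbed_control}(1), $\Jlw(t)x=J_{\la/(1-\la\om)}(t)\bigl(\tfrac{1}{1-\la\om}x\bigr)$, and since $Y$ is a subspace the rescaling by $\tfrac{1}{1-\la\om}$ preserves $Y$; therefore the assumed invariance under $f\mapsto\bk{t\mapsto \Jl(t)f(t)}$ transfers to $\Jlw$, yielding $T_\la v\in Y$.

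Next, in both cases one has $\om<0$ (in case~(2) because $-\om>L_g\ge 0$). Combining the nonexpansiveness of $J_{\la/(1-\la\om)}(t)$ with Remark~\ref{perturbed_control}(1) gives $\nrm{\Jlw(t)x-\Jlw(t)y}\le\tfrac{1}{1-\la\om}\nrm{x-y}$, and since $\dl\intO\nres{s}\,ds=1$ the averaging is a contraction in $\nrm{\cdot}_\infty$. Thus $T_\la$ is Lipschitz on $\bucrx$ with constant $\tfrac{1}{1-\la\om}<1$, and Banach's fixed point theorem, applied to the closed subspace $Y$, produces a unique $\ul\in Y$ with $T_\la\ul=\ul$. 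By uniqueness of bounded solutions of (\ref{int_line-Yosida-Approx}), this $\ul$ coincides with the Yosida approximant furnished by Theorem~\ref{Cauchy-solutions}. Finally, Theorem~\ref{uniform-whole line-convergence} (applied with $f\equiv 0$) yields $\ul\to u$ uniformly on $\re$, where $u$ is an integral solution of (\ref{int_whole-Line-Eq}); closedness of $Y$ forces $u\in Y$.

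The step I expect to be the main obstacle is the invariance $T_\la Y\subseteq Y$: the hypothesis supplies only pointwise-in-$t$ invariance under the resolvent multiplier $f\mapsto\bk{t\mapsto\Jl(t)f(t)}$, and it must be upgraded to invariance under the exponential averaging. The upgrade is handled ``for free'' by viewing the averaging as a Bochner integral of a continuous $Y$-valued map and using that $Y$ is closed. The two parts of the theorem then differ only in which control structure (Assumption~\ref{general-IVE1} vs.\ Assumption~\ref{general-IVE2}) is fed into Theorem~\ref{uniform-whole line-convergence} at the final step.
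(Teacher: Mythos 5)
Your proposal is correct and follows essentially the same route as the paper: the paper's (one-sentence) proof likewise observes that the Yosida approximant is the Banach fixed point of the mapping (\ref{fixpoint-mapping}), which leaves the closed, translation-invariant, resolvent-invariant subspace $Y$ invariant, and then passes to the uniform limit inside $Y$. You have merely filled in the details (the Bochner-integral argument for invariance under the exponential averaging, the contraction constant $\tfrac{1}{1-\la\om}$, and the appeal to the whole-line convergence results) that the paper leaves implicit.
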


As in \cite{AulbachMinh} only the case of Assumption \ref{general-IVE1} is considered; we can extend the result to the case of Assumption \ref{general-IVE2}. Using $Y=\apx,$ we get:

\begin{theo}\label{solution-in-APRX}
Let $\bk{t\mapsto \Jl(t)x}\in \apx$ for all $x\in X, 0<\la\le 1/\btr{\om}.$  
Further, let  $A(t)$ fulfill Assumption \ref{general-IVA0} and either Assumption \ref{general-IVE1} with $\om <0,$ or Assumption \ref{general-IVE2} with $L_g$ the Lipschitz constant of $g,$ $0\le L_g<-\om.$ Then equation (\ref{whole-Line-inh-Eq}) has an integral solution $u\in \apx$ on $\re.$
\end{theo}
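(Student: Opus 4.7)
My plan is to reduce the statement to Theorem \ref{solution-in-Y} by choosing $Y=\apx$. Since $\apx$ is a classical example of a closed, translation-invariant subspace of $\bucrx$, only the invariance property
$$
\bk{t\mapsto \Jl(t)f(t)}\in \apx \quad\text{for every } f\in \apx,\; 0<\la\le 1/\btr{\om}
$$
needs to be checked. Once this is established, part (1) of Theorem \ref{solution-in-Y} covers the case of Assumption \ref{general-IVE1} with $\om<0,$ and part (2) handles Assumption \ref{general-IVE2} with $0\le L_g<-\om,$ yielding an integral solution $u\in\apx$ of (\ref{whole-Line-inh-Eq}) on $\re$ in each case.

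The main obstacle is the gap between the hypothesis, which gives almost periodicity of $t\mapsto \Jl(t)x$ only for \emph{constant} arguments $x\in X,$ and the joint statement we need, where both $\Jl(t)$ and the argument $f(t)$ vary in $t$. My plan for bridging this is to combine the nonexpansivity of $\Jl(t)$ (which holds since $A(t)$ is m-dissipative by Assumption \ref{general-IVA0}) with the relative compactness of $f(\re)$ (a standard property of AP functions) and Bochner's theorem on common $\ep$-almost periods.

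Concretely, given $\ep>0,$ I would first cover the relatively compact set $\overline{f(\re)}$ by a finite $\ep$-net $\bk{x_1,\ldots,x_n}.$ By hypothesis each $t\mapsto \Jl(t)x_j$ lies in $\apx,$ and together with $f$ itself this is a finite family of AP functions, hence admits a common $\ep$-almost period $\tau.$ For arbitrary $t$ I choose $x_{j(t)}$ with $\nrm{f(t)-x_{j(t)}}<\ep,$ so that $\nrm{f(t+\tau)-x_{j(t)}}<2\ep,$ and estimate by the triangle inequality
$$
\nrm{\Jl(t+\tau)f(t+\tau)-\Jl(t)f(t)}\le \nrm{f(t+\tau)-x_{j(t)}}+\nrm{\Jl(t+\tau)x_{j(t)}-\Jl(t)x_{j(t)}}+\nrm{x_{j(t)}-f(t)},
$$
where the outer terms use nonexpansivity of $\Jl(t+\tau)$ and $\Jl(t).$ Since the middle term is at most $\ep$ and the outer terms are at most $3\ep$ in total, this gives a uniform bound of order $\ep$ in $t,$ which by Bohr's criterion means $t\mapsto \Jl(t)f(t)\in\apx.$

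With the invariance verified, Theorem \ref{solution-in-Y} applies directly in both settings and produces the desired integral solution $u\in\apx.$ The only subtlety I would double-check in writing out the argument is that the resolvent $\Jl(t)$ appearing in Theorem \ref{solution-in-Y} is indeed the one associated with $A(t)$ alone (so that nonexpansivity is automatic), rather than $\Jlw(t);$ by Remark \ref{perturbed_control}(\ref{perturbed_control_Jlw}), these are related by a simple rescaling, so no further work is required to transfer the AP hypothesis between them.
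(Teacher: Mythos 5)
Your proposal is correct and follows essentially the same route as the paper: reduce to Theorem \ref{solution-in-Y} with $Y=\apx$ and verify the invariance $\bk{t\mapsto \Jl(t)f(t)}\in\apx$ for $f\in\apx$. The only difference is that the paper disposes of this invariance by citing \cite[Chapter VII, Lemma 4.1]{Dal-Krein}, whereas you prove it directly via the relative compactness of $f(\re)$, a finite $\ep$-net, common $\ep$-almost periods, and the nonexpansivity of the resolvents --- which is precisely the standard argument behind that citation.
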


\begin{remk} Similar results are abtained for periodic, antiperiodic, asymptotically almost periodic, weakly almost periodic(with or without compact range), and other types of almost periodicity, continuous almost automorphy, and ergodicity, which lead to a closed translation invariant subspace, or closed translation invariant cone of $\bucrx$, which is invariant with respect to $\Jl(\cdot), \forall 0<\la<1/\btr{\om}.$
\end{remk}

In the previous result the uniform continuity is necessary, and as proved by \cite{Veech} continuous almost automorphic functions are. S. Bochner introduced the notion of almost automorphic functions which are not necessarily uniformly continuous, but to such functions the method applies as well.
\begin{defi}
A function $f\in C(\re,X)$ is said to be almost automorphic if for any $\bk{s_n}_{n\in\za},$ there is a subsequence $\bk{s_{n_k}}_{k\in\za}$ such that
$$
\lim_{k\to\infty}f(t+s_{n_k})=g(t), \forall t\in \re,
$$
and
$$
\lim_{k\to\infty}g(t-s_{n_k})=f(t), \forall t\in \re.
$$
We define 
$$AA(\re,X)=\bk{f\in C(\re,X): f \mbox{ almost automorphic}}.$$
\end{defi}

We will show that the property to be almost automorphic carries over from the control function.
\begin{pro}\label{solution-in-AARX}
Let  $A(t)$ fulfill Assumption \ref{general-IVA0} and Assumption \ref{general-IVE1} with $\om <0,$ and a control function $h\in AA(\re,X).$
Then equation (\ref{int_whole-Line-Eq}) has an integral solution $u\in AA(\re,X).$
\end{pro}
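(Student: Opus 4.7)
The plan is to replay the Yosida-approximation fixed-point construction of Theorem \ref{solution-in-Y} inside the Banach space $Y:=AA(\re,X)$ equipped with the sup norm. This space is complete (uniform limits of almost automorphic functions are almost automorphic), translation invariant by definition, and strictly larger than the almost-periodic subspace \apx; the overall scheme of Theorem \ref{solution-in-Y} goes through provided I verify the invariance of $Y$ under the pointwise resolvent, because the contraction and convergence steps will then be automatic from $\om<0$.

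The decisive step is to show that if $v\in Y$, then $\bk{t\mapsto\Jlw(t)v(t)}\in Y$. Given $\bk{s_n}\subset\re$, a Cantor diagonal extraction produces a subsequence $\bk{s_{n_k}}$ along which $h(\cdot+s_{n_k})\to g_h$ and $v(\cdot+s_{n_k})\to g_v$ converge pointwise on $\re$, with the symmetric limits returning $h$ and $v$. Invoking Remark \ref{perturbed_control}(4) (with $g=0$) together with the Lipschitz continuity in the argument given by Remark \ref{perturbed_control}(1), I split
\begin{eqnarray*}
\lefteqn{\nrm{\Jlw(t+s_{n_k})v(t+s_{n_k})-\Jlw(t+s_{n_l})v(t+s_{n_l})}} \\
&\le& \frac{1}{1-\la\om}\nrm{v(t+s_{n_k})-v(t+s_{n_l})}+\frac{\la}{1-\la\om}\nrm{h(t+s_{n_k})-h(t+s_{n_l})}L(M),
\end{eqnarray*}
where $M$ is a $t$-uniform bound on $\nrm{\Jlw(\cdot)v(\cdot)}$. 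Such an $M$ is available because AA functions are bounded (so $v$ and $h$ are), and the reasoning of Remark \ref{hat_D_constant} then bounds $\nrm{\Jlw(t)z}$ uniformly in $t$ for bounded $z$. Both right-hand terms are Cauchy in $k,l$, so the left side converges pointwise, and the reverse direction is handled identically; hence $\Jlw(\cdot)v(\cdot)\in Y$.

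With this invariance in hand, the Yosida fixed-point map
$$
Tu(t):=\Jlw(t)\fk{\dl\intO \el{s}u(t-s)\,ds}
$$
sends $Y$ to $Y$: the inner convolution with the $L^1$-kernel $\dl \el{\cdot}$ preserves AA by dominated convergence, and the outer pointwise resolvent is handled by the previous step. Because $\om<0$ gives $\frac{1}{1-\la\om}<1$ and the convolution is sup-norm non-expansive, $T$ is a strict contraction on $Y$, yielding a unique fixed point $u_\la\in Y$ solving (\ref{int_line-Yosida-Approx}). Theorem \ref{Cauchy-solutions}(1) gives $u_\la\to u$ uniformly on $\re$ to an integral solution $u$ of (\ref{int_whole-Line-Eq}), and closedness of $Y$ under uniform limits gives $u\in AA(\re,X)$. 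The main obstacle is the nonlinear invariance step: in contrast to the almost-periodic setting of Theorem \ref{solution-in-APRX}, almost automorphy lacks a clean algebraic or approximation characterisation, and one really has to push the diagonal subsequence through the nonlinear, time-dependent resolvent by hand using the quantitative estimate of Remark \ref{perturbed_control}.
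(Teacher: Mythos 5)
Your invariance argument for the pointwise resolvent is essentially sound: the triangle--inequality splitting via Remark \ref{perturbed_control}, applied along a common diagonal subsequence for $h$ and $v$, does push almost automorphy through $t\mapsto \Jlw(t)v(t)$ (including the return limits), the convolution with the kernel $\frac{1}{\la}\el{\cdot}$ preserves $AA(\re,X)$ by dominated convergence, and so each fixed point $\ul$ lies in $AA(\re,X)$. The gap is in the last step. The point of this proposition --- as the paper signals in the remark immediately preceding the definition of $AA(\re,X)$ --- is precisely that Bochner almost automorphic functions need \emph{not} be uniformly continuous, so the control function $h$ is only in $C_b(\re,X)$. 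Theorem \ref{Cauchy-solutions}(1), which you invoke for uniform convergence on $\re$, rests on Lemma \ref{Cauchy-solutions-lem}, whose treatment of Assumption \ref{general-IVE1} mollifies $h$ and needs $h_{\ep}\to h$ uniformly on all of $\re$, i.e.\ $h\in BUC(\re,X)$. For $h$ merely bounded and continuous only Lemma \ref{cpt_open_convergence} is available, which gives \emph{locally} uniform convergence of $\ul$; and $AA(\re,X)$ is not closed under locally uniform limits (for instance $\arctan$ is a locally uniform limit of continuous periodic functions but is not almost automorphic). Hence ``each $\ul\in AA(\re,X)$ plus convergence'' does not yield $u\in AA(\re,X)$. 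Your argument is complete only under the additional hypothesis that $h$ is uniformly continuous, in which case the proposition essentially reduces to Theorem \ref{solution-in-Y} applied to the closed translation invariant subspace $AA(\re,X)\cap BUC(\re,X)$ together with your invariance lemma.

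The paper takes a different route that avoids asking whether the approximants are almost automorphic at all. It uses Lemma \ref{cpt_open_convergence} to get bounded, locally uniformly convergent approximants, passes the translation estimate (\ref{assu22_equicontinuity_ineq}) to the limit to obtain
$$
\nrm{u(t+s_l)-u(t+s_k)}\le \int_0^{\infty}\exp(\om r)\nrm{h(t+s_l-r)-h(t+s_k-r)}L(K_2)\,dr,
$$
and then transfers almost automorphy from $h$ to $u$ directly by dominated convergence (first to show $\bk{u(t+s_l)}_l$ is Cauchy, then to verify the return limits). This needs only pointwise convergence of $\ul$ and no closedness property of $AA(\re,X)$. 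If you want to keep your structure for general $h\in AA(\re,X)$, you must replace your final limit step by such a quantitative transfer argument on the limit function itself.
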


\begin{remk}[Used Method and Comparison to existing results] 
The method applied is mainly used to obtain general asymptotics of the time dependent initial value problem.
As far as existence is concerned, there are partly more general results by different approaches, see below. The main point of the approach via the Yosida-approximants of the derivative chosen here is to deduce both existence and at the same time general results on the asymptotic behaviour of solutions, such as Thm. \ref{solution-in-Y}, Thm. \ref{solution-in-APRX} and Prop. \ref{solution-in-AARX}.
 \begin{enumerate}
\item In the view of existence the Assumption \ref{general-IVE1} is quite similar to 
the one in the Crandall Pazy paper, \cite[(C.1), p. 62]{CrandallPazy}, but Assumption \ref{general-IVE1} needs all pairs $(x,y)\in A(t)$ not only $(J_{\la}(t)x, A_{\la}(t)x)$. 
\item Assumption \ref{general-IVE2} is stronger than the one in 
Crandall's and Pazy's paper \cite[(C.2), p. 62]{CrandallPazy} or in the textbook of Ito and Kappel \cite{Ito_Kappel}, but nearly identical to the one of Kato \cite[(3.1), p. 513]{TKato}, who proved existence in the case $X$ uniformly convex. Comparing with actual assumptions where $g,h$ is  only continuous and of bounded variation \cite[(E2), p. 187]{Ito_Kappel} its Lipschitz continuity strengthens the assumption. This stronger assumption allows to obtain the
 uniform convergence of the half-line and whole-line problem, even if a dependency on 
$\nrm{y_2}$ is given. 
\item In the case of Assumption \ref{general-IVE2} the problem of  boundedness of the 
$\fk{\frac{d}{dt}}_{\la}u_{\la}$ occurs. This lead to the Lipschitz continuity of the control functions $h,g.$ For this 
reason the assumption is strengthened by linearizing the dependency on $\nrm{y_2}. $ Remember, Crandall/Pazy, or Ito/Kappel use $\nrm{f(t_1)-f(t_2)}L(\nrm{x_2})(1+\nrm{y_2}).$ 
 In Assumption \ref{general-IVE2}, the dependency on $ \nrm{x_2}$ and $\nrm{y_2}$ is splitted, this shows that in the  prerequisites containing assumption 2.2 the 
Lipschitz constant of $g$ comes into play. 
\item When looking for general existence we refer to \cite{CrandallPazy}, \cite{Evans}, or to the textbook \cite{Ito_Kappel}, but in this paper we give an approach 
to obtain asymptotics which is coming up with the approximants. 
\item In case of a single
valued operator  boundedness is proven in \cite{bd-Kartsatos}, periodicity in  \cite{litcanu}.
\end{enumerate}
\end{remk}

\section{Proofs of Technical Preliminaries in the Half Line Case}

\begin{proof}[Proof of \ref{perturbed_control}:]  The first identity is a simple computation. For the second inequality note that,
$$
\btr{(A+\om)x}=\lim_{\la\to 0}\nrm{A^{\om}_{\la}x}=\lim_{\la\to 0}\frac{1}{\la}\nrm{\Jlw x-x}, $$
and
\begin{eqnarray*}
\frac{1}{\la}\nrm{(\Jlw-I)x}
&\le& \frac{1}{\la}\nrm{J_{\frac{\la}{1-\la\om}}x-x}+\frac{1}{\la}\nrm{J_{\frac{\la}{1-\la\om}}x-J_{\frac{\la}{1-\la\om}}(\frac{1}{1-\la\om}x)}\\
&\le&\nrm{A_{\frac{\la}{1-\la\om}}x}+\frac{\btr{\om}}{1-\la\om}\nrm{x}.
\end{eqnarray*}
\end{proof}

Starting with the initial value problem, we have to find the $\ul$ solving the approximate equation. Rearranging of (\ref{int_half-line-Yosida-Approx}) leads to a fixpoint equation. Thus, for small $\la>0,$ we have to solve the fixpoint equation for
$$
\Funk{F_{\la}}{C([0,T],X)}{C([0,T],X)}{u}{\bk{t\mapsto \Jlw(t)\fk{ \nres{t} x_0+\frac{1}{\la} \int_0^t\nres{\tau}\ul(t-\tau)d\tau}}}.
$$

\begin{lem}
Let $I=\rep,$ and $A(t)$ be $\om$-m-dissipative for some $\om \in \re.$ Then $F_{\la}$ has a fixpoint for all $\la < \frac{1}{\btr{\om}}.$
\end{lem}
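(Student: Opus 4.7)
The plan is to apply the Banach fixpoint theorem on $C([0,T],X)$ equipped with a Bielecki-type weighted sup norm. After the change of variable $s=t-\tau$ the fixpoint equation $u=F_\la u$ reads
\[
u(t) = \Jlw(t)\fk{\nres{t}x_0 + \dl\int_0^t \nres{t-s}\,u(s)\,ds},
\]
a delayed-convolution identity with the resolvent sitting out front. The key ingredient is the spatial Lipschitz bound
\[
\nrm{\Jlw(t)z_1 - \Jlw(t)z_2} \le \frac{1}{1-\la\om}\nrm{z_1-z_2},
\]
which follows from Remark \ref{perturbed_control}(1) and the nonexpansivity of $J_{\la/(1-\la\om)}(t)$; here $1-\la\om>0$ precisely because $\la<1/\btr{\om}$.

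First I would verify that $F_\la$ is a self-map of $C([0,T],X)$. For $u\in C([0,T],X)$ the bracketed expression $y_u(t):=\nres{t}x_0+\dl\int_0^t\nres{t-s}u(s)\,ds$ is continuous in $t$ by standard convolution arguments. The continuity of $t\mapsto\Jlw(t)y_u(t)$ then follows by combining the uniform spatial Lipschitz bound above with the continuity of $t\mapsto\Jlw(t)z$ for each fixed $z$ supplied (under Assumption \ref{general-IVE1} or \ref{general-IVE2}) by Remark \ref{perturbed_control}(\ref{perturbed_control_Jlw}).

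Next, equip $C([0,T],X)$ with the equivalent norm $\nrm{u}_\beta:=\sup_{0\le t\le T}e^{-\beta t}\nrm{u(t)}$ for $\beta\ge 0$ to be chosen. Using the Lipschitz bound for $\Jlw(t)$ together with the estimate $e^{-\beta t}\nrm{u_1(s)-u_2(s)}\le e^{-\beta(t-s)}\nrm{u_1-u_2}_\beta$ for $s\le t$, a direct computation gives
\begin{eqnarray*}
e^{-\beta t}\nrm{F_\la(u_1)(t)-F_\la(u_2)(t)}
&\le& \frac{1}{(1-\la\om)\la}\int_0^t \nres{t-s}e^{-\beta(t-s)}\,ds\,\nrm{u_1-u_2}_\beta \\
&\le& \frac{1}{(1-\la\om)(1+\la\beta)}\nrm{u_1-u_2}_\beta.
\end{eqnarray*}
Choosing $\beta$ with $(1-\la\om)(1+\la\beta)>1$ (trivially any $\beta\ge 0$ when $\om<0$, any $\beta>0$ when $\om=0$, and any $\beta>\om/(1-\la\om)$ when $0<\om<1/\la$) makes $F_\la$ a strict contraction, and Banach's theorem produces a unique fixpoint in $C([0,T],X)$. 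Since $T>0$ was arbitrary and the restriction of the fixpoint on $[0,T']$ to $[0,T]\subset[0,T']$ is again a fixpoint, uniqueness patches these together to a fixpoint on all of $\rep$.

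The delicate point is the case $\om>0$, where $1/(1-\la\om)>1$ means $\Jlw(t)$ strictly expands distances, so a naive sup-norm contraction argument only works on short subintervals and would force an iterative gluing. The Bielecki weight elegantly absorbs this expansion against the exponential decay of the Yosida kernel and uses the hypothesis $\la<1/\btr{\om}$ exactly once, at the step where $1-\la\om$ must be positive.
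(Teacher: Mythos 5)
Your proof is correct, and it finishes the argument by a genuinely different (though closely related) mechanism than the paper. Both proofs rest on the same core estimate
$$
\nrm{F_{\la}u(t)-F_{\la}v(t)}\le \frac{1}{\la(1-\la\om)}\int_0^t e^{-\frac{t-\tau}{\la}}\nrm{u(\tau)-v(\tau)}\,d\tau ,
$$
but the paper majorizes the right-hand side by the Volterra operator $S$ on $C[0,T]$, observes that $S$ is quasi-nilpotent, and concludes that some iterate $F_{\la}^n$ is a strict contraction, invoking the ``contractive iterate'' version of Banach's theorem. You instead renorm $C([0,T],X)$ with the Bielecki weight $e^{-\beta t}$ so that $F_{\la}$ itself becomes a strict contraction in one step; your computation of the contraction constant $\frac{1}{(1-\la\om)(1+\la\beta)}$ and the choice of $\beta$ in the three regimes of $\om$ are correct, and the hypothesis $\la<1/\btr{\om}$ enters exactly where you say it does. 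The two devices are classically interchangeable (quasi-nilpotency of the Volterra majorant is precisely what makes the Bielecki renorming succeed), so neither buys more generality; your version is arguably more self-contained since it avoids appealing to spectral radius facts, and it makes the role of $\om>0$ explicit. One further point in your favour: you address the self-map property of $F_{\la}$, i.e.\ the continuity of $t\mapsto \Jlw(t)y_u(t)$, noting that this requires the $t$-continuity of the resolvent supplied by Assumption \ref{general-IVE1} or \ref{general-IVE2} via Remark \ref{perturbed_control}; the paper's proof passes over this in silence even though its contraction argument equally presupposes that $F_{\la}$ maps $C([0,T],X)$ into itself.
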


\begin{proof} Defining
$$
\Funk{S}{C[0,T]}{C[0,T]}{u}{\bk{t\mapsto \frac{1}{\la(1-\la\om)}\int_0^t\nres{t-\tau}u(\tau)d\tau},}
$$
for $F_{\la},$ we have,
\begin{eqnarray*}
\lefteqn{\nrm{F_{\la}u(t)-F_{\la}v(t)} } \\
&=&\nrm{\Jlw(t)\fk{ \nres{t} x_0+\frac{1}{\la} \int_0^t\nres{\tau}u(t-\tau)d\tau} 
- \Jlw(t)\fk{ \nres{t} x_0+\frac{1}{\la} \int_0^t\nres{\tau}v(t-\tau)d\tau}}, \\
&\le&\frac{1}{\la(1-\la\om)}\int_0^t\nres{t-\tau}\nrm{u(\tau)-v(\tau}d{\tau}, \\
&\le& \nri{S\bk{t\mapsto\nrm{u(t)-v(t)}}}.
\end{eqnarray*}
As we may take the sup on the left hand side , the fact that $S$ is quasi-nilpotent serves for a strict contraction $F_{\la}^n$ for some $n,$ which leads to a fixpoint of $F_{\la}.$
\end{proof}

\begin{lem} \label{IVE2-half-line-boundedness-lemma}
Let $I=\rep,$ Assumption \ref{general-IVA0} and \ref{general-IVE2} be fulfiled, $T>t>0,$ and $\ul$ the solution to (\ref{int_half-line-Yosida-Approx}). Further, let $u_0 \in \hat{D},$ for all $t\in[0,T] $, i.e. $\nrm{A_{\la}(t)u_0}\le C_1,$ for all $0< \la < 1/\btr{\om},$ and $0\le t\le T,$ then
\begin{enumerate}
\item $\ul(t)$ is uniformly bounded for $0< \la < 1/\btr{\om},$ and $0\le t\le T.$ Moreover,
$$
\nrm{\ul(t)-u_0}\le K^{\prime}(t+\la),
$$ 
for some constant $K^{\prime} \ge 0.$
\item Let $L_g$ be the Lipschitz-constant for the control function $g$. Then, for $0< \la \le c < \frac{1}{\btr{\om}+L_g}$ the family
$$ D_c:=\bk{\ul:[0,T]\to X:0<\la<c},$$ 
is equi-Lipschitz.Consequently, the Yosida approximation
$$
\frac{1}{\la}\fk{\ul(t)-u_0-\frac{1}{\la}\int_0^t\nres{\tau}(\ul(t-\tau)-u_0)d\tau}
$$
is uniformly bounded for $0< \la \le c,$ and $0\le t\le T.$ 
\end{enumerate}
\end{lem}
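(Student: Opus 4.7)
The plan is to exploit the fixpoint representation $\ul(t) = \Jlw(t) w_\la(t)$ with $w_\la(t) := \nres{t} u_0 + \tfrac{1}{\la}\int_0^t \nres{t-s}\ul(s)\,ds$, provided by the preceding lemma, together with the contractivity and $t$-stability estimates for $\Jlw(t)$ from Remark~\ref{perturbed_control}. For (1) I split $\ul(t)-u_0 = \bigl(\Jlw(t)w_\la(t) - \Jlw(t)u_0\bigr) + \bigl(\Jlw(t)u_0 - u_0\bigr)$. The second summand equals $\la\,\Alw(t) u_0$ and has norm bounded by some $C$ uniformly in $(\la,t)$ via $u_0\in\hat D$ and Remark~\ref{perturbed_control}(2); for the first, using that $\Jlw(t)$ is $\tfrac{1}{1-\la\om}$-Lipschitz and the identity $\nres{t} + \tfrac{1}{\la}\int_0^t \nres{t-s}\,ds = 1$, I obtain $\nrm{\Jlw(t)w_\la(t) - \Jlw(t)u_0} \le \tfrac{1}{\la(1-\la\om)}\int_0^t \nres{t-s}\nrm{\ul(s)-u_0}\,ds$. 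Setting $\vp(t) := \nrm{\ul(t)-u_0}$ I reach the Volterra inequality
\[
\vp(t) \le \la C + \frac{1}{\la(1-\la\om)}\int_0^t \nres{t-s}\vp(s)\,ds,
\]
whose equality problem reduces by differentiation of the convolution term to the linear ODE $\vp' = \tfrac{\om}{1-\la\om}\vp + C$, $\vp(0) = \la C$. Its explicit solution is bounded on $[0,T]$ by $K'(t+\la)$ for some $K'$ independent of $\la$, and the comparison principle for Volterra inequalities yields (1) together with a uniform bound $\nrm{\ul(t)} \le M$.

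For (2) the cornerstone is the identity
\[
w_\la'(t) = \frac{1}{\la}(\ul(t)-w_\la(t)) = \Alw(t)\, w_\la(t) = \fk{\tfrac{d}{dt}}_\la \ul(t) =: z_\la(t),
\]
obtained by direct differentiation of the defining integral for $w_\la$; thus $w_\la$ is $C^1$ and the Yosida approximation of the derivative is the classical derivative of the smoothed function $w_\la$, in particular continuous on $[0,T]$. Applying the perturbed stability inequality (\ref{resolvent_t_stability}) to the pairs $(\ul(t_i), z_\la(t_i))$, which lie in the graph of $A(t_i) + \om I$, and using $\ul(t_i) - \la z_\la(t_i) = w_\la(t_i)$, yields
\[
(1-\la\om)\nrm{\ul(t_1)-\ul(t_2)} \le \nrm{w_\la(t_1)-w_\la(t_2)} + \la L_{h^\om}|t_1-t_2|\Lw(M) + \la L_g|t_1-t_2|\nrm{z_\la(t_2)},
\]
with $L_{h^\om}$ and $L_g$ the Lipschitz constants of $\hw$ and $g$. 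Combined with $\nrm{w_\la(t_1)-w_\la(t_2)}\le|t_1-t_2|\sup_{[t_2,t_1]}\nrm{z_\la}$, this bounds the local Lipschitz constant of $\ul$ in terms of local values of $\nrm{z_\la}$.

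To close the circle I bound $\nrm{z_\la}$ back. Integrating $w_\la' = z_\la$ with $w_\la(0) = u_0$ gives the exact identity $\ul(t) - u_0 = \la z_\la(t) + \int_0^t z_\la(s)\,ds$, valid without any differentiability of $\ul$; rearranging at $t+h$ for $0 < h \le \la$ produces
\[
z_\la(t+h) = (1 - h/\la)\,z_\la(t) + (h/\la)\Bigl[\tfrac{\ul(t+h)-\ul(t)}{h} + z_\la(t) - \tfrac{1}{h}\!\int_t^{t+h}\! z_\la(s)\,ds\Bigr].
\]
Substituting the local Lipschitz estimate into the bracket and absorbing the term $z_\la(t) - \tfrac{1}{h}\int_t^{t+h}z_\la$ by the modulus of continuity of the continuous function $z_\la$, mesh-refined iteration produces a discrete Gronwall whose continuous limit is the scalar comparison ODE $\zeta'(t) \le \tfrac{\om+L_g}{1-\la\om}\zeta(t) + \tfrac{L_{h^\om}\Lw(M)}{1-\la\om}$ with $\zeta(0) = \nrm{\Alw(0) u_0}$. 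Both coefficients stay uniformly bounded for $\la\in(0,c]$ by the hypothesis $c < 1/(\btr{\om}+L_g)$, and the initial value is uniformly bounded by $u_0\in\hat D$; the explicit solution therefore gives a uniform bound $\sup_{[0,T]}\nrm{z_\la}\le Z_*$, the asserted uniform boundedness of $\fk{\tfrac{d}{dt}}_\la \ul$, and feeding it back into the local Lipschitz estimate delivers the uniform Lipschitz constant of $D_c$. The main obstacle is the passage from the discrete identity to the continuous comparison ODE: in a general Banach space $\ul$ need not be differentiable almost everywhere, so the formal ODE $\la z_\la' + z_\la = \ul'$ has no classical meaning, and the mesh-refined discrete iteration just described bypasses this by relying only on the exact integral identity $\ul - u_0 = \la z_\la + \int_0^\cdot z_\la$ and the continuity of $z_\la$ on $[0,T]$.
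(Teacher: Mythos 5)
Your part (1) is the paper's own argument almost verbatim: the split $\ul(t)-u_0=(\Jlw(t)w_\la(t)-\Jlw(t)u_0)+(\Jlw(t)u_0-u_0)$, the $\tfrac{1}{1-\la\om}$-contractivity of $\Jlw(t)$, and the scalar Volterra comparison (Lemma~\ref{s-t-integral-inequality}) giving $\nrm{\ul(t)-u_0}\le K'(t+\la)$.

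For part (2) you take a genuinely different and, in my view, cleaner route. The paper never works with the pointwise Yosida quotient directly; it introduces the help functions $K_\la(t)=\limsup_{s\to 0+}\tfrac{1}{s}\nrm{\ul(t-s)-\ul(t)}$ and $K_\la^{\sup}$, must first prove their measurability (via a countable dense net of rationals) and qualitative boundedness (a preliminary Gronwall on $e^{t/\la}\tfrac{1}{s}\nrm{\ul(t-s)-\ul(t)}$), then passes to the $\limsup$ through Fatou's lemma, applies the scalar resolvent Lemma~\ref{s-t-integral-inequality} twice, absorbs the term $\la L_g K_\la^{\sup}(t)$ into the left side (this is where $\la\le c<1/(\btr{\om}+L_g)$ enters), and finally invokes Proposition~\ref{bd-limsup-Lipschitz} to convert the one-sided difference-quotient bound into a genuine Lipschitz constant. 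You instead observe that $z_\la=(\tfrac{d}{dt})_\la\ul$ is literally the classical derivative of the $C^1$ smoothing $w_\la$, which is continuous, and run an exact forward recursion $z_\la(t+h)=(1-h/\la)z_\la(t)+\dots$ whose per-step error is controlled by the modulus of continuity of $z_\la$ (a qualitative, $\la$-dependent quantity that disappears in the mesh limit), closing the loop with a discrete Gronwall whose coefficients $\tfrac{\om+L_g}{1-\la\om}$ and $\tfrac{L_{h^\om}L^\om(M)}{1-\la\om}$ and initial datum $\nrm{A_\la^\om(0)u_0}$ are uniform in $\la\le c$. I checked the algebraic identities ($w_\la'=z_\la$, $\ul-\la z_\la=w_\la$, the recursion) and the error accumulation $O(\tfrac{T}{\la}\omega_{z_\la}(h))\to 0$; the argument is sound. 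What your route buys is the complete avoidance of the measurability/Fatou/\ref{bd-limsup-Lipschitz} machinery; moreover, since your $L_g$-term sits at the left mesh point it feeds the growth rate rather than requiring absorption, so you do not strictly need $\la<1/(\btr{\om}+L_g)$ for the bound itself, only $1-\la\om$ bounded away from $0$. The only place I would ask for more detail in a final write-up is the explicit statement and summation of the discrete Gronwall, since that is where the uniformity in $\la$ is actually harvested.
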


\begin{proof}
For the proof of the boundedness of $\ul(t)$ we show the boundedness of $\nrm{\ul(t)-u_0}.$
Let $u_0\in\hat{D}$ and $\nrm{\Jlw(t)u_0-u_0}\le \la C_1.$
\begin{eqnarray*}
\nrm{\ul(t)-u_0}
&=&\nrm{\Jlw(t)\fk{\nres{t}u_0+\frac{1}{\la} \int_0^{t}\nres{\tau}\ul(t-\tau)d\tau}-u_0}, \\
&\le& \nrm{\Jlw(t)\fk{\nres{t}u_0+\frac{1}{\la} \int_0^{t}\nres{\tau}\ul(t-\tau)d\tau}-\Jlw(t)u_0} \\
&&+\nrm{\Jlw(t)u_0-u_0},\\
&\le & \frac{1}{\la(1-\la\om)}\int_0^t\nres{\tau}\nrm{\ul(t-\tau)-u_0}d\tau +\la C_1.
\end{eqnarray*}

Thus, by \ref{s-t-integral-inequality}, we have,
\begin{eqnarray} \label{ul-bd-if-w-less-0}
\nrm{\ul(t)-u_0}
&\le& \la C_1 +\frac{1}{1-\la \om}\int_0^t \exp\fk{\frac{\om}{1-\la\om} \tau}C_1 d\tau \\
&\le & K^{\prime}(\la + t) \nonumber
\end{eqnarray}

The proof that $\ul$ is uniformly Lipschitz splits into four steps.
The first step is the derivation of the preliminary inequality for $\nrm{\ul(t-s)-\ul(t)}.$
We start with the definition of $\ul$ as a solution to (\ref{half-line-Yosida-Approx}). Thus, by the inequality (\ref{resolvent_t_stability}) we find with the half line Yosida approximation
\begin{eqnarray*} 
\lefteqn{\nrm{\ul(t-s)-\ul(t)}} \\
&\le& \frac{1}{1-\la\om}\nrm{\ul(t-s)-\ul(t)-\la\bk{\fk{\frac{d}{dt}}_{\la}\ul(t-s)-\fk{\frac{d}{dt}}_{\la}\ul(t)}} \\
&&+\frac{\la}{1-\la\om} \nrm{h^{\om}(t-s)-h^{\om}(t)}L^{\om}(\nrm{\ul(t)}) \\
&&+\frac{\la}{1-\la\om}\nrm{g(t-s)-g(t)} \nrm{\frac{1}{\la}\fk{\ul(t)-u_0-\int_0^t\nresl{\tau}(\ul(t-\tau)-u_0)d\tau}},\\
&\le& \frac{1}{1-\la\om} 
\nrm{\nres{t-s}u_0+\int_0^{t-s}\nresl{\tau}u(t-s-\tau)d\tau
   - \nres{t}u_0-  \int_0^{t}  \nresl{\tau}u(t-\tau)d\tau}  \\
&&+\frac{\la}{1-\la\om} \nrm{h^{\om}(t-s)-h^{\om}(t)}L^{\om}(\nrm{\ul(t)}) \\
&&+\frac{\la}{1-\la\om}\nrm{g(t-s)-g(t)} \nrm{\frac{1}{\la}\fk{\ul(t)-u_0-\int_0^t\nresl{\tau}(\ul(t-\tau)-u_0)d\tau}},\\
&\le&\frac{1}{\la(1-\la\om)}\int_0^{t-s}\nres{\tau}\nrm{\ul(t-s-\tau)-\ul(t-\tau)}d\tau \\
&&+\frac{1}{\la (1-\la\om)}\int_{t-s}^t\nres{\tau}\nrm{\ul(t-\tau)-u_0}d\tau  \\
&&+\frac{\la}{1-\la\om} \nrm{h^{\om}(t-s)-h^{\om}(t)}L^{\om}(\nrm{\ul(t)})\\
&&+\frac{\la}{1-\la\om}\nrm{g(t-s)-g(t)} \nrm{\frac{1}{\la}\fk{\ul(t)-u_0-\int_0^t\nresl{\tau}(\ul(t-\tau)-u_0)d\tau}}.
\end{eqnarray*}

The second step is to prove the boundedness and integrability of the help functions
\begin{eqnarray*}
K_{\la}(t)&:=& \limsup_{s\to 0+}\frac{1}{s}\nrm{\ul(t-s)-\ul(t)}\\
K^{\sup}_{\la}(t)&:=&\sup_{0<\al<t}K_{\la}(\al).
\end{eqnarray*}
Therefore, we fix $\la>0,$ and note the estimates 
$$\nrm{\frac{1}{\la}\fk{\ul(t)-u_0-\int_0^t\nresl{\tau}(\ul(t-\tau)-u_0)d\tau}}\le \frac{2}{\la}\nri{\ul-u_0},$$
and
\begin{eqnarray} \label{t-s-to-t-integral-ul-lipschitz}
\frac{1}{s\la}\int_{t-s}^t\nres{\tau}\nrm{\ul(t-\tau)-u_0}d\tau
&\le& \frac{K^{\prime}}{\la s} \int_{t-s}^t\nres{\tau}(\la+(t-\tau))d\tau, \nonumber \\
&=&  \frac{K^{\prime}}{\la s} \int_0^s\nres{t-\tau}(\la+\tau)d\tau, \nonumber\\
&=& K^{\prime}\frac{\nres{t}}{s\la}\fk{\la\int_0^se^{\frac{\tau}{\la}}d\tau
+\la se^{\frac{s}{\la}} -\la\int_0^se^{\frac{\tau}{\la}}d\tau}, \nonumber \\
&=&K^{\prime}\nres{t-s}.
\end{eqnarray}
Thus, the first step lead to,
\begin{eqnarray*}
\lefteqn{\frac{\exp(t/\la)}{s}\nrm{\ul(t-s)-\ul(t)}}\\
&\le&\frac{1}{\la(1-\la\om)}\int_s^t\frac{\exp(\tau/\la)}{s}\nrm{\ul(\tau-s)-\ul(\tau)}d\tau+C_1^{\la}.\\
\end{eqnarray*}
The well known Gronwall/Bellmann inequality gives
\begin{eqnarray*}
\frac{\exp(t/\la)}{s}\nrm{\ul(t-s)-\ul(t)}&\le&C_1^{\la}\exp\fk{\frac{1}{\la(1-\la\om)}t},
\end{eqnarray*}
which proves boundedness of the help functions. Next we prove the measurability. 
As 
$$
\limsup_{s\to 0+}\frac{1}{s}\nrm{u_{\la}(t-s)-\ul(t)} =\inf_{s>0}\sup_{0<\sigma<s}\frac{1}{\sigma}\nrm{u_{\la}(t-\sigma)-\ul(t)},
$$
we define,  
$$
\bk{q_m}_{m \in\za}:=(0,T]\cap\rat. 
$$
By the continuity of $\ul$ and the norm, we can define the obviously measurable sequence of functions:
$$
\Funk{f_n}{[0,T]}{\rep}{t}{\sup_{q_m<\frac{1}{n}}\frac{1}{q_m}\nrm{\ul(t-q_m)-\ul(t)}.}
$$
Again, the continuity gives,
$$
\inf_{s>0}\sup_{0<\sigma<s}\frac{1}{\sigma}\nrm{\ul(t-\sigma)-\ul(t)} = \inf_{n\in\za}\sup_{q_m<\frac{1}{n}}\frac{1}{q_m}\nrm{\ul(t-q_m)-\ul(t)}
$$
which proves the measurability of $K_\la(\cdot).$ Due to the monotonicity of $K_{\la}^{\sup}$ its measureablility is straightforward. The boundedness of the help functions serves for the local integrability, which closes step 2.

In step 3, we will refine the estimates for $K_{\la}(t),$  and $K_{\la}^{\sup}(t).$
To estimate 
$$
\frac{1}{\la}\fk{\ul(t)-u_0-\int_0^t\nresl{\tau}(\ul(t-\tau)-u_0)d\tau},
$$
we start with $t=0.$ By the first item of the Lemma, we have $\nrm{\frac{1}{\la}(\ul(0)-u_0)} \le \frac{1}{\la}K^{\prime}(\la+0) \le K^{\prime}.$  Due to step 2 we have for $0<s<t,$
$\frac{1}{s}\nrm{\ul(t-s)-\ul(t)}$ is bounded for fixed $\la>0.$
We have,
\begin{eqnarray*}
\frac{1}{\la}\nrm{\ul(t)-u_0-\frac{1}{\la}\int_0^t\nres{\tau}(\ul(t-\tau)-u_0)d\tau}
&\le& \frac{1}{\la^2}\int_0^t\nres{\tau}\nrm{\ul(t-\tau)-\ul(t)}d\tau \\
&&+ \frac{1}{\la}\nres{t}\nrm{\ul(t)-u_0}.
\end{eqnarray*}
The second term is quite simple to estimate. Note,

\begin{eqnarray*}
\frac{1}{\la}\nres{t}\nrm{\ul(t)-u_0}
&\le& \frac{1}{\la}\nres{t}K^{\prime}(\la+t ) ,\\
&\le & K^{\prime}\nres{t} +K^{\prime}\frac{t}{\la}\nres{t}, \\
&\le& C^{\prime}.
\end{eqnarray*}
Appling Proposition \ref{bd-limsup-Lipschitz} for given $t>0,$ we have
$$
\sup_{0<s<t}\frac{1}{s}\nrm{\ul(t-s)-\ul(t)} \le K^{\sup}_{\la}(t).
$$

Hence, for the integral we obtain
\begin{eqnarray*}
\lefteqn{\frac{1}{\la^2}\int_0^t\nres{\tau}\nrm{\ul(t-\tau)-\ul(t)}d\tau} \\
&\le& \frac{1}{\la^2}\int_0^{t}\nres{\tau}\tau d\tau \sup_{0<s<t}\frac{1}{s}\nrm{\ul(t-s)-\ul(t)}d\tau, \\ 
&\le& K^{\sup}_{\la}(t). 
\end{eqnarray*}

We conclude
\begin{eqnarray*}
\frac{1}{\la}\nrm{\ul(t)-u_0-\frac{1}{\la}\int_0^t\nres{\tau}(\ul(t-\tau)-u_0)d\tau}
&\le& K^{\sup}_{\la}(t)+C^{\prime}.
\end{eqnarray*}

Let $L_g,L_h$ be the corresponding Lipschitz constants for $g,h^{\om},$ $L(\nrm{\ul(t)})\le C_u,$ and $K$ from the previous inequality. The inequality derived in step 1 for small $\la>0,$ and $s\le t\le T$  reduces to

\begin{eqnarray} \label{K_lambda_ineq_1}
\lefteqn{(1-\la\om)\frac{1}{s}\nrm{\ul(t-s)-\ul(t)}} \nonumber \\
&\le& \frac{1}{\la}\int_0^{t-s}\nres{\tau}\frac{1}{s}\nrm{\ul(t-s-\tau)-\ul(t-\tau)}d\tau 
+K^{\prime}\exp\fk{\frac{s-t}{\la}} \nonumber \\
&&+\la (L_hC_u+L_gK+L_gC^{\prime}) +\la L_g K^{\sup}_{\la}(t). 
\end{eqnarray}

Applying $\limsup_{s\to 0+}$ on both sides of (\ref{K_lambda_ineq_1}), we can apply Fatou's Lemma to the integral, and obtain for $K_{\la}(t)$ the inequality

\begin{eqnarray*}
(1-\la\om)K_{\la}(t) &\le& \frac{1}{\la}\int_0^t\nres{\tau}K_{\la}(t-\tau)d\tau \\
&&+K^{\prime}\nres{t} +\la C_2+\la L_gK_{\la}^{\sup}(t).
\end{eqnarray*}

Therefore, we are in the situation to apply Lemma \ref{s-t-integral-inequality}. This leads to,
\begin{eqnarray*} 
\lefteqn{K_{\la}(t)}  \\
&\le & \frac{1}{1-\la\om} K^{\prime}\exp(-\frac{t}{\la}) +\la C_2 +\la L_gK_{\la}^{\sup}(t)  \\
&&+\frac{1}{\la(1-\la\om)}\int_0^t\exp\fk{\frac{\om}{1-\la\om}(t-\tau)}\fk{K^{\prime}\exp(-\frac{\tau}{\la}) +\la(C_2+L_g K_{\la}^{\sup}(\tau))}d\tau.
\end{eqnarray*}

Note,
\begin{eqnarray*}
\int_0^t\exp\fk{\frac{\om}{1-\la\om}(t-\tau)}K^{\prime}\exp(-\frac{\tau}{\la})d\tau
&\le & C_3(1-\la\om)\la.
\end{eqnarray*}

Thus, we have for a constant $C_3,$

\begin{eqnarray*}
\lefteqn{K_{\la}(t)}  \\
& \le & \frac{1}{1-\la\om} \fk{C_3 +\la L_g K_{\la}^{\sup}(t)} \\
&& +\frac{L_g}{1-\la\om}\int_0^t \exp \fk{ \frac{\om}{1-\la\om} (\tau) } K_{\la}^{\sup}(t-\tau)d\tau.%
\end{eqnarray*}

Due to the positivity and monotonicity of $K_{\la}^{\sup}$ the right hand side is monotone increasing. We obtain for 
$\la <\frac{1}{\btr{\om}+L_g},$
\begin{eqnarray*}
\lefteqn{K_{\la}^{\sup}(t)} \\
&\le & \frac{1}{1-\la(\om+L_g)}C_3  \\
&&+\frac{L_g}{1-\la(\om+L_g)}\int_0^t\exp\fk{\frac{\om}{1-\la\om}(t-\tau)}K_{\la}^{\sup}(\tau)d\tau.
\end{eqnarray*}
Again an application of Lemma \ref{s-t-integral-inequality} gives
for 
\begin{eqnarray*}
\gamma_{\la}&:=&\frac{\om+L_g-\la\om(\om+2L_g)}{(1-\la\om)(1-\la(\om+L_g)}\\
&\lra&L_g+\omega,
\end{eqnarray*}
the inequality
\begin{eqnarray} \label{w-less-L_g-equiLipschitz}
\lefteqn{K_{\la}^{\sup}(t)} \nonumber \\
&\le&\frac{1}{1-\la(\om+L_g)}C_3 +\frac{L_g}{1-\la(\om+L_g)}\int_0^t\exp(\gamma_{\la}(t-\tau))C_3d\tau.
\end{eqnarray}
This implies that $K_{\la}^{\sup}(t)$ is uniformly bounded for small  $\la >0,$ more precisly for
$$
0<\la\le c < \frac{1}{\btr{\om}+L_g}, 
$$
$$
D_c:=\bk{\fk{\frac{d}{dt}}_{\la}\ul(t):t\in[0,T],0<\la\le c }
$$ 
is bounded.
\end{proof}

\begin{cor} \label{corollary-ul-bd-if-w-less-0}
Let $I=\rep,$ Assumption \ref{general-IVA0} and Assumption \ref{general-IVE1} or \ref{general-IVE2} be fulfilled,  $u_0\in \hat{D}$ and $\ul$ the solution to (\ref{int_half-line-Yosida-Approx}). If $\om <0,$ then $\ul $ is uniformly bounded for $0<\la<\ 1/\btr{\om}.$
\end{cor}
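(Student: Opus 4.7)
The plan is to revisit the integral inequality (\ref{ul-bd-if-w-less-0}) from the proof of Lemma \ref{IVE2-half-line-boundedness-lemma} and to exploit the sign of $\om$. The derivation of (\ref{ul-bd-if-w-less-0}) starts from the fixpoint identity
$$
\ul(t)=\Jlw(t)\bk{\nres{t}u_0+\frac{1}{\la}\int_0^t\nres{\tau}\ul(t-\tau)d\tau}
$$
and invokes only the resolvent stability from Remark \ref{perturbed_control} together with $u_0\in\hat D,$ which gives $\nrm{\Jlw(t)u_0-u_0}\le \la C_1.$ That resolvent stability is available both under Assumption \ref{general-IVE1} (with $g=0$) and under Assumption \ref{general-IVE2}, so the same integral inequality
$$
\nrm{\ul(t)-u_0}\le \la C_1+\frac{1}{\la(1-\la\om)}\int_0^t\nres{\tau}\nrm{\ul(t-\tau)-u_0}d\tau
$$
holds in both cases, and Lemma \ref{s-t-integral-inequality} converts it into
$$
\nrm{\ul(t)-u_0}\le \la C_1+\frac{C_1}{1-\la\om}\int_0^t\exp\fk{\frac{\om}{1-\la\om}\tau}d\tau.
$$

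The only new input is the sign of $\om.$ Since $\om<0$ and $1-\la\om=1+\la\btr{\om}>0,$ the exponent $\om/(1-\la\om)$ is strictly negative, so
$$
\int_0^t\exp\fk{\frac{\om}{1-\la\om}\tau}d\tau\le \frac{1-\la\om}{\btr{\om}},
$$
uniformly in $t.$ Substituting yields $\nrm{\ul(t)-u_0}\le \la C_1+C_1/\btr{\om},$ and restricting to $0<\la<1/\btr{\om}$ forces $\la C_1<C_1/\btr{\om},$ hence
$$
\nrm{\ul(t)-u_0}\le \frac{2C_1}{\btr{\om}},
$$
uniformly in both $t\ge 0$ and $\la\in(0,1/\btr{\om}).$

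The key (and only) observation is that when $\om<0$ the exponential factor $\exp(\om\tau/(1-\la\om))$ decays in $\tau,$ causing the integral in (\ref{ul-bd-if-w-less-0}) to saturate rather than grow linearly in $t$ as in the general estimate $K'(\la+t)$ of Lemma \ref{IVE2-half-line-boundedness-lemma}. No further work is required; the corollary is essentially a sign-of-$\om$ sharpening of the bound already established, and therefore no real obstacle arises.
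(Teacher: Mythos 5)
Your proof is correct and is exactly the paper's argument: the paper's proof of this corollary is literally ``apply inequality (\ref{ul-bd-if-w-less-0})'', and you have spelled out precisely why that inequality (which indeed uses only the $\frac{1}{1-\la\om}$-contractivity of $\Jlw(t)$ and $u_0\in\hat D$, hence holds under either assumption) saturates to a $t$-independent bound once $\om<0$ makes the exponent $\om/(1-\la\om)$ negative. Nothing is missing.
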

\begin{proof} Apply the inequality (\ref{ul-bd-if-w-less-0})\end{proof}

\begin{cor}
Let $I=\rep,$ Assumption \ref{general-IVA0} and Assumption \ref{general-IVE2} with $0\le L_g<-\om$ be fulfilled, and $\ul$ a solution to (\ref{int_half-line-Yosida-Approx}) with $u_0\in\hat{D}$ on $I$, then $\ul$ is uniformly equi-Lipschitz on $I$.
\end{cor}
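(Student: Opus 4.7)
The plan is to revisit the estimate \eqref{w-less-L_g-equiLipschitz} obtained in the course of proving Lemma \ref{IVE2-half-line-boundedness-lemma}, and to observe that, under the stronger hypothesis $L_g<-\om$, its right-hand side is bounded uniformly in $t\in\rep$, not just in $t\in[0,T]$. Concretely, reading off from step 3 of the lemma's proof, one has for every $T>0$ and every $t\in[0,T]$, provided $0<\la\le c<1/(\btr{\om}+L_g)$,
\begin{eqnarray*}
K_{\la}^{\sup}(t) &\le& \frac{C_3}{1-\la(\om+L_g)} +\frac{L_g C_3}{1-\la(\om+L_g)}\int_0^t e^{\gamma_{\la}(t-\tau)}\,d\tau,
\end{eqnarray*}
where $\gamma_{\la}\to \om+L_g$ as $\la\to 0+$, and the constants $C_3,K^{\prime}$ only depend on $u_0$ and on bounds of $g,h,L$, not on $T$.

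Next I would exploit the new sign hypothesis. Since $\om+L_g<0$, for $c>0$ small enough one has $\gamma_{\la}\le \tfrac{1}{2}(\om+L_g)<0$ for all $\la\in(0,c]$, so
\begin{eqnarray*}
\int_0^t e^{\gamma_{\la}(t-\tau)}\,d\tau &=& \frac{1-e^{\gamma_{\la} t}}{\btr{\gamma_{\la}}} \ \le\ \frac{1}{\btr{\gamma_{\la}}} \ \le\ \frac{2}{\btr{\om+L_g}}
\end{eqnarray*}
uniformly in $t\in\rep$. Plugging this back in, $K_{\la}^{\sup}(t)\le M$ with $M$ independent of both $t$ and $\la\in(0,c]$. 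Since the estimate holds for every $T$ with the same $M$, the bound extends to all of $\rep$.

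Finally, to translate the bound on $K_{\la}^{\sup}$ into equi-Lipschitz continuity of $\ul$ on $\rep$, I would invoke Proposition \ref{bd-limsup-Lipschitz}: the continuous function $\ul$ satisfies $\limsup_{s\to 0+}\tfrac{1}{s}\nrm{\ul(t-s)-\ul(t)}=K_{\la}(t)\le K_{\la}^{\sup}(t)\le M$ for every $t$, hence $\ul$ is $M$-Lipschitz on $\rep$ with the same constant for every $\la\in(0,c]$.

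The only real subtlety — and the point where a careless reading of Lemma \ref{IVE2-half-line-boundedness-lemma} could mislead — is verifying that the constants $C_1,C_2,C_3,K^{\prime}$ appearing in its proof genuinely do not grow with $T$; this is immediate by inspection since each of them arises only from the data $\nri{h^{\om}},\nri{g},L_h,L_g,L(\nrm{\ul})$ and from $u_0\in\hat{D}$, none of which involve $T$. The $T$-dependence of the bound in the lemma is located exclusively in the factor $e^{\gamma_{\la} T}$, and this is precisely what the assumption $L_g<-\om$ removes.
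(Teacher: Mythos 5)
Your overall route is the same as the paper's: reuse inequality (\ref{w-less-L_g-equiLipschitz}), observe that $\gamma_{\la}\to\om+L_g<0$ so the convolution with $e^{\gamma_{\la}(t-\tau)}$ is bounded uniformly in $t\in\rep$, and then pass from the bound on $K_{\la}^{\sup}$ to a uniform Lipschitz constant via Proposition \ref{bd-limsup-Lipschitz}. The one place where your argument has a genuine hole is precisely the point you flag and then dismiss: the claim that the constants $C_2,C_3$ are $T$-independent ``by inspection'' because they depend only on $\nri{h^{\om}},\nri{g},L_h,L_g$ and $L(\nrm{\ul})$. The last of these quantities is not a $T$-free datum. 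Lemma \ref{IVE2-half-line-boundedness-lemma} is proved without any sign condition on $\om$, and the only bound on the approximants it provides is $\nrm{\ul(t)-u_0}\le K^{\prime}(t+\la)$, which grows linearly in $t$; since $L$ is merely continuous and monotone non-decreasing (and may be unbounded), the constant $C_u$ with $L(\nrm{\ul(t)})\le C_u$ used inside that proof does in general grow with $T$, and $C_2,C_3$ inherit this growth.

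The missing step is therefore to establish first that $\nrm{\ul(t)}$ is bounded uniformly in $t\in\rep$ and $0<\la<1/\btr{\om}$. This is exactly what the paper inserts before invoking (\ref{w-less-L_g-equiLipschitz}): since $0\le L_g<-\om$ forces $\om<0$, inequality (\ref{ul-bd-if-w-less-0}) gives $\nrm{\ul(t)-u_0}\le \la C_1+\frac{1}{1-\la\om}\int_0^t\exp\bigl(\frac{\om\tau}{1-\la\om}\bigr)C_1\,d\tau\le \la C_1+\frac{C_1}{\btr{\om}}$, which is Corollary \ref{corollary-ul-bd-if-w-less-0}. With that in hand, $C_u$ and hence $C_3$ are genuinely independent of $T$, and the remainder of your argument (the bound $\int_0^te^{\gamma_{\la}(t-\tau)}d\tau\le 2/\btr{\om+L_g}$ for $\la$ small, followed by Proposition \ref{bd-limsup-Lipschitz}) goes through and coincides with the paper's proof.
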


\begin{proof}
 From Corollary \ref{corollary-ul-bd-if-w-less-0} we obtain the boundedness of $\ul,$ which implies $L(\nrm{\ul(t)})\le C_u$, consequently we can apply the inequality  (\ref{w-less-L_g-equiLipschitz}).
\end{proof}

\begin{cor} \label{IVE1-half-line-boundeness-cor}
Let $I=[0,T],$ Assumption \ref{general-IVA0} and Assumption \ref{general-IVE1} be fulfilled, $T>t>0,$ and $\ul$ the solution to (\ref{int_half-line-Yosida-Approx}) , then we have:
\begin{enumerate}
\item Let $u_0 \in \hat{D}$  , i.e. $\nrm{A_{\la}(t)u_0}\le C_1,$ for all $0< \la < 1/\btr{\om}$ and $0\le t\le T,$  then $\ul(t)$ is uniformly bounded in $0< \la < 1/\btr{\om},$ and $0\le t\le T.$ Moreover,
$$\nrm{\ul(t)-u_0}\le K^{\prime}(t+\la),$$ for some $K^{\prime}.$
\item 
The family $\bk{\ul:0< \la < 1/\btr{\om}} $ is uniformly equicontinuous on $[0,T].$
\end{enumerate}
\end{cor}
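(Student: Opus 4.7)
The plan is to reduce the statement to the case $g\equiv 0$ of Lemma \ref{IVE2-half-line-boundedness-lemma}, which is exactly Assumption \ref{general-IVE1}. The circular dependence on $K_{\la}^{\sup}$ that forced the restriction $\la<1/(\btr{\om}+L_g)$ and $L_g<-\om$ in that earlier proof arose \emph{solely} from the term $\la\nrm{g(t_1)-g(t_2)}\nrm{y_2}$ in (\ref{resolvent_t_stability}). With $g=0$ that term disappears and the argument collapses to one clean Gronwall step, with no extra restriction on $\om$ required.

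Part (1) is read off directly: the bound $\nrm{\ul(t)-u_0}\le K^{\prime}(t+\la)$ is precisely inequality (\ref{ul-bd-if-w-less-0}) in the proof of Lemma \ref{IVE2-half-line-boundedness-lemma}. First I would write the fixpoint representation of $\ul,$ compare with $\Jlw(t)u_0$ using the $(1-\la\om)^{-1}$-contractivity of $\Jlw(t),$ apply $\nrm{\Jlw(t)u_0-u_0}\le\la C_1$ (from $u_0\in\hat{D}$), and close the resulting convolution inequality via Lemma \ref{s-t-integral-inequality}. Uniform boundedness of $\ul$ on $[0,T]$ follows, and consequently $L(\nrm{\ul(t)})\le C_u$ for some $C_u$ independent of $\la$ and $t.$

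For part (2), I would replicate step 1 of the proof of Lemma \ref{IVE2-half-line-boundedness-lemma} with $g=0.$ Applying (\ref{resolvent_t_stability}) to the pairs $(\ul(r),\fk{\frac{d}{dt}}_{\la}\ul(r)-\om\ul(r))\in A(r)$ at $r=t-s$ and $r=t,$ and expanding via (\ref{defi-Yosida-Approx}), one obtains
\begin{eqnarray*}
(1-\la\om)\nrm{\ul(t-s)-\ul(t)} &\le& \frac{1}{\la}\int_0^{t-s}\nres{\tau}\nrm{\ul(t-s-\tau)-\ul(t-\tau)}d\tau \\
&& +\frac{1}{\la}\int_{t-s}^t\nres{\tau}\nrm{\ul(t-\tau)-u_0}d\tau \\
&& +\la\nrm{h^{\om}(t-s)-h^{\om}(t)}L^{\om}(\nrm{\ul(t)}).
\end{eqnarray*}
Using part (1), the middle integral is $O(s)$ uniformly in $\la$ as in the computation (\ref{t-s-to-t-integral-ul-lipschitz}); the last term is at most $\la\,\omega_h(s)\,L^{\om}(C_u),$ where $\omega_h$ is the modulus of uniform continuity of $h$ (and hence of $h^{\om},$ since $g=0$). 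Both boundary terms therefore tend to $0$ as $s\to 0,$ uniformly in $\la$ on compact subsets of $(0,1/\btr{\om}).$

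Finally, setting $\phi(t):=\nrm{\ul(t-s)-\ul(t)}$ and substituting $\sigma=t-\tau,$ the displayed inequality becomes a Volterra-type estimate
$$\phi(t)\le M_s+\frac{1}{\la(1-\la\om)}\int_s^t\nres{t-\sigma}\phi(\sigma)d\sigma,$$
with $M_s\to 0$ as $s\to 0$ uniformly in $\la.$ A last application of Lemma \ref{s-t-integral-inequality} gives $\phi(t)\le C\,M_s$ uniformly in $t\in[s,T],$ which is exactly the uniform equicontinuity claim. The only delicate point is arranging that $M_s$ decays uniformly in $\la$; precisely because no $g$-term is present, there is no feedback from $K_{\la}^{\sup}$ and this uniformity is automatic.
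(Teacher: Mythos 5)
Your reduction to the $g\equiv 0$ case, your treatment of part (1), and your derivation of the integral inequality for $\nrm{\ul(t-s)-\ul(t)}$ all match the paper. The gap is in the final Gronwall step. You replace the two boundary terms by their supremum $M_s$ and apply Lemma \ref{s-t-integral-inequality} to
$$\phi(t)\le M_s+\frac{1}{\la(1-\la\om)}\int_s^t e^{-\frac{t-\sigma}{\la}}\phi(\sigma)d\sigma,$$
claiming $\phi(t)\le C\,M_s$ with $C$ independent of $\la.$ But the resolvent of this kernel applied to the constant function $1$ is
$$1+\frac{1}{\la(1-\la\om)}\int_s^t\exp\fk{\frac{\om}{1-\la\om}(t-\sigma)}d\sigma=1+\frac{1}{\la\om}\fk{\exp\fk{\frac{\om(t-s)}{1-\la\om}}-1},$$
which is of order $1/\la$ as $\la\to 0+$ (for every sign of $\om$). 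So what you actually obtain is $\phi(t)\le M_s(1+c/\la),$ which does not give equicontinuity uniformly over $0<\la<1/\btr{\om}$ --- and uniformity as $\la\to 0+$ is precisely what the corollary asserts and what the subsequent Cauchy argument needs. Your own caveat ("uniformly in $\la$ on compact subsets of $(0,1/\btr{\om})$") already hints that the estimate degenerates at the endpoint $\la\to 0+.$

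The uniformity cannot come from the smallness of $\sup_\nu f(\nu)$ alone; it comes from the structure of the forcing terms, which is why the paper keeps them as functions of $\nu=t-s$ inside the resolvent. The term $K^{\prime}se^{-\nu/\la}$ convolves with the kernel to give exactly $K^{\prime}s\exp\fk{\frac{\om\nu}{1-\la\om}}$: the decay $e^{-\nu/\la}$ absorbs the $1/\la$ in the kernel, via the identity
$$\frac{1}{\la(1-\la\om)}\int_0^{\nu}e^{\frac{\om\tau}{1-\la\om}}e^{-\frac{\nu-\tau}{\la}}d\tau=\exp\fk{\frac{\om\nu}{1-\la\om}}-e^{-\frac{\nu}{\la}}\le \exp\fk{\frac{\om\nu}{1-\la\om}},$$
which is the computation the paper's proof ends with. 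Likewise the term $\la C_u\nrm{h(\nu)-h(\nu+s)}$ carries an explicit prefactor $\la$ that cancels the $1/\la$ in the kernel, leaving a contribution of order $\omega_h(s)$ uniformly in $\la.$ If you rerun your last step with the genuine inhomogeneity $f(\nu)=\frac{1}{1-\la\om}\fk{K^{\prime}se^{-\nu/\la}+\la C_u\,\omega_h(s)}$ in place of the constant $M_s,$ you recover the paper's proof and the claimed uniform equicontinuity.
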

\begin{proof}
 As $g=0,$ we only have to prove the local uniform equicontinuity of $\ul.$
We have:
\begin{eqnarray} \label{A22-equicontinuity_ineq}
(1-\la\om)\nrm{\ul(t-s)-\ul(t)} 
&\le& \frac{1}{\la}\int_0^{t-s}\nres{\tau}\nrm{\ul(t-s-\tau)-\ul(t-\tau)}d\tau \nonumber\\
&&+\frac{1}{\la}\int_{t-s}^t\nres{\tau}\nrm{\ul(t-\tau)-u_0}d\tau \nonumber \\
&&+\la C_2\nrm{h(t-s)-h(t)} .
\end{eqnarray}
From (\ref{t-s-to-t-integral-ul-lipschitz}) we learn,
$$
\frac{1}{\la}\int_{t-s}^t\nres{\tau}\nrm{\ul(t-\tau)-u_0}d\tau \le s K^{\prime}e^{-\frac{t-s}{\la}}
$$
To apply Lemma \ref{s-t-integral-inequality}, we have to subsitute $\nu:=t-s$ in the integral inequality (\ref{A22-equicontinuity_ineq}), i.e.
\begin{eqnarray*}
(1-\la\om)\nrm{\ul(\nu)-\ul(\nu+s)} 
&\le& \frac{1}{\la}\int_0^{\nu}\nres{\tau}\nrm{\ul(\nu-\tau)-\ul(\nu+s-\tau)}d\tau \\
&&+K^{\prime}se^{-\frac{\nu}{\la}}  \\
&&+\la C_2\nrm{h(\nu)-h(\nu+s)} .
\end{eqnarray*}
An application of Lemma \ref{s-t-integral-inequality} gives
\begin{eqnarray}
\lefteqn{\nrm{\ul(\nu)-\ul(\nu+s)}}\nonumber\\
&\le& K^{\prime}se^{-\frac{\nu}{\la}} +\la C_u\nrm{h(\nu)-h(\nu+s)}\nonumber \\
&&+\frac{1}{\la(1-\la\om)}\int_0^{\nu}e^{\frac{\om \tau}{1-\la\om}}
\fk{K^{\prime}se^{-\frac{\nu-\tau}{\la}} +\la C_u\nrm{h(\nu-\tau)-h(\nu-\tau+s)}}d\tau.
\end{eqnarray}
As 
\begin{eqnarray*}
\lefteqn{\frac{1}{\la(1-\la\om)}\int_0^{\nu}e^{\frac{\om \tau}{1-\la\om}}e^{-\frac{\nu-\tau}{\la}}d\tau} \\
&=&\exp\fk{\frac{\om\nu}{1-\la\om}}=\exp\fk{\frac{\om(t-s)}{1-\la\om}},
\end{eqnarray*}
the claim is proved. 
\end{proof}

\begin{cor}
Let $I=\rep,$ Assumption \ref{general-IVA0} and Assumption \ref{general-IVE1} with $\om <0$ be fulfilled, and $\ul$ a solution to (\ref{int_half-line-Yosida-Approx}) on $I$, then $\ul$ is uniformly equicontiuous on $I$.
\end{cor}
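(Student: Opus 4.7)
The plan is to extend the explicit equicontinuity estimate derived on a compact interval in the proof of Corollary \ref{IVE1-half-line-boundeness-cor} into one that is uniform on all of $\rep$. The hypothesis $\om<0$ is what makes this possible: it delivers both a uniform a priori bound on $\ul$ and the exponential decay in the Gronwall-type integral, so that the modulus of continuity of $h\in\bucrpx$ alone controls the modulus of continuity of $\ul$ uniformly in $\nu$ and $\la$.

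First I would apply Corollary \ref{corollary-ul-bd-if-w-less-0}: since $\om<0$, the family $\bk{\ul(t):t\in\rep,\,0<\la<1/\btr{\om}}$ is uniformly bounded, hence $L(\nrm{\ul(t)})\le C_u$ for a constant $C_u$ independent of $t$ and $\la$. This supplies exactly the same constant that appeared locally in the proof of Corollary \ref{IVE1-half-line-boundeness-cor}, but now with global validity.

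Next I would reuse the pointwise bound obtained in that proof,
\begin{eqnarray*}
\nrm{\ul(\nu)-\ul(\nu+s)}
&\le& K^{\prime}s\,e^{-\nu/\la} + \la C_u\nrm{h(\nu)-h(\nu+s)} \\
&&+\frac{1}{\la(1-\la\om)}\int_0^{\nu}e^{\om\tau/(1-\la\om)}\fk{K^{\prime}s\,e^{-(\nu-\tau)/\la}+\la C_u\nrm{h(\nu-\tau)-h(\nu-\tau+s)}}d\tau,
\end{eqnarray*}
which holds for every $\nu\ge 0$. Writing $\om_h(\eta):=\sup_{t\in\rep}\nrm{h(t)-h(t+\eta)}$, the hypothesis $h\in\bucrpx$ gives $\om_h(\eta)\to 0$ as $\eta\to 0$. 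Using the identity already computed in the proof of Corollary \ref{IVE1-half-line-boundeness-cor},
$$
\frac{1}{\la(1-\la\om)}\int_0^{\nu}e^{\om\tau/(1-\la\om)}e^{-(\nu-\tau)/\la}d\tau = \exp\fk{\frac{\om\nu}{1-\la\om}}\le 1,
$$
and the elementary bound (valid precisely because $\om<0$)
$$
\frac{1}{1-\la\om}\int_0^{\nu}e^{\om\tau/(1-\la\om)}d\tau \le \frac{1}{-\om},
$$
the four pieces of the right-hand side assemble into the single estimate
$$
\nrm{\ul(\nu)-\ul(\nu+s)} \le 2K^{\prime}s + \fk{\la C_u + \frac{C_u}{-\om}}\om_h(\btr{s}),
$$
uniformly in $\nu\ge 0$ and in $\la\in(0,1/\btr{\om})$. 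Since $\la C_u\le C_u/\btr{\om}$, the right-hand side tends to $0$ as $s\to 0$ independently of $\nu$ and $\la$, which is uniform equicontinuity.

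The whole argument hinges on the sign hypothesis $\om<0$, which is the only real obstacle: without it neither the factor $\exp(\om\nu/(1-\la\om))$ nor the primitive of $\exp(\om\tau/(1-\la\om))$ stays bounded as $\nu\to\infty$, and the compact-interval argument from Corollary \ref{IVE1-half-line-boundeness-cor} cannot be globalized. With $\om<0$ the decay is built in, and no further estimates beyond the modulus of continuity of $h$ are needed.
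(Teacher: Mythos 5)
Your proposal is correct and follows exactly the paper's route: the paper's proof is the one-line remark that Corollary \ref{corollary-ul-bd-if-w-less-0} gives uniform boundedness of $\ul$ (hence a global constant $C_u$ with $L(\nrm{\ul(t)})\le C_u$), after which inequality (\ref{A22-equicontinuity_ineq}) and Lemma \ref{s-t-integral-inequality} yield the estimate you wrote down, with $\om<0$ making the exponential kernel integrable on all of $\rep$. You have merely made explicit the uniform-in-$\nu$ bounds that the paper leaves to the reader, and your constants check out.
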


\begin{proof}
 From Corollary \ref{corollary-ul-bd-if-w-less-0} we obtain $\ul$ uniformly bounded, consequently we can apply (\ref{A22-equicontinuity_ineq}).
\end{proof}

To obtain boundedness for the $\ul$ uniformly in $\la>0,t\ge 0$ in the case of $\om=0,$ we have:

\begin{pro} \label{omega-zero-bounded-Yosida-approximation}
Let $I=\rep,$ $\om=0,$ and Assumption \ref{general-IVA0} hold. 
\begin{enumerate}
\item For $N\subset \rep$ a set of zero measure let 
$ x_0\in \bigcap_{t\in\rep, t\not\in N}D(A(t)), $ 
$f\in L^1(\rep,X),$ s.t. $f(t)\in A(t)x_0, t\not\in N.$ 
then if either $f$ is bounded, or $\bk{\ul}_{\la>0}$ is equicontinous, $\ul(t)$ is uniformly bounded in $\la>0,t\ge 0.$ 
\label{om_gleich_0_bd}
\item If for an $x_0\in\hat{D},$ $f\in L^1(\rep),$ and $\btr{A(t)x_0}\le f(t)$ a.e.,
then if either $f$ is bounded, or $\bk{\ul}_{\la>0}$ is equicontinous, then $\ul(t)$ is uniformly bounded in $\la>0,t\ge 0.$
\label{om_gleich_0_bd_hat_D}
\end{enumerate}
\end{pro}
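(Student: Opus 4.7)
The plan is to derive an integral inequality for $\phi(t):=\nrm{\ul(t)-x_0}$ from the fixpoint representation of $\ul$ and the dissipativity of $A(t)$, apply the Gronwall-type estimate of Lemma \ref{s-t-integral-inequality} with $\om=0$, and finally remove a residual term of the form $\la\nrm{f(t)}$ using either the $L^{\infty}$ hypothesis on $f$ or the equicontinuity of $\bk{\ul}$.

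First I would use $\ul(t)=\Jl(t)z(t)$ with $z(t):=\nres{t}u_0+\frac{1}{\la}\int_0^t\nres{\tau}\ul(t-\tau)d\tau$, so that $(\ul(t),(\ul(t)-z(t))/\la)\in A(t)$. For part (\ref{om_gleich_0_bd}), dissipativity of $A(t)$ against the pair $(x_0,f(t))\in A(t)$ (valid for $t\notin N$) yields
$$\nrm{\ul(t)-x_0}\le\nrm{z(t)-x_0+\la f(t)}\le\nrm{z(t)-x_0}+\la\nrm{f(t)}.$$
For part (\ref{om_gleich_0_bd_hat_D}), I would instead compare against $(\Jl(t)x_0,A_{\la}(t)x_0)\in A(t)$; since $x_0\in\hat{D}$, the monotonicity $\nrm{A_{\la}(t)x_0}\le\btr{A(t)x_0}\le f(t)$ together with $\nrm{\Jl(t)x_0-x_0}=\la\nrm{A_{\la}(t)x_0}$ delivers the analogous estimate with $\la f(t)$ in place of $\la\nrm{f(t)}$. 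Writing $z(t)-x_0=\nres{t}(u_0-x_0)+\frac{1}{\la}\int_0^t\nres{\tau}(\ul(t-\tau)-x_0)d\tau$ (using that the kernel integrates to $1$) and taking norms, this reduces to the scalar inequality
$$\phi(t)\le\nres{t}C_0+\la\nrm{f(t)}+\frac{1}{\la}\int_0^t\nres{t-s}\phi(s)\,ds,\qquad C_0:=\nrm{u_0-x_0}.$$

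An application of Lemma \ref{s-t-integral-inequality} with $\om=0$ then gives the pointwise bound
$$\phi(t)\le C_0+\la\nrm{f(t)}+\int_0^t\nrm{f(s)}\,ds\le C_0+\la\nrm{f(t)}+M_f,$$
where $M_f:=\int_{\rep}\nrm{f(s)}\,ds<\infty$. If $f$ is bounded, the residual $\la\nrm{f(t)}$ is immediately controlled for $\la$ in a bounded range. If instead $\bk{\ul:\la>0}$ is equicontinuous with modulus $\omega$, I would average the bound over a shift window: since $\phi(t)\le\phi(t+\tau)+\omega(\tau)$, it follows that
$$\phi(t)\le\omega(\ep)+\frac{1}{\ep}\int_0^{\ep}\phi(t+\tau)\,d\tau\le\omega(\ep)+C_0+\frac{\la}{\ep}M_f+M_f,$$
where the last step uses $\int_0^\ep\nrm{f(t+\tau)}d\tau\le M_f$. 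Fixing any $\ep>0$ with $\la$ restricted to a bounded range yields a bound independent of $t$.

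The principal obstacle is precisely this residual $\la\nrm{f(t)}$ term: when $f$ is merely in $L^1$, it cannot be controlled pointwise in $t$, and the averaging argument is the device that trades the pointwise obstruction for the integrated control that $\nrm{f}\in L^1$ actually supplies, with equicontinuity of $\ul$ providing the bridge between the two. Everything else is careful bookkeeping with the fixpoint formula, the convex-combination structure of $z(t)$, and the Gronwall estimate already invoked in the half-line lemmas above.
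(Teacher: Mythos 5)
Your proposal is correct and lands on exactly the paper's central estimate, namely the scalar integral inequality $\nrm{\ul(t)-x_0}\le \la\,(\cdot)+\el{t}\nrm{u_0-x_0}+\dl\int_0^t\el{s}\nrm{\ul(t-s)-x_0}ds$ followed by the Gronwall-type Lemma \ref{s-t-integral-inequality}; the differences are in execution. For part (2) the paper works with the bracket $\semim{\cdot,\cdot}$, a sequence $x_n(t)=J_{1/n}(t)x_0$, selections $x_n^*(t)$ from the duality map, and lower semicontinuity of $\semim{\cdot,\cdot}$ to pass to the limit $n\to\infty$; you instead compare the fixpoint pair $(\ul(t),(\ul(t)-z(t))/\la)$ directly against $(\Jl(t)x_0,A_{\la}(t)x_0)\in A(t)$ and use $\nrm{A_{\la}(t)x_0}\le\btr{A(t)x_0}\le f(t)$ together with $\nrm{\Jl(t)x_0-x_0}=\la\nrm{A_{\la}(t)x_0}$. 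That is a genuine simplification: it stays entirely at the level of the resolvent-contraction form of dissipativity and avoids the duality-map/liminf machinery, at no loss of generality since the same $\la$ serves both as Yosida parameter and comparison parameter. Conversely, you are more explicit than the paper on the endgame: the paper only says that continuity of $\ul$ and Lemma \ref{s-t-integral-inequality} "serve for the proof," whereas you isolate the residual $\la\nrm{f(t)}$ as the real obstruction when $f\in L^1$ is unbounded and supply the averaging device $\nrm{\ul(t)-x_0}\le\omega(\ep)+\frac{1}{\ep}\int_0^{\ep}\nrm{\ul(t+\tau)-x_0}d\tau$, which converts the pointwise defect into the integrated control $\int_0^{\ep}\nrm{f(t+\tau)}d\tau\le\nrm{f}_{L^1}$ precisely under the equicontinuity hypothesis. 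This is the step the statement's dichotomy (bounded $f$ versus equicontinuous family) is really about, and your write-up makes its role visible where the paper leaves it implicit.
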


\begin{proof}
We start with (\ref{om_gleich_0_bd}):
 Note that, a.e.,
\begin{eqnarray*}
0&\ge&[\dl\fk{\ul(t)-u_0-\dl\int_0^t\el{\tau}(\ul(t-\tau)-u_0)d\tau}-f(t),\ul(t)-x_0]_- \\
&\ge&[\dl\fk{\ul(t)-x_0-\dl\int_0^t\el{\tau}(\ul(t-\tau)-x_0)d\tau}+\dl\el{t}(x_0-u_0)-f(t),\ul(t)-x_0]_-.
\end{eqnarray*}
Rearranging the inequality, we derive
\begin{eqnarray*}
\nrm{\ul(t)-x_0}&\le& \la \nrm{f(t)}+\el{t}\nrm{x_0-u_0} +\dl\int_0^t\el{s}\nrm{\ul(t-s)-x_0}ds.
\end{eqnarray*}
a.e. in $t.$
An application of the continuity of $\ul$ and Lemma \ref{s-t-integral-inequality} serves for the proof.

To prove (\ref{om_gleich_0_bd_hat_D}): Let $y_n(t)=A_{1/n}(t)x_0,$ and $x_n(t)=J_{1/n}(t)x_0,$ then
$$
\nrm{y_n}\le \btr{A(t)x_0}\le f(t), \quad \nrm{J_{1/n}(t)x_0-x_0}\le f(t)\frac{1}{n}.
$$
For this sequence we have
$$ y_n(t) \in A(t)x_n(t).$$
Together with (\ref{int_half-line-Yosida-Approx} ), we obtain,
\begin{equation}
\semim{ \frac{1}{\la}\fk{ u_{\la}(t)-u_0 -\frac{1}{\la}\int_0^t\exp(-\frac{s}{\la})(u_{\la }(t-s)-u_0)ds}
-y_n(t),u_{\la}(t)-x_n(t)} \le 0.
\end{equation}
Consequently, there is $x_n^*(t) \in J(\ul(t)-x_n(t))$  s.t.
\begin{eqnarray} \label{dissipative_ineq_ul}
\lefteqn{\Rep x^*_n(t)\fk{\ul(t)-x_0-\frac{1}{\la}\int_0^t\exp(-\frac{s}{\la})(u_{\la }(t-s)-x_0)ds
	-\exp(-\frac{t}{\la})(x_0-u_0)-\la y_n(t)}} \nonumber \\
&=& \Rep x_n^*(t)\fk{ u_{\la}(t)-u_0-\frac{1}{\la}\int_0^t\exp(-\frac{s}{\la})(u_{\la }(t-s)-u_0)ds-y_n(t)} \nonumber \\
&\le& 0.
\end{eqnarray}
Applying $\semim{\cdot,\cdot}$ being lower semi continuous, we derive for $n\ge N,$ 
\begin{eqnarray*}
\lefteqn{\nrm{\ul(t)-x_0}}\\
&\le& \liminf_{n\to \infty} \semim{\ul(t)-x_0),\ul(t)-x_n(t)} \\
&\le& \liminf_{n\to \infty}\Rep x^*_n(t)\fk{\ul(t)-x_0)}\\
&\le&\liminf_{n\to\infty}\Big\{\Rep x^*_n(t)\fk{\ul(t)-x_0-\frac{1}{\la}\int_0^t\exp(-\frac{s}{\la})(u_{\la }(t-s)-x_0)ds
	-\exp(-\frac{t}{\la})(x_0-u_0)-\la y_n(t)}\\
&&+\Rep x_n^*(t)\fk{ u_{\la}(t)-u_0-\frac{1}{\la}\int_0^t\exp(-\frac{s}{\la})(u_{\la }(t-s)-u_0)ds-\la y_n(t)} \Big\}  \\
&\le& \frac{1}{\la}\intO\exp(-\frac{s}{\la})\nrm{u_{\la }(t-s)-x_0}ds +\exp(-\frac{t}{\la})\nrm{x_0-u_0}+\la f(t).
\end{eqnarray*}
Again we derive the inequality
\begin{eqnarray*}
\nrm{\ul(t)-x_0}&\le& \la f(t)+\el{t}\nrm{x_0-u_0} +\dl\int_0^t\el{s}\nrm{\ul(t-s)-x_0}ds
\end{eqnarray*}
a.e. $t\in\rep.$
An application of the continuity of $\ul$ and Lemma \ref{s-t-integral-inequality} serves for the proof.
\end{proof}

\begin{lem} \label{IVA0-IVE2-CauchyConvergence-lem}
Let $I=\rep,$  $u_0\in \hat{D},$ Assumption \ref{general-IVA0} and Assumption \ref{general-IVE2} be fulfilled, then
$\ul$ is locally uniformly Cauchy in $\la \to 0+.$
\end{lem}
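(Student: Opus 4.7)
\emph{Proof plan.} The strategy is to derive a two-parameter Volterra-type integral inequality for $\phi(t,s):=\nrm{\ul(t)-\um(s)}$, and then close it by comparison with a modified Bessel function $I_0$; specialization to the diagonal $t=s$ will then produce the desired uniform Cauchy property on $[0,T]$.

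First I would apply the stability inequality (\ref{resolvent_t_stability}) to the pairs $\bfk{\ul(t),\fk{\frac{d}{dt}}_{\la}\ul(t)}$ and $\bfk{\um(s),\fk{\frac{d}{ds}}_{\mu}\um(s)}$, which by definition of the approximants lie in the graphs of $A(t)+\om I$ and $A(s)+\om I$ respectively. I would take the free parameter there to be $\alpha:=\la\mu/(\la+\mu)$, yielding $1-\alpha\om=\lpmplmw/(\la+\mu)$, $1-\alpha/\la=\la/(\la+\mu)$ and $1-\alpha/\mu=\mu/(\la+\mu)$. Using the explicit form $\fk{\frac{d}{dt}}_{\la}\ul(t)=\dl(\ul(t)-w_{\la}(t))$ with $w_{\la}(t):=\el{t}u_0+\dl\int_0^t\el{\tau}\ul(t-\tau)\,d\tau$, the quantity $u-\alpha y$ on each side rewrites as a true convex combination of the $u$'s and the $w$'s. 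Applying the triangle inequality together with the probability-mass identity $\el{t}+\dl\int_0^t\el{\tau}\,d\tau=1$ produces
\begin{eqnarray*}
\lpmplmw\,\phi(t,s)
&\le& \la\,\nresm{s}\nrm{\ul(t)-u_0}+\mu\,\el{t}\nrm{\um(s)-u_0} \\
&&+\frac{\mu}{\la}\int_0^t\el{\tau}\phi(t-\tau,s)\,d\tau+\frac{\la}{\mu}\int_0^s\nresm{\sigma}\phi(t,s-\sigma)\,d\sigma+R_{\la,\mu}(t,s),
\end{eqnarray*}
where
$$R_{\la,\mu}(t,s):=\la\mu\,\fk{\nrm{\hw(t)-\hw(s)}\Lw(\nrm{\um(s)})+\nrm{g(t)-g(s)}\nrm{\fk{\frac{d}{ds}}_{\mu}\um(s)}}.$$

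By Lemma \ref{IVE2-half-line-boundedness-lemma} the ingredients of $R_{\la,\mu}$ are controlled: $\nrm{\ul(t)-u_0}\le K^\prime(t+\la)$, $\sup_{[0,T]}\nrm{\um(s)}<\infty$ so that $\Lw(\nrm{\um(s)})$ is bounded, and $\sup_{[0,T]}\nrm{\fk{\frac{d}{ds}}_{\mu}\um(s)}<\infty$ uniformly for $0<\mu\le c<1/(\btr{\om}+L_g)$. Combined with the Lipschitz continuity of $\hw$ and $g$ from Assumption \ref{general-IVE2}, this gives $R_{\la,\mu}(t,s)\le C\la\mu\btr{t-s}$, while the two boundary terms are of order $\la$ and $\mu$. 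The remaining double-convolution inequality is of the classical Crandall--Evans/Gripenberg shape: after the normalization $(t,s)\mapsto(t/\la,s/\mu)$ both exponential kernels become standard, and comparison with the modified Bessel function (the $I_0$ appearing in the macros $\IO$ and $\modBesselL$, presumably supplied by Lemma \ref{s-t-integral-inequality} in two variables) bounds $\phi(t,s)$ by an explicit expression that, on the diagonal $t=s\in[0,T]$, is of order $O\fk{\sqrt{\la}+\sqrt{\mu}+\omega_{\hw,g}(\btr{\la-\mu})}$, where $\omega_{\hw,g}$ denotes the common modulus of continuity; this collapses to $0$ as $\la,\mu\to 0+$.

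The main obstacle is the Bessel-function step: after rescaling one must amalgamate the two boundary terms, the Lipschitz-control term $C\la\mu\btr{t-s}$, and the two convolution pieces into a single quantity that the $I_0$-majorant can simultaneously dominate, while keeping the final bound small on the diagonal. The new difficulty compared with the autonomous, or even the Assumption \ref{general-IVE1}, case is the summand $\nrm{g(t)-g(s)}\nrm{\fk{\frac{d}{ds}}_{\mu}\um(s)}$; this is exactly where the Lipschitz requirement on $g$ in Assumption \ref{general-IVE2} and the uniform equi-Lipschitz bound on the Yosida derivative from Lemma \ref{IVE2-half-line-boundedness-lemma} are jointly decisive.
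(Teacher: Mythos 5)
Your setup coincides with the paper's: the same two\,–\,parameter inequality for $\nrm{\ul(t)-\um(s)}$ (the paper gets it from \cite[Prop.~6.5]{Ito_Kappel}, which is the $\la+\mu$ combination you reproduce with $\alpha=\la\mu/(\la+\mu)$), the same identification of the inhomogeneity as two boundary terms plus a remainder bounded by $C\la\mu\btr{t-s}$ via Lemma \ref{IVE2-half-line-boundedness-lemma}, and the same passage through the two--dimensional positive resolvent with modified Bessel kernel (Lemmas \ref{general-2-dim-inequality}, \ref{general-2-dim-resolvent}). Up to that point the plan is sound.

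The gap is in the final step, which is where essentially all the analytic work of the paper's proof lives. First, the tool you invoke ("Lemma \ref{s-t-integral-inequality} in two variables") is not what closes the argument; what is needed are the specific asymptotics of Lemma \ref{SomeIntegrals}, above all that $R^{-2}\int_0^R\int_0^R I_0(2\sqrt{xy})e^{-x-y}\btr{x-y}\,dy\,dx\to 0$ and its companions for the terms involving $\partial_yI_0$, and none of this is supplied or reduced to a standard fact. Second, after the resolvent is applied the two boundary terms $\frac{\la}{\Lambda}e^{-s/\mu}\nrm{\ul(t)-u_0}$ and $\frac{\mu}{\Lambda}e^{-t/\la}\nrm{\um(s)-u_0}$ do not stay "of order $\la$ and $\mu$": they get convolved against the Bessel kernel, and controlling the resulting integrals is the delicate part. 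The paper does this not pointwise on the diagonal but by integrating over $t\in[0,T]$, interchanging the order of integration (Proposition \ref{interchange-integrals}), reducing everything to the three model integrals listed after that step, and then recovering uniform convergence from $L^1$--convergence plus the equi--Lipschitz property of $\bk{\ul}$ via the appendix lemma --- the Gripenberg device. Your claimed pointwise diagonal bound $O(\sqrt{\la}+\sqrt{\mu}+\omega_{\hw,g}(\btr{\la-\mu}))$ is asserted rather than derived, and the term $\omega_{\hw,g}(\btr{\la-\mu})$ has no visible source in the inequality (the control functions are evaluated at $t$ and $s$, not at points differing by $\la-\mu$). So the plan is the right plan, but the step you yourself flag as "the main obstacle" is exactly the content of the paper's proof, and it is not overcome here.
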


\begin{proof}
 To prove $\ul$ to be Cauchy, we apply
\cite[Prop. 6.5 Ineq.(6.9), p. 187]{Ito_Kappel} with $t_1=t,t_2=s$,
\begin{eqnarray*}
x_1 &:=& u_{\la}(t) \\
x_2 &:=& u_{\mu}(s)\\
y_1 &:=& \frac{1}{\la}\fk{ u_{\la}(t)-u_0-\frac{1}{\la}\int_0^t \exp(-\frac{r}{\la})(u_{\la}(t-r)-u_0)dr} \\
y_2&:=&\frac{1}{\mu}\fk{ u_{\mu}(s)-u_0-\frac{1}{\mu}\int_0^s \exp(-\frac{r}{\mu})(u_{\mu}(s-r)-u_0)dr},
\end{eqnarray*}
the Lipschitz continuity of the control functions $h,g$, and Lemma \ref{IVE2-half-line-boundedness-lemma} to arrive at
\begin{eqnarray*}
\nrm{\ul(t)-\um(s)}
&\le&\frac{\la}{\lmw}\int_0^s\resm{\sigma}\nrm{\ul(t)-\um(s-\sigma)}d\sigma \\
&&+\frac{\mu}{\lmw}\int_0^s\res{\tau}\nrm{\ul(t-\tau)-\um(s)}d\tau \\
&&+\frac{\la}{\lmw}\nresm{s}\nrm{\ul(t)-u_0} \\
&&+\frac{\mu}{\lmw}\nres{t}\nrm{\um(s)-u_0} \\
&&+K\frac{\la\mu}{\lmw}\btr{t-s}.
\end{eqnarray*}

To bring  Lemmas \ref{general-2-dim-inequality} and \ref{general-2-dim-resolvent} into play, we choose the following setting:
\begin{gather*}
"t",\quad \alpha:=-\frac{1}{\la},\quad \delta:=\frac{\mu}{\la(\lmw)},\mbox{ consequently } \alpha+\delta=\frac{\mu\om -1}{\lmw},\\
"s",\quad \beta:=-\frac{1}{\mu},\quad \gamma:=\frac{\la}{\mu(\lmw)},\mbox{ consequently } \beta+\gamma=\frac{\la\om -1}{\lmw}.
\end{gather*}
The right hand side of the two dimensional integral inequality is
\begin{eqnarray*}
G(t,s)
&=&\frac{\la}{\lmw}\nresm{s}\nrm{\ul(t)-u_0} \\
&&+\frac{\mu}{\lmw}\nres{t}\nrm{\um(s)-u_0} \\
&&+K\frac{\la\mu}{\lmw}\btr{t-s}.
\end{eqnarray*}

Thus, we have to consider the following solution of the two dimensional integral equation. Define
$\Lambda:=\la+\mu-\la\mu\om.$ Then for $F(\la,\mu,t,s):=\nrm{\ul(t)-\um(s)},$ we have:
\begin{eqnarray}
\lefteqn{F(\la,\mu,t,s)}\\
&=&G(t,s) \\
&&+ \frac{\la}{\mu\Lambda}\int_0^s \exp(\llmw y)G(t,s-y)dy  \label{dint1}\\
&&+ \frac{\mu}{\la\Lambda}\int_0^t \exp(\mlmw x)G(t-x,s)dx \label{dint2}\\
&&+\frac{\la}{\mu\Lambda}\int_0^t\int_0^s \partial_x \IO{xy}\exp\fk{ \mlmw x + \llmw y}G((t-x,s-y)dydx \label{dint3}\\
&&+\frac{\mu}{\la\Lambda}\int_0^t\int_0^s \partial_y \IO{xy}\exp\fk{ \mlmw x + \llmw y}G((t-x,s-y)dydx \label{dint4}\\
&&+\frac{2}{\Lambda^2}\int_0^t\int_0^s \IO{xy}\exp\fk{ \mlmw x + \llmw y}G((t-x,s-y)dydx. \label{dint5} 
\end{eqnarray}
Similar to \cite{Gripenberg} the proof of uniform convergence for the $\ul$ is split into the equicontinuity, and the convergence in $L^1[0,T].$ We consider
$$\int_0^TF(\la,\mu,t,t)dt$$

We start with an estimation of the initial value terms.
Due to the symmetry in $(\la,t)$ and $(\mu,s)$ of $G(t,s)$ and the kernel, we only have to consider one of them.
$$
g(x,y):=\frac{\mu}{\lmw}\nres{x}\nrm{\um(y)-u_0}.
$$
The main step is to estimate (\ref{dint5}). Let
$$f(x,y):=\IO{xy}\exp\fk{ \mlmw x + \llmw y}.
$$ 
We go to apply Proposition \ref{interchange-integrals} and Lemma \ref{IVE2-half-line-boundedness-lemma} (1): 
\begin{eqnarray*}
\lefteqn{\int_0^T\int_0^t\int_0^t f(x,y)g(t-x,t-y)dydxdt} \\
&=& \int_0^T\int_x^T\int_{\max(x,y)-x}^{T-x} f(x,y)g(u,u+x-y)dudydx \\
&\le&\frac{\mu}{\lmw}\int_0^T\int_x^Tf(x,y)\int_{\max(x,y)-x}^{T-x}\nres{u}\nrm{\um(u+x-y)-u_0}dudydx \\
&\le& \frac{K\mu}{\lmw}\int_0^T\int_x^Tf(x,y)\int_{\max(x,y)-x}^{T-x}\nres{u}(u+x-y-\mu)dudydx \\
&\le& \frac{K\mu}{\lmw}\int_0^T\int_0^Tf(x,y)(\la^2+\la\btr{x-y}+\la\mu)dydx.
\end{eqnarray*}

This leads to the following integrals for the initial value term
$g(x,y):$
\begin{enumerate}
\item $$\frac{\mu\la}{\Lambda^3}\int_0^T\int_0^T \IO{xy}\exp\fk{ \mlmw x + \llmw y}\btr{x-y}dydx ,$$
\item $$\frac{\mu\la^2}{\Lambda^3}\int_0^T\int_0^T \IO{xy}\exp\fk{ \mlmw x + \llmw y}dydx ,$$
\item $$\frac{\mu^2\la}{\Lambda^3}\int_0^t\int_0^T \IO{xy}\exp\fk{ \mlmw x + \llmw y}dydy.$$
\end{enumerate}
Additionally, $G$ contains the symmetric ones in $(\la,\mu,x,y)$ for the second initial value term.
For the last term of $G$ we have
$$
\frac{\mu\la}{\Lambda^3}\int_0^T\int_0^T \IO{xy}\exp\fk{ \mlmw x + \llmw y}\btr{x-y}dydx,
$$
which is part of the estimate for $g.$ As $\la/\Lambda$ is bounded, for $K>0,$ 
$$\exp(\la x/\Lambda ) \le \exp(K T).$$

The previous observation and the substitution $x=x/\Lambda, y=y/\Lambda,$ lead to the following reduced integrals, which have to be discussed:
\begin{enumerate}
\item $$
\frac{ \mu\la }{\Lambda}\int_0^{T/\Lambda}\int_0^{T/\Lambda} I_0(\sqrt{xy})\exp(x -y)\btr{\Lambda x-\Lambda y}dydx, $$
\item $$\frac{\mu\la^2}{\Lambda}\int_0^{T/\Lambda}\int_0^{T/\Lambda} I_0(\sqrt{xy})\exp(-x-y)dydx, $$
\item $$\frac{\mu^2\la}{\Lambda}\int_0^{T/\Lambda}\int_0^{T/\Lambda} I_0(\sqrt{xy})\exp(-x-y)dydy.$$
\end{enumerate}
Defining $R=T/ \Lambda ,$ and  using $\la/\Lambda, \mu/\Lambda$ bounded, we obtain for the first integral
$$\frac{1}{R^2}\int_0^{R}\int_0^{R} I_0(\sqrt{xy})\exp(x -y)\btr{ x-y}dydx. $$
Applying Lemma \ref{SomeIntegrals} (\ref{bd_lipschitz_bessel_convergence}) we are finished.
The integrals (\ref{bessel_laplace_trans}) and (\ref{infinite_lipschitz_bessel_convergence}) will be proved by the same arguments, thanks to Lemma \ref{SomeIntegrals} (\ref{bd_bessel_convergence}). \\
Considering the integrals with the derivative of the Bessel-function $I_0$, we end up with:
\begin{enumerate}
\item $$
\mu^2\Lambda \int_0^{T/\Lambda}\int_0^{T/\Lambda} \partial_yI_0(\sqrt{xy})\exp(x -y)\btr{\Lambda x-\Lambda y}dydx, $$
\item $$\mu^2 \la \Lambda \int_0^{T/\Lambda}\int_0^{T/\Lambda} \partial_yI_0(\sqrt{xy})\exp(-x-y)dydx, $$
\item $$\mu^3  \Lambda \int_0^{T/\Lambda}\int_0^{T/\Lambda} \partial_yI_0(\sqrt{xy})\exp(-x-y)dydy. $$
\end{enumerate}
The above integrals tend to zero using Partial Integration and Lemma \ref{SomeIntegrals}.
The symmetry applies for the other ones.

The representative terms with exponential function as kernel are:
\begin{enumerate}
\item $$\exp(-\frac{t}{\mu})\frac{\mu}{\Lambda^2}\int_0^t\exp\fk{\frac{\mu\om-1}{\Lambda}x}\nrm{u_{\la}(t-x)-x_0}dx,$$
\item $$\frac{\mu^2}{\la\Lambda^2}\int_0^t\exp\fk{\frac{\mu\om-1}{\Lambda}x}\exp(\frac{x-t}{\la})\nrm{u_{\mu}(t)-x_0}dx,$$
\label{kernel_exp_2}
\item $$\frac{\mu^2}{\Lambda^2}\int_0^t\exp\fk{\frac{\mu\om-1}{\Lambda}y}xdx. $$
\end{enumerate}
Due to the local boundedness of $\ul,$  the terms could be easily computed and tend to zero as $\la,\mu \to 0.$ For (\ref{kernel_exp_2}) use, after integration, $x\exp(-x)$ bounded for $x>0.$
\end{proof}

\begin{cor} \label{u_of_t_in_D_hat}
Let $I=\rep,$ $u_0\in \hat{D},$ Assumption \ref{general-IVA0} and Assumption \ref{general-IVE2} be fulfilled, then the limit
$$
\lim_{\la\to 0}\ul(t)=u(t)\in\hat{D}.
$$
Moreover, there is $K>0,$ such that $\btr{A(t)u(t)}\le K$ for all $t\in[0,T].$
\end{cor}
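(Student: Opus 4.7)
The plan is to leverage the uniform bound on the derivative approximants $y_\la(t):=\fk{\frac{d}{dt}}_\la\ul(t)$ proved in Lemma \ref{IVE2-half-line-boundedness-lemma}(2), together with the standard dissipativity estimate $\nrm{A_\mu(t)x}\le\nrm{y}$ for $(x,y)\in A(t)$, to obtain a bound on $\nrm{A_\mu(t)\ul(t)}$ uniform in $\la$ and $t$. One then passes $\la\to 0+$ (using $\ul\to u$ from Theorem \ref{IVA0-IVE2-CauchyConvergence} and the non-expansiveness of $J_\mu(t)$) to bound $\nrm{A_\mu(t)u(t)}$, and finally $\mu\to 0+$ to read off $u(t)\in\hat{D}$ together with $\btr{A(t)u(t)}\le K$.

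First I would collect the ingredients from Lemma \ref{IVE2-half-line-boundedness-lemma}: item (1) supplies a constant $M$ with $\nrm{\ul(t)}\le M$ on $[0,T]$, while item (2) supplies a constant $K_0$ with $\nrm{y_\la(t)}\le K_0$, both uniform in $0<\la\le c$ for some $c<1/(\btr{\om}+L_g)$. Since $y_\la(t)\in A(t)\ul(t)+\om\ul(t)$, the element $a_\la(t):=y_\la(t)-\om\ul(t)$ lies in $A(t)\ul(t)$ with $\nrm{a_\la(t)}\le K_1:=K_0+\btr{\om}M$. In other words, $\ul(t)\in D(A(t))$ with a representative in $A(t)\ul(t)$ of uniformly controlled norm.

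Next, for any fixed $\mu>0$, I would invoke the classical inequality $\nrm{A_\mu(t)x}\le\nrm{y}$ for pairs $(x,y)\in A(t)$, which follows by applying the dissipativity of $A(t)$ to the pairs $(J_\mu(t)x,A_\mu(t)x)$ and $(x,y)$ with $\la=\mu$ in the resolvent inequality. Applied to $(\ul(t),a_\la(t))$ this yields $\nrm{A_\mu(t)\ul(t)}\le K_1$ uniformly in $\la\in(0,c]$ and $t\in[0,T]$. Letting $\la\to 0+$, Theorem \ref{IVA0-IVE2-CauchyConvergence} gives $\ul(t)\to u(t)$, and since $J_\mu(t)$ is non-expansive, $A_\mu(t)\ul(t)\to A_\mu(t)u(t)$ in norm; hence $\nrm{A_\mu(t)u(t)}\le K_1$ for every $\mu>0$ and every $t\in[0,T]$. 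Taking $\mu\to 0+$ in the definition of $\hat{D}$ then gives $u(t)\in\hat{D}$ and $\btr{A(t)u(t)}\le K_1=:K$ uniformly in $t\in[0,T]$. The hardest step is the uniform bound on $y_\la(t)$, and that is already delivered by Lemma \ref{IVE2-half-line-boundedness-lemma}(2); once this is in hand, the remainder is a routine closure argument for resolvents of $m$-dissipative operators, so I anticipate no further obstacle.
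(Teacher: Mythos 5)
Your proposal is correct and follows essentially the same route as the paper: a uniform bound on $\fk{\frac{d}{dt}}_{\la}\ul(t)$ from Lemma \ref{IVE2-half-line-boundedness-lemma}, the Crandall--Pazy estimate $\nrm{A_{\mu}(t)x}\le\inf\bk{\nrm{y}:y\in A(t)x}$, then the limits $\la\to 0+$ followed by $\mu\to 0+$. You are in fact slightly more careful than the paper's own write-up in explicitly passing from $\fk{\frac{d}{dt}}_{\la}\ul(t)\in A(t)\ul(t)+\om\ul(t)$ to the representative $\fk{\frac{d}{dt}}_{\la}\ul(t)-\om\ul(t)\in A(t)\ul(t)$ before applying that estimate.
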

\begin{proof}
We are going to apply \cite[Lemma 1.2, (ii),(iv), Lemma 1.4]{CrandallPazy}. As for $\mu$ small, 
$$
\nrm{A_{\mu}(t)\ul(t)} \le \frac{1}{1-\mu\om}\inf_{y\in A(t)\ul(t)}\nrm{y}\le \frac{1}{1-\mu\om} \nrm{\fk{\partial_t}_{\la}\ul(t)}\le K,
$$
by Lemma \ref{IVE2-half-line-boundedness-lemma},
we have
\begin{eqnarray*}
\nrm{A_{\mu}(t)u(t)}&\le& \nrm{A_{\mu}(t)\ul(t)}+\nrm{A_{\mu}(t)\ul(t)-A_{\mu}(t)u(t)}\\
&\le& K +L_{\mu}\nrm{\ul(t)-u(t)}.
\end{eqnarray*}
Letting $\la \lra 0+,$ and then $\mu\lra 0+,$ we obtain the claim.
\end{proof}

\begin{cor} \label{IVA0-IVE1-CauchyConvergence-cor}
Let $I=\rep,$ $u_0\in \hat{D},$ Assumption \ref{general-IVA0} and Assumption \ref{general-IVE1} be fulfilled, then
$\ul$ is locally uniformly Cauchy as $\la \to 0+.$
\end{cor}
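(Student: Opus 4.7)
The plan is to mimic the proof of Lemma \ref{IVA0-IVE2-CauchyConvergence-lem} almost verbatim, with the simplification $g\equiv 0$; the only genuinely new ingredient is that the Lipschitz bound $K|t-s|$ on the control term must be replaced by the modulus of continuity of $h,$ which forces a localization argument inside the Bessel integrals.

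First I would substitute $x_1:=\ul(t),$ $x_2:=\um(s),$ $y_1:=\fk{\tfrac{d}{dt}}_{\la}\ul(t),$ $y_2:=\fk{\tfrac{d}{dt}}_{\mu}\um(s)$ into the stability inequality (\ref{resolvent_t_stability}) with $g=0$ and proceed exactly as in Lemma \ref{IVA0-IVE2-CauchyConvergence-lem} to obtain the same two-dimensional integral inequality for $F(\la,\mu,t,s):=\nrm{\ul(t)-\um(s)},$ now with error term
$$
G(t,s) \;=\; \frac{\la}{\lmw}\nresm{s}\nrm{\ul(t)-u_0}\;+\;\frac{\mu}{\lmw}\nres{t}\nrm{\um(s)-u_0}\;+\;\frac{\la\mu\,C_L}{\lmw}\nrm{h(t)-h(s)},
$$
where $C_L:=\sup\{L(\nrm{\um(\tau)}):0<\mu<1/\btr{\om},\;0\le\tau\le T\}$ is finite thanks to Corollary \ref{IVE1-half-line-boundeness-cor}(1) and continuity of $L.$ Lemmas \ref{general-2-dim-inequality} and \ref{general-2-dim-resolvent} then represent $F(\la,\mu,t,t)$ as $G(t,t)$ plus the four resolvent integrals (\ref{dint1})--(\ref{dint5}) applied to $G.$

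The two initial-value summands of $G$ are handled verbatim as in Lemma \ref{IVA0-IVE2-CauchyConvergence-lem}, because the linear bound $\nrm{\ul(t)-u_0}\le K'(t+\la)$ persists in the IVE1 setting (Corollary \ref{IVE1-half-line-boundeness-cor}(1)) and the same Bessel-integral estimates from Lemma \ref{SomeIntegrals} apply. The new task is to estimate, after the substitution $x\mapsto\Lambda x,\;y\mapsto\Lambda y$ with $\Lambda:=\la+\mu-\la\mu\om,$ the representative integral
$$
\frac{\la\mu\,C_L}{\Lambda^{2}}\int_{0}^{T/\Lambda}\!\!\int_{0}^{T/\Lambda} I_{0}(\sqrt{xy})\,e^{-x-y}\,\nrm{h(t-\Lambda x)-h(t-\Lambda y)}\,dy\,dx.
$$
I would split $[0,T/\Lambda]^{2}$ into a fixed bounded square $[0,R]^{2}$ and its complement. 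On $[0,R]^{2}$ the uniform continuity of $h$ gives $\nrm{h(t-\Lambda x)-h(t-\Lambda y)}\le\omega_{h}(\Lambda(x+y)),$ which tends to $0$ uniformly in $(x,y)\in[0,R]^{2}$ and $t\in[0,T]$ as $\Lambda\to 0.$ On the complement I would use $\nrm{h(t-\Lambda x)-h(t-\Lambda y)}\le 2\nri{h}$ together with the tail estimate $\int\!\!\int_{[0,\infty)^{2}\setminus[0,R]^{2}}I_{0}(\sqrt{xy})e^{-x-y}dy\,dx\to 0$ as $R\to\infty$ (Lemma \ref{SomeIntegrals}). The Bessel-derivative variants (\ref{dint3})--(\ref{dint4}) and the pure-exponential contributions (\ref{dint1})--(\ref{dint2}) are treated in the same manner, using partial integration where needed.

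The main obstacle is precisely this split-domain argument: in the Lipschitz setting of Lemma \ref{IVA0-IVE2-CauchyConvergence-lem} the linear factor $|x-y|$ fed directly into the moment bounds of Lemma \ref{SomeIntegrals}, whereas for merely uniformly continuous $h$ one has to first localize inside a fixed box, exploit uniform continuity as $\Lambda\to 0,$ and then dominate the tail by the exponential decay of the Bessel kernel together with the boundedness of $h.$ Once this replacement is carried out, taking $\sup_{0\le t\le T}$ on both sides gives $\sup_{t\in[0,T]}\nrm{\ul(t)-\um(t)}\to 0$ as $\la,\mu\to 0+,$ which is the local Cauchy property claimed.
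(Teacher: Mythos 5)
Your overall architecture matches the paper's: reduce to the two-dimensional integral inequality of Lemma \ref{IVA0-IVE2-CauchyConvergence-lem} with $g=0$, reuse the initial-value estimates via Corollary \ref{IVE1-half-line-boundeness-cor}, and isolate the single new difficulty, namely the control term $\frac{\mu\la}{\Lambda^{3}}\int_0^T\int_0^T \IO{xy}\exp(\cdot)\,\btr{h(t-x)-h(t-y)}\,dy\,dx$. Up to that point the proposal is sound. However, the localization you propose for this term does not work. After the substitution $x\mapsto \Lambda x$, $y\mapsto\Lambda y$ the kernel $I_0(2\sqrt{xy})e^{-x-y}$ is \emph{not} integrable over the quadrant: by the identity $\int_0^{\infty}I_0(2\sqrt{xy})e^{-x}\,dx=e^{y}$ used in the paper, one has $\int_0^{S}\int_0^{S}I_0(2\sqrt{xy})e^{-x-y}\,dy\,dx\sim S$ (this is exactly why Lemma \ref{SomeIntegrals} only asserts $\frac1R\int_0^R\int_0^R\le 1$ and $\frac1{R^2}\int_0^R\int_0^R\to 0$, not convergence of the tail). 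Consequently the complement $[0,T/\Lambda]^2\setminus[0,R]^2$ of a \emph{fixed} box carries kernel mass of order $T/\Lambda-R$, and multiplying by the prefactor $\frac{\mu\la}{\Lambda}\le\frac{\Lambda}{4}$ gives a contribution of order $2\nri{h}\,\frac{T}{4}$, which is bounded but does not tend to $0$ as $\la,\mu\to 0$. The tail estimate you invoke, $\int\!\!\int_{[0,\infty)^2\setminus[0,R]^2}I_0(\sqrt{xy})e^{-x-y}\,dy\,dx\to 0$, is not in Lemma \ref{SomeIntegrals} and is false (the integral is infinite for every $R$); item (\ref{bessel_uniform_integrability}) of that lemma needs the extra exponential decay coming from $\om\neq 0$ and is not available in this local, sign-free setting.

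The paper closes this gap differently: since the full weight $\frac{\mu\la}{\Lambda^{3}}\int_0^T\int_0^T \IO{xy}\exp(\cdot)\,dy\,dx$ is \emph{uniformly bounded} in $\la,\mu$, one mollifies $h$ and writes $h=h_{\ep}+(h-h_{\ep})$: the term with $h-h_{\ep}$ is at most $2\nrm{h-h_{\ep}}_{\infty}$ times that uniformly bounded weight, hence small uniformly in $\la,\mu$ once $\ep$ is fixed small; the term with the Lipschitz function $h_{\ep}$ reduces to the $\btr{x-y}$ integral already handled in Lemma \ref{IVA0-IVE2-CauchyConvergence-lem} via Lemma \ref{SomeIntegrals} (\ref{bd_lipschitz_bessel_convergence}), and tends to $0$ as $\la,\mu\to 0$ for each fixed $\ep$. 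A standard three-term argument finishes. Your proof would be repaired by replacing the split of the domain by this uniform approximation of $h$ by Lipschitz functions (a split along the diagonal band $\btr{x-y}\le M$ also fails, since for large $x$ the kernel concentrates in a band of width $\sqrt{x}$ exceeding any fixed $M$).
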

\begin{proof}
 Applying Corollary \ref{IVE1-half-line-boundeness-cor}, we obtain, similar to Lemma \ref{IVA0-IVE2-CauchyConvergence-lem}, the inequality

\begin{eqnarray*}
\nrm{\ul(t)-\um(s)}
&\le&\frac{\la}{\lmw}\int_0^s\resm{\sigma}\nrm{\ul(t)-\um(s-\sigma)}d\sigma \\
&&+\frac{\mu}{\lmw}\int_0^s\res{\tau}\nrm{\ul(t-\tau)-\um(s)}d\tau \\
&&+\frac{\la}{\lmw}\nresm{s}\nrm{\um(t)-u_0} \\
&&+\frac{\mu}{\lmw}\nres{t}\nrm{\um(s)-u_0} \\
&&+K\frac{\la\mu}{\lmw}\nrm{h(t)-h(s)}.
\end{eqnarray*}
The initial value terms $\frac{\la}{\lmw}\nresm{s}\nrm{\um(t)-u_0},$ and $\frac{\mu}{\lmw}\nres{t}\nrm{\um(s)-u_0},$ are the same as in the proof of Lemma \ref{IVA0-IVE2-CauchyConvergence-lem}. Thus, it remains to discuss the control function
$$
\frac{\mu\la}{\Lambda^3}\int_0^T\int_0^T \IO{xy}\exp\fk{ \mlmw x + \llmw y}\btr{h(t-x)-h(t-y)}dydx.
$$
As
\begin{eqnarray*}
\lefteqn{\frac{\mu\la}{\Lambda^3}\int_0^T\int_0^T \IO{xy}\exp\fk{ \mlmw x + \llmw y}dydx}\\
&=&\frac{ \mu\la }{\Lambda}\int_0^{T/\Lambda}\int_0^{T/\Lambda} I_0(\sqrt{xy})\exp(x -y)dydx,
\end{eqnarray*}
is uniformly bounded by Lemma \ref{SomeIntegrals}, we can approximate $h$ with the mollified $h_{\ep}.$ Using $h_{\ep}$ Lipschitz, we end up with the integral,
$$
\frac{\mu\la}{\Lambda^3}\int_0^T\int_0^T \IO{xy}\exp\fk{ \mlmw x + \llmw y}\btr{x-y}dydx,
$$ 
which was discussed in the proof of Lemma \ref{IVA0-IVE2-CauchyConvergence-lem}.\\
\end{proof}

Next, we prove the existence and convergence for the whole line case before we show that the limit is an integral solution.

\section{Proofs of Technical Preliminaries in the Whole Line Case}
In this section we discuss the proofs for the whole line case.

Due to the uniform continuity of $h$ in the case of Assumption \ref{general-IVE1}, and of $g,h$ in the case of Assumption \ref{general-IVE2} for
 given $\la >0,$ $\Jl(t)f(t) \in \bucrx$ for all $f\in \bucrx.$ Thus, the solution $u_{\la}\in \bucrx$
   to equation (\ref{int_line-Yosida-Approx}) is found
 by applying the Banach Fix-Point Principle, where the mapping is
\begin{equation} \label{fixpoint-mapping}
F(u)(t):= J_{ \frac{\la}{1-\la \om} }(t) \left( \frac{1}{1-\la \om}
    \left(\intO \res{s}u(t-s)ds\right) \right).
\end{equation}

For this $\ul$ prove the uniform boundedness in $\la >0,$ and $t\in\re$ cited in Proposition \ref{ul-bounded} \\

A sufficient condition for $\ul$ to be bounded is:

\begin{pro} \label{ul-bounded} 
Let $I=\re,$  Assumption \ref{general-IVA0}, and either Assumption \ref{general-IVE1}, or Assumption \ref{general-IVE2} be fulfilled and $\om < 0.$ 

Then 
$$
\nrm{u_{\la}(t)} < K_1,
$$
for all $\la >0,$ and $t\in \re.$
\end{pro}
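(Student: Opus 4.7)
The plan is to obtain the uniform bound by comparing $\ul(t)$ to a fixed reference point $x_0\in D(A(s_0))$ for some $s_0\in\re$ (such a pair exists by Assumption \ref{general-IVA0}), and exploiting $\om<0$ to turn the stability inequality into a scalar bound. Note first that $\ul\in\bucrx$ by the Banach fixpoint construction of (\ref{fixpoint-mapping}), so $M_\la:=\sup_{t\in\re}\nrm{\ul(t)-x_0}$ is finite for each $\la$. The goal is to show $M_\la$ is bounded independently of $\la$.

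First, I would fix $y_0\in A(s_0)x_0$, set $v_\la(t):=\fk{\frac{d}{dt}}_\la\ul(t)\in A(t)\ul(t)+\om \ul(t)$, and apply the stability inequality (\ref{resolvent_t_stability}) with $(x_1,y_1+\om x_1)=(\ul(t),v_\la(t))$ and $(x_2,y_2+\om x_2)=(x_0,y_0+\om x_0)$:
\begin{eqnarray*}
(1-\la\om)\nrm{\ul(t)-x_0}
&\le& \nrm{\ul(t)-x_0-\la(v_\la(t)-y_0-\om x_0)}\\
&& +\la\nrm{\hw(t)-\hw(s_0)}\Lw(\nrm{x_0})+\la\nrm{g(t)-g(s_0)}\nrm{y_0+\om x_0}.
\end{eqnarray*}
(In the case of Assumption \ref{general-IVE1} the last term drops out since $g=0$.)

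Next, using the definition (\ref{defii-whole-line-Yosida-Approx}) one rewrites $\ul(t)-\la v_\la(t)=\dl\intO\nres{r}\ul(t-r)dr$, and since $x_0=\dl\intO\nres{r}x_0\,dr$, the triangle inequality yields
$$
\nrm{\ul(t)-x_0-\la(v_\la(t)-y_0-\om x_0)}\le \dl\intO\nres{r}\nrm{\ul(t-r)-x_0}dr+\la\nrm{y_0+\om x_0}.
$$
Boundedness of the control functions $\hw$ and $g$ (from Assumption \ref{general-IVE1} or Assumption \ref{general-IVE2}) gives $\nrm{\hw(t)-\hw(s_0)}\le 2\nri{\hw}$ and $\nrm{g(t)-g(s_0)}\le 2\nri{g}$, so combining we obtain
$$
(1-\la\om)\nrm{\ul(t)-x_0}\le \dl\intO\nres{r}\nrm{\ul(t-r)-x_0}dr+\la C,
$$
where $C:=\nrm{y_0+\om x_0}(1+2\nri{g})+2\nri{\hw}\Lw(\nrm{x_0})$ is independent of $\la$ and $t$.

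Finally, taking $\sup_{t\in\re}$ and using $\dl\intO\nres{r}dr=1$, the integral term is bounded above by $M_\la$, yielding $(1-\la\om)M_\la\le M_\la+\la C$, i.e.\ $-\la\om\, M_\la\le \la C$, and because $\om<0$, this gives $M_\la\le C/\btr{\om}$. Hence $\nrm{\ul(t)}\le\nrm{x_0}+C/\btr{\om}=:K_1$ uniformly. The main subtlety, and the step deserving the most care, is the rewriting of $u_\la(t)-\la v_\la(t)$ through (\ref{defii-whole-line-Yosida-Approx}) together with the fact that $\fk{r\mapsto\dl\nres{r}}$ is a probability density on $\rep$, which is what ultimately cancels the $M_\la$ on both sides and lets the negative $\om$ produce the bound.
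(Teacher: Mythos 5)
Your proof is correct, and it reaches the bound by a route that differs from the paper's in two respects. The paper does not use the cross-time stability inequality (\ref{resolvent_t_stability}) at all in this step: it fixes $x_0\in\hat{D}$, invokes Remark \ref{hat_D_constant} to get a bound $\btr{A(t)x_0}\le K_1$ uniform in $t$, and then compares $\ul(t)$ with the \emph{time-dependent} reference pair $x_n(t)=J_{1/n}(t)x_0$, $y_n(t)=A_{1/n}(t)x_0\in A(t)x_n(t)$, so that only plain dissipativity of $A(t)$ at the single time $t$ is needed; the reference point is recovered by letting $n\to\infty$ via lower semicontinuity of $\semim{\cdot,\cdot}$. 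You instead fix one pair $[x_0,y_0]\in A(s_0)$ at a single time $s_0$ and pay for the time mismatch through the control-function terms of (\ref{resolvent_t_stability}), which you absorb using their boundedness --- this is cheaper (no approximating sequence, no semicontinuity argument, no need for $\hat{D}$ being independent of $t$), at the price of genuinely invoking Assumption \ref{general-IVE1}/\ref{general-IVE2} rather than just Assumption \ref{general-IVA0} plus the conclusion of Remark \ref{hat_D_constant}; both proofs ultimately use only boundedness of $g,h$, consistent with Remark \ref{only_bd_cont_control_ul_bd}. The second difference is the closing step: the paper feeds the integral inequality into the explicit positive resolvent of Proposition \ref{integral-ineq.}, whereas you take $\sup_{t\in\re}$ and use that $r\mapsto\frac{1}{\la}\nres{r}$ is a probability density, so the convolution term is again bounded by $M_\la$ and the $-\la\om M_\la\le\la C$ cancellation does the work. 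Your sup argument is more elementary and transparent; its only prerequisite is the a priori finiteness of $M_\la=\sup_t\nrm{\ul(t)-x_0}$, which you correctly supply from $\ul\in\bucrx$ (the Banach fixpoint construction), and which the paper's resolvent argument implicitly needs as well.
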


\begin{proof}
Let $x_0\in\hat{D},$ then thanks to Remark \ref{hat_D_constant}, $\btr{A(t)x_0}\le K_1,$ for all $t\in \re.$
Choose $y_n(t):=A_{1/n}(t)x_0,$ and $x_n:=J_{1/n}(t)x_0.$ For this choice we have,
$$
\nrm{y_n}\le \btr{A(t)x_0}\le K_1, \quad \nrm{J_{1/n}(t)x_0-x_0}\le K_1\frac{1}{n}.
$$
For this sequence we have,
$$ y_n(t) \in A(t)x_n(t).$$
Together with (\ref{int_line-Yosida-Approx}), we obtain,
\begin{equation}
\semim{ \frac{1}{\la}\fk{ u_{\la}(t) -\frac{1}{\la}\intO\exp(-\frac{s}{\la})u_{\la }(t-s)ds}
-\om u_{\la}(t)-y_n(t),u_{\la}(t)-x_n(t)} \le 0.
\end{equation}
Consequently there is $x_n^*(t) \in J(\ul(t)-x_n(t))$ such that
\begin{equation} \label{dissipative_ineq_ul1}
\Rep x_n^*(t)\fk{\frac{1}{\la}\fk{ u_{\la}(t)-\frac{1}{\la}\intO\exp(-\frac{s}{\la})u_{\la }(t-s)ds}-\om u_{\la}(t)-y_n(t)} \le 0.
\end{equation}
Noting $(\frac{d}{dt})_{\la}(\bk{t\mapsto x_0})=0,$ and applying $\semim{\cdot,\cdot}$ being lower semi continuous, we derive 
\begin{eqnarray*}
\lefteqn{(1-\la\om)\nrm{\ul(t)-x_0}}\\
&\le& \liminf_{n\to\infty} \semim{\fk{(1-\la\om)(\ul(t)-x_0)},\ul(t)-x_n(t)}  \\
&\le& \liminf_{n\to\infty}\Rep x^*_n(t)\fk{(1-\la\om)(\ul(t)-x_0)}\\
&\le& \liminf_{n\to\infty} \big\{ \Rep x^*_n(t)\fk{\ul(t)-\frac{1}{\la}\intO\exp(-\frac{s}{\la})u_{\la }(t-s)ds
	-\la\om \ul(t)-\la y_n(t)}\Big\}\\
&&+\frac{1}{\la}\intO\exp(-\frac{s}{\la})\nrm{u_{\la }(t-s)-x_0}ds +\la\om\nrm{x_0}+\la\nrm{y_n(t)} \\
&\le& \frac{1}{\la}\intO\exp(-\frac{s}{\la})\nrm{u_{\la }(t-s)-x_0}ds +\la\om\nrm{x_0}+\la K_1.
\end{eqnarray*}
Let $K_2:=\om\nrm{x_0}+K_1.$
An application of Proposition \ref{integral-ineq.} yields
\begin{eqnarray*}
\lefteqn{\nrm{u_{\la}(t)-x_0}} \\
&\le& \frac{\la}{1-\la \om}K_2
+\frac{1}{1-\la \om} \int_0^{\infty}\exp(\frac{\om \tau}{1-\la \om})K_2 d\tau \\
&\le& K_2\fk{\frac{\la}{1-\la \om} - \frac{1}{\om}}
\end{eqnarray*}
Thus, we obtain $\nrm{u_{\la}(t)}$ is uniformly bounded in $t\in \re$ and $\la >0.$
\end{proof}

\begin{remk}  \label{only_bd_cont_control_ul_bd}
Note that, for the boundedness of $\ul,$ only the boundedness of $g,h$ was used.
\end{remk}

\begin{lem} \label{uniform-bd-equi}
Let $I=\re,$ Assumption \ref{general-IVA0} be fulfilled with $\om <0.$ 
Then the family $\bk{\ul:\la >0}$ is  equicontinuous on $I$ in the case of  Assumption \ref{general-IVE1}. \\
If the Lipschitz constant of $g$ is less than $-\om,$  then in the case of Assumption \ref{general-IVE2},
there exists $K_3>0$ such that
$$
\nrm{\dl\fk{ \ul(t)-\dl\intO\el{s}\ul(t-s)ds} } \le K_3
$$
for all $\la >0,$ and $t\in \re.$ Further, the family $\bk{\ul:\la >0}$ has a common Lipschitz constant.
\end{lem}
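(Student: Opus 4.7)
The strategy mirrors the half-line arguments in Lemma \ref{IVE2-half-line-boundedness-lemma} and Corollary \ref{IVE1-half-line-boundeness-cor}, but the whole-line Yosida approximation is translation invariant, so taking a supremum over $t\in\re$ turns local integral estimates into closed inequalities for the modulus of continuity.

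\textbf{Step 1 (apply the perturbed stability inequality along shifts).} Fix $s>0$ and $t\in\re$. Since $\fk{\frac{d}{dt}}_{\la}\ul(r)\in A(r)\ul(r)+\om \ul(r)$, we may apply (\ref{resolvent_t_stability}) with $t_1=t-s$, $t_2=t$, $x_i=\ul(t_i)$, and $y_i+\om x_i=\fk{\frac{d}{dt}}_{\la}\ul(t_i)$. A direct computation from (\ref{defii-whole-line-Yosida-Approx}) yields
$$\ul(t-s)-\ul(t)-\la\bk{\fk{\tfrac{d}{dt}}_{\la}\ul(t-s)-\fk{\tfrac{d}{dt}}_{\la}\ul(t)}=\dl\intO\el{\tau}\fk{\ul(t-s-\tau)-\ul(t-\tau)}d\tau,$$
so taking norms and using $C:=\sup_{\la>0}L^{\om}(\nrm{\ul}_\infty)<\infty$ (finite by Proposition \ref{ul-bounded} and Remark \ref{only_bd_cont_control_ul_bd}) we obtain
$$(1-\la\om)\nrm{\ul(t-s)-\ul(t)}\le \dl\intO\el{\tau}\nrm{\ul(t-s-\tau)-\ul(t-\tau)}d\tau + \la C\nrm{\hw(t-s)-\hw(t)} + \la\nrm{g(t-s)-g(t)}\,V_\la(t),$$
where $V_\la(t):=\nrm{\fk{\frac{d}{dt}}_\la\ul(t)}$.

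\textbf{Step 2 (take a sup in $t$).} Set $M_\la(s):=\sup_{t\in\re}\nrm{\ul(t-s)-\ul(t)}$, which is finite because $\ul$ is bounded. Shifting the inner integration variable and using $\dl\intO \el{\tau}d\tau=1$, the sup-inequality becomes
$$-\la\om M_\la(s)\le \la C\,\om_{\hw}(s)+\la L_g s\, V_\la,\qquad V_\la:=\sup_{t}V_\la(t),$$
with $\om_{\hw}$ the modulus of uniform continuity of $\hw$; in the case of Assumption \ref{general-IVE1} we take $g=0$, so $L_g=0$. In that case we get $M_\la(s)\le (C/|\om|)\om_h(s)$, a bound independent of $\la$, which establishes the desired equicontinuity.

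\textbf{Step 3 (Case of Assumption \ref{general-IVE2}, bootstrap).} The identity $\ul(t)=\dl\intO\el{\tau}\ul(t)d\tau$ gives
$$\fk{\tfrac{d}{dt}}_\la\ul(t)=\dl\intO \dl\el{\tau}(\ul(t)-\ul(t-\tau))d\tau,$$
so $V_\la(t)\le \dl\intO \dl\el{\tau}M_\la(\tau)d\tau$. By Step 2 and Lipschitz continuity of $\hw$, $M_\la(\tau)\le \fk{CL_{\hw}/|\om|}\tau+\fk{L_g/|\om|}\tau V_\la$. Using $\dl\intO \dl \el{\tau}\tau\,d\tau=1$, I arrive at
$$V_\la\le \frac{CL_{\hw}}{|\om|}+\frac{L_g}{|\om|}V_\la.$$
The a priori finiteness $V_\la<\infty$ (because $\ul\in \bucrx$ for fixed $\la$) lets me move the second term to the left, and the hypothesis $L_g<-\om$ yields
$$V_\la\le \frac{CL_{\hw}}{|\om|-L_g}=:K_3.$$
Feeding this back into Step 2 gives $M_\la(s)\le K s$ with $K$ independent of $\la$, i.e.\ a common Lipschitz constant for $\bk{\ul:\la>0}$.

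\textbf{Main obstacle.} The delicate point is the bootstrap in Step 3: the closed inequality $V_\la\le A+(L_g/|\om|)V_\la$ only produces a uniform bound because $L_g/|\om|<1$, and one must first verify that $V_\la$ is a priori finite (so as to legitimately subtract $(L_g/|\om|)V_\la$). This finiteness follows directly from the definition of the Yosida approximation of the derivative together with $\ul\in \bucrx$, but it is the step that uses the stronger quantitative hypothesis $L_g<-\om$ in a non-trivial way.
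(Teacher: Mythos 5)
Your proof is correct and follows the same skeleton as the paper's: the same choice of $x_i,y_i$ in the perturbed stability inequality, the same a priori bounds $V_\la(t)\le \frac{2}{\la}\nri{\ul}$ and $V_\la(t)\le \frac{1}{\la^2}\int_0^\infty e^{-\tau/\la}\nrm{\ul(t)-\ul(t-\tau)}d\tau$, and the same bootstrap that closes only because $L_g<-\om$. The one genuine difference is how you resolve the convolution inequality: the paper keeps the estimate pointwise in $t$ (working with $K(t)=\sup_{s>0}\frac{1}{s}\nrm{\ul(t)-\ul(t-s)}$) and inverts the integral operator via the explicit positive resolvent of Proposition \ref{integral-ineq.}, whereas you take $\sup_{t\in\re}$ first, so that the kernel's unit mass collapses the convolution term into $M_\la(s)$ itself and the margin $-\la\om>0$ does the rest; this turns the whole argument into two scalar inequalities and avoids the resolvent machinery entirely, at the (irrelevant here) cost of discarding any pointwise information in $t$. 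Your flagged obstacle is the right one, and your justification of the a priori finiteness of $V_\la$ for fixed $\la$ is exactly what legitimizes the subtraction.
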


\begin{proof}
For given $\la >0$ we choose in Assumption (\ref{general-IVE2}) $t_1=t, t_2=s,$
\begin{eqnarray*}
x_1&=&u_{\la}(t-s), \\
y_1&=& \frac{1}{\la}\left( u_{\la}(t-s)-\frac{1}{\la}\intO \exp(-\frac{r}{\la})u_{\la}(t-s-r)dr\right),\\
x_2&=&u_{\la}(t), \\
y_2&=& \frac{1}{\la}\left( u_{\la}(t)-\frac{1}{\la}\intO \exp(-\frac{r}{\la})u_{\la}(t-r)dr\right).
\end{eqnarray*}
With $K_2=\nri{\ul}$ this leads to
\begin{eqnarray} \label{yi_inequality}
\lefteqn{(1-\la \om) \nrm{u_{\la}(t)-u_{\la}(t-s)}} \nonumber \\
& \le& \intO\res{r}\nrm{u_{\la}(t-r)-u_{\la}(t-s-r)} \nonumber \\
&&+ \la \nrm{\hw(t-s)-\hw(t)} \Lw(K_2) + \la \nrm{g(t-s)-g(t)}\nrm{y_2}.
\end{eqnarray}
In the case Assumption \ref{general-IVE1}, i.e. $g=0,$ Prop. \ref{integral-ineq.} leads to
\begin{eqnarray}\label{assu22_equicontinuity_ineq}
\nrm{\ul(t)-\ul(t-s)} &\le& \la \nrm{h(t-s)-h(t)} L(K_2) \nonumber \\
&&+\int_0^{\infty}\exp(\frac{\om r}{1-\la\om})\nrm{h(t-s-r)-h(t-r)}L(K_2)dr, 
\end{eqnarray}
which proves the equicontinuity of $\bk{\ul:\la >0}.$ Thus, we may consider the case $g\not=0.$

Noting that $\nrm{y_2} \le \frac{2}{\la}\nri{u_{\la}}$ for a fixed $\la >0,$ we obtain, together with Proposition \ref{integral-ineq.} ,
\begin{eqnarray*}
\lefteqn{\nrm{\ul(t)-\ul(t-s)}\le \la \fk{ L_h\btr{s}\Lw(K_2) + \frac{2K_2}{\la}L_g\btr{s}}}\\
&&+\int_0^{\infty}\exp(\frac{\om r}{1-\la\om})\fk{\nrm{\hw(t-r)-\hw(t-r-s)}\Lw(K_2)+  \frac{2K_2}{\la} \nrm{g(t-r)-g(t-r-s)}} dr. \\
\end{eqnarray*}

Thus $u_{\la}$ is Lipschitz.
Let 
$$ K(t):= \sup_{s\ge0}\frac{1}{s}\nrm{u_{\la}(t)-u_{\la}(t-s)}.$$
Then we have,
\begin{eqnarray*}
\nrm{\frac{1}{\la}\left( u_{\la}(t)-\frac{1}{\la}\intO \exp(-\frac{r}{\la})u_{\la}(t-r)dr\right) } 
&\le&\frac{1}{\la^2}\intO\exp(-\frac{r}{\la})\nrm{u_{\la}(t)-u_{\la}(t-r)}dr ,\\
&\le& K(t) \frac{1}{\la^2}\intO \exp(-\frac{r}{\la}) r dr ,\\
&\le& K(t).
\end{eqnarray*}
Dividing inequality (\ref{yi_inequality}) by $s,$ we obtain with $L_h$ the Lipschitz constant for $\hw,$
\begin{eqnarray*}
(1-\la \om) K(t) &\le& \intO \res{r} K(t-r)dr \\
&&+ \la \sup_{s\ge0}L_h \Lw(\nrm{u_{\la}(s)}) \\
&&+ \la \sup_{s\ge0}\frac{1}{s}\nrm{g(t)-g(t-s)}K(t).
\end{eqnarray*}
Rearranging, and letting $L_g$ the Lipschitz constant of $g,$ gives
\begin{eqnarray*}
K(t)&\le& \frac{1}{1-\la(\om+L_g)}\intO\res{r}K(t-r)dr \\
&&+\la \sup_{s\ge0}\frac{1}{s}\nrm{\hw(t)-\hw(t-s)}\Lw(\nrm{u_{\la}(s)}).
\end{eqnarray*}
Now, Proposition \ref{integral-ineq.} serves for the proof of boundedness of $K(t),$ and consequently for the equicontinuity of $\ul$.
\end{proof}

\begin{cor}\label{IVA0-IVE1-whole-line-equicont}
Let $I=\re,$ Assumption \ref{general-IVA0} be fulfilled and $\om <0.$ Further, let Assumption \ref{general-IVE1} be fulfilled with $h\in C_b(\re,X).$ Then the family $\bk{\ul:\la >0}$ is locally  equicontinuous on $\re.$ 
\end{cor}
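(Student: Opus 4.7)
The plan is to reuse the estimate (\ref{assu22_equicontinuity_ineq}) derived in Lemma \ref{uniform-bd-equi}, since the derivation of that inequality relies only on the boundedness of the control function together with the stability estimate of Assumption \ref{general-IVE1}; uniform continuity of $h$ was invoked there only to pass from a pointwise-in-$t$ estimate to a genuinely global modulus of continuity. In the present setting, Proposition \ref{ul-bounded} together with Remark \ref{only_bd_cont_control_ul_bd} already gives $\nri{\ul}\le K_2$ uniformly in $\la\in(0,1/\btr{\om}),$ so with $g=0$ one still has
\begin{eqnarray*}
\nrm{\ul(t)-\ul(t-s)}&\le&\la\,\nrm{h(t-s)-h(t)}\,L(K_2)\\
&&+\int_0^{\infty}\exp\fk{\frac{\om r}{1-\la\om}}\nrm{h(t-s-r)-h(t-r)}\,L(K_2)\,dr
\end{eqnarray*}
for every $t\in\re$ and every admissible $\la.$

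My first reduction is to make the exponential kernel uniform in $\la.$ Because $0<\la<1/\btr{\om}$ and $\om<0$ force $1<1-\la\om<2,$ it follows that $\om/(1-\la\om)\le\om/2<0,$ hence for all $r\ge 0,$
$$\exp\fk{\frac{\om r}{1-\la\om}}\le\exp\fk{\frac{\om r}{2}}=e^{-\btr{\om}r/2}.$$
With this bound the right-hand side of the inequality above no longer depends on $\la$ through the kernel, which opens the door to a standard compactness argument.

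Next, I would fix a compact interval $K=[-T,T]$ and an $\ep>0$ and choose $R>0$ so large that $2\nri{h}\,L(K_2)\int_R^{\infty}e^{-\btr{\om}r/2}\,dr<\ep/3;$ this handles the tail of the integral uniformly in $t$ and $\la.$ The key step is to exploit that $h\in C_b(\re,X)$ is automatically uniformly continuous on the compact enlarged set $K':=[-T-R-1,T+1].$ A modulus of continuity of $h$ on $K'$ supplies $\delta\in(0,1)$ such that, for $\btr{s}<\delta,$ $t\in K,$ and $r\in[0,R],$ both $\nrm{h(t-s)-h(t)}$ and $\nrm{h(t-s-r)-h(t-r)}$ are smaller than a prescribed $\eta>0.$ Plugging this back, the pointwise term is bounded by $\eta\,L(K_2)/\btr{\om},$ the integral over $[0,R]$ by $\eta\,L(K_2)\cdot 2/\btr{\om},$ and tuning $\eta$ forces each below $\ep/3.$ Combined with the tail bound this yields $\nrm{\ul(t)-\ul(t-s)}<\ep$ uniformly in $\la.$

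The main obstacle, in my view, is the $\la$-uniform domination of the exponential kernel, since it is what allows the tail bound to be chosen independently of $\la.$ Once that is in hand everything else is soft analysis: the combination of a bounded control function and continuity on compact subsets is already enough, given the integral estimate of Lemma \ref{uniform-bd-equi}.
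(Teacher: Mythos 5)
Your proposal is correct and follows essentially the same route as the paper: uniform boundedness of $\ul$ via Proposition \ref{ul-bounded} and Remark \ref{only_bd_cont_control_ul_bd}, followed by an application of inequality (\ref{assu22_equicontinuity_ineq}). The paper leaves the final step implicit, whereas you supply the (correct) details — the $\la$-uniform domination $\exp\fk{\om r/(1-\la\om)}\le e^{-\btr{\om}r/2}$, the tail cut-off at $R$, and uniform continuity of $h$ on the compact set $[-T-R-1,T+1]$ — which is exactly how local equicontinuity is extracted when $h$ is merely in $C_b(\re,X)$.
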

\begin{proof}
 Due to Remark \ref{only_bd_cont_control_ul_bd}, $\ul$ is uniformly bounded, hence we can apply the inequality (\ref{assu22_equicontinuity_ineq}).
\end{proof}

\begin{lem} \label{the_ineq.}
Let $I=\re,$ Assumption \ref{general-IVA0}, and either Assumption \ref{general-IVE1} or Assumption \ref{general-IVE2} be fulfilled ,$\om <0$ and $u_{\la}$ and $u_{\mu}$ be approximants to equation (\ref{int_line-Yosida-Approx}).
Then
\begin{eqnarray*}
\lefteqn{(\la+\mu-\la \mu \om)\nrm{u_{\la}(t)-u_{\mu}(s)} \le \la \intO\resm{r}\nrm{u_{\la}(t)-u_{\mu}(s-r)}dr} \\
&&+\mu \intO\res{r}\nrm{u_{\la}(t-r)-u_{\mu}(s)} dr \\
&& + \la \mu \nrm{\hw(t)-\hw(s)}\Lw(\nrm{u_{\mu}(s)}) \\
&&+\la \mu \nrm{g(t)-g(s)}\nrm{\frac{1}{\mu}\fk{ u_{\mu}(s)-\frac{1}{\mu}\intO \exp(-\frac{r}{\mu})u_{\mu}(s-r)dr}}.
\end{eqnarray*}
\end{lem}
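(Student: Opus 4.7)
The plan is to apply the stability inequality (\ref{resolvent_t_stability}) to the pair produced by the two approximants, with parameter $\theta := \la\mu/\Lambda$ where $\Lambda := \la+\mu-\la\mu\om$. From equation (\ref{int_line-Yosida-Approx}) the approximant $\ul$ satisfies $\bigl(\ul(t),\fk{\frac{d}{dt}}_{\la}\ul(t)-\om\ul(t)\bigr)\in A(t)$, and the analogous membership holds for $\um$ at time $s$. Inserting these pairs into (\ref{resolvent_t_stability}) — so that the shifted elements $y_i+\om x_i$ are exactly the Yosida derivatives — yields
\begin{eqnarray*}
(1-\theta\om)\nrm{\ul(t)-\um(s)}
&\le& \nrm{\ul(t)-\um(s) - \theta\bigl[\fk{\frac{d}{dt}}_{\la}\ul(t) - \fk{\frac{d}{ds}}_{\mu}\um(s)\bigr]} \\
&& + \theta\nrm{\hw(t)-\hw(s)}\Lw(\nrm{\um(s)}) + \theta\nrm{g(t)-g(s)}\nrm{\fk{\frac{d}{ds}}_{\mu}\um(s)}.
\end{eqnarray*}

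The next step is a direct computation of the vector inside the first norm. Using $\fk{\frac{d}{dt}}_{\la}\ul(t) = \frac{1}{\la}\bigl(\ul(t) - \frac{1}{\la}\intO \nres{r}\ul(t-r)\,dr\bigr)$ and the identities $\theta/\la = \mu/\Lambda$, $\theta/\mu = \la/\Lambda$, a straightforward rearrangement that isolates the $\om$-contribution produces
\begin{eqnarray*}
\lefteqn{\ul(t)-\um(s) - \theta\bigl[\fk{\frac{d}{dt}}_{\la}\ul(t) - \fk{\frac{d}{ds}}_{\mu}\um(s)\bigr]} \\
&=& \frac{\la}{\Lambda}\fk{\ul(t) - \frac{1}{\mu}\intO \nresm{r}\um(s-r)\,dr}
    + \frac{\mu}{\Lambda}\fk{\frac{1}{\la}\intO \nres{r}\ul(t-r)\,dr - \um(s)}
    - \frac{\la\mu\om}{\Lambda}\bigl(\ul(t)-\um(s)\bigr).
\end{eqnarray*}
Taking norms and using the triangle inequality gives an extra summand $(\la\mu|\om|/\Lambda)\nrm{\ul(t)-\um(s)}$ on the right side of the stability estimate.

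Here the hypothesis $\om<0$ is essential: one then has $\la\mu|\om|/\Lambda = -\theta\om$, and therefore the exact coefficient identity
$$(1-\theta\om) - \la\mu|\om|/\Lambda = 1$$
holds. Moving the extra term to the left-hand side absorbs the $(1-\theta\om)$ factor entirely, yielding
\begin{eqnarray*}
\nrm{\ul(t)-\um(s)}
&\le& \frac{\la}{\Lambda}\nrm{\ul(t) - \frac{1}{\mu}\intO \nresm{r}\um(s-r)\,dr}
      + \frac{\mu}{\Lambda}\nrm{\frac{1}{\la}\intO \nres{r}\ul(t-r)\,dr - \um(s)} \\
&& + \frac{\la\mu}{\Lambda}\nrm{\hw(t)-\hw(s)}\Lw(\nrm{\um(s)})
   + \frac{\la\mu}{\Lambda}\nrm{g(t)-g(s)}\nrm{\fk{\frac{d}{ds}}_{\mu}\um(s)}.
\end{eqnarray*}

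To conclude, the kernel identity $\frac{1}{\mu}\intO \nresm{r}\,dr = 1$ allows me to write $\ul(t) - \frac{1}{\mu}\intO \nresm{r}\um(s-r)\,dr = \intO \resm{r}\bigl(\ul(t)-\um(s-r)\bigr)\,dr$, and analogously for the other bracket. Multiplying through by $\Lambda$ then reproduces the claimed inequality; in the case of Assumption \ref{general-IVE1} the convention $g=0$ makes the last term vanish automatically. The main difficulty is to pick the correct parameter $\theta = \la\mu/\Lambda$ and to see the algebraic regrouping that combines the dissipative coefficient $(1-\theta\om)$ with the stray $\om$-contribution from the triangle inequality to exactly $1$; this sharp cancellation is precisely what forces the sign condition $\om<0$.
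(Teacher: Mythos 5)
Your argument is correct and yields exactly the stated inequality, but it follows a somewhat different route from the paper's. The paper's proof is a one-line application of the \emph{two-parameter} dissipativity inequality \cite[Prop.\ 6.5, Ineq.\ (6.9), p.\ 187]{Ito_Kappel} (restated as inequality (\ref{eq_la_mu}) of Proposition \ref{lambda_plus_mu_ineq}) with $x_1=\ul(t)$, $x_2=\um(s)$ and $y_1,y_2$ the two Yosida derivatives: since $x_1-\la y_1=\frac{1}{\la}\intO\el{r}\ul(t-r)\,dr$ and $x_2-\mu y_2=\frac{1}{\mu}\intO\nresm{r}\um(s-r)\,dr$, the coefficient $\la+\mu-\la\mu\om$ and the two convolution terms appear immediately, and no sign condition on $\om$ is used at this step (only the standing restriction $\la,\mu<1/\btr{\om}$). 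You instead start from the one-parameter inequality (\ref{resolvent_t_stability}) with the composite parameter $\theta=\la\mu/\Lambda$; your regrouping of $\ul(t)-\um(s)-\theta\bigl[\fk{\tfrac{d}{dt}}_{\la}\ul(t)-\fk{\tfrac{d}{ds}}_{\mu}\um(s)\bigr]$ checks out (the coefficients $\la(1-\mu\om)/\Lambda$ and $\mu(1-\la\om)/\Lambda$ split exactly as you claim), and the absorption of the stray term $-\tfrac{\la\mu\om}{\Lambda}(\ul(t)-\um(s))$ via $(1-\theta\om)+\theta\om=1$ is valid because $\om<0$. The only caveat concerns your closing remark: the sign condition $\om<0$ is not what makes the \emph{lemma} true — it is an artifact of your triangle-inequality step (for $\om>0$ your route would only give the weaker constant $\la+\mu-3\la\mu\om$ on the left, whereas the two-parameter route gives the stated constant for any sign of $\om$). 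Since $\om<0$ is a hypothesis of the lemma anyway, this does not affect correctness; it is just worth knowing that the paper's derivation is both shorter and slightly more general at this point.
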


\begin{proof}
Apply \cite[Prop. 6.5 Ineq.(6.9), p. 187]{Ito_Kappel} with $t_1=t,t_2=s$,
\begin{eqnarray*}
x_1 &:=& u_{\la}(t), \\
x_2 &:=& u_{\mu}(s),\\
y_1 &:=& \frac{1}{\la}\fk{ u_{\la}(t)-\frac{1}{\la}\intO \exp(-\frac{r}{\la})u_{\la}(t-r)dr}, \\
y_2&:=&\frac{1}{\mu}\fk{ u_{\mu}(s)-\frac{1}{\mu}\intO \exp(-\frac{r}{\mu})u_{\mu}(s-r)dr}.
\end{eqnarray*}
Rearranging, and the identity $\frac{1}{\mu}\intO \exp(-\frac{r}{\mu})=1,$ serve for the proof.
\end{proof}

\begin{lem} \label{Cauchy-solutions-lem}

Let $I=\re,$ then we have:
\begin{enumerate}
\item If Assumption \ref{general-IVA0} and Assumption \ref{general-IVE1} with $\om < 0$ are fulfilled, then the Yosida approximants $\fk{u_{\la}:\la >0}$ to equation (\ref{int_line-Yosida-Approx}) are Cauchy in $BUC(\re,X,$ when $\la \to 0+.$
\item If Assumption \ref{general-IVA0} and Assumption \ref{general-IVE2} are fullfilled, and the Lipschitz constant of $g$ in Assumption \ref{general-IVE2} is less than $-\om,$ then the Yosida approximants $\fk{u_{\la}:\la >0}$ to equation (\ref{int_line-Yosida-Approx}) are Cauchy in \bucrx when $\la \to 0+ $.
\end{enumerate}
\end{lem}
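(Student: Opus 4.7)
The plan is to mimic the half-line argument in Lemma \ref{IVA0-IVE2-CauchyConvergence-lem}, now on all of $\re$ and without the initial-value contribution. Setting $F(\la,\mu,t,s):=\nrm{u_\la(t)-u_\mu(s)}$, Lemma \ref{the_ineq.} furnishes the two-dimensional convolution inequality
$$\lpmplmw F(\la,\mu,t,s)\le \la\intO\resm{r}F(\la,\mu,t,s-r)dr + \mu\intO\res{r}F(\la,\mu,t-r,s)dr + G(t,s),$$
with forcing term
$$G(t,s)=\la\mu\nrm{\hw(t)-\hw(s)}\Lw(\nrm{u_\mu(s)}) + \la\mu\nrm{g(t)-g(s)}\nrm{y_2}.$$
Proposition \ref{ul-bounded} gives $\nrm{u_\mu(s)}\le K_1$ uniformly in $\mu$ and $s$, and Lemma \ref{uniform-bd-equi} provides the uniform bound $\nrm{y_2}\le K_3$ needed in the Assumption \ref{general-IVE2} case; under Assumption \ref{general-IVE1} one simply takes $g=0$ and $\Lw=L$. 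Consequently $G(t,s)\le C\la\mu\fk{\nrm{\hw(t)-\hw(s)}+\nrm{g(t)-g(s)}}$ for a constant $C$ independent of $\la,\mu,t,s$.

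Next I would solve this integral inequality by the same representation formula used in the half-line proof, producing a pointwise bound for $F(\la,\mu,t,s)$ in terms of convolutions of $G$ against the kernels $\exp(\llmw\cdot)$, $\exp(\mlmw\cdot)$, and $\IO{xy}\exp(\mlmw x+\llmw y)/\Lambda^2$, together with the two analogues involving $\partial_x$ and $\partial_y$ of the Bessel factor. The sign hypothesis $\om<0$ (respectively $L_g<-\om$ in the Lipschitz case) makes the exponents $(\la\om-1)/\Lambda$ and $(\mu\om-1)/\Lambda$ negative and bounded away from zero for small $\la,\mu$, so every integral converges at $\infty$ and the whole-line analogue of the representation is well-defined.

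Setting $s=t$ and performing the substitution $x=\Lambda x'$, $y=\Lambda y'$, each Bessel-kernel integral reduces, up to an $O(1)$ prefactor, to
$$\intO\intO I_0(2\sqrt{x'y'})e^{-(1-\mu\om)x'-(1-\la\om)y'}\nrm{\hw(t-\Lambda x')-\hw(t-\Lambda y')}\,dy'\,dx',$$
and similarly for $g$. The classical asymptotics $I_0(u)\sim e^u/\sqrt{2\pi u}$ furnish the integrable majorant $I_0(2\sqrt{x'y'})e^{-x'-y'}=O\fk{(x'y')^{-1/4}\exp(-(\sqrt{x'}-\sqrt{y'})^2)}$ on $[0,\infty)^2$, while the uniform continuity of $\hw$ and $g$ makes the integrand tend to $0$ pointwise and uniformly in $t$ as $\Lambda\to 0$. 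Dominated convergence then gives $F(\la,\mu,t,t)\to 0$ uniformly in $t\in\re$, which is the desired Cauchy property in $\bucrx$. Convolutions against the simpler single-variable exponential kernels are handled analogously but are strictly easier.

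The main obstacle is producing the estimates uniformly in $t\in\re$. In the half-line proof the bounds relied tacitly on the compactness of $[0,T]$ (e.g. when integrating $\btr{t-s}$ against the Bessel kernel); on $\re$ the only available replacement is the \emph{uniform} continuity of the control functions $\hw$ and $g$, which must be matched against the tails of the Bessel kernel through the rescaling $x,y\mapsto\Lambda x,\Lambda y$. The sign conditions $\om<0$ and $L_g<-\om$ are precisely what is needed to keep the rescaled kernel an integrable majorant uniformly in small $\la,\mu$, and hence to make dominated convergence applicable.
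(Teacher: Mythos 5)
Your setup coincides with the paper's: the same two--dimensional inequality from Lemma \ref{the_ineq.}, the bounds $\nrm{u_\mu(s)}\le K_1$ and $\nrm{y_2}\le K_3$ from Proposition \ref{ul-bounded} and Lemma \ref{uniform-bd-equi}, and the whole--line resolvent of \cite[Lemma 4.2]{Kreulichbd} (Lemma \ref{main-ineq}). The gap is in your final step. The function you offer as an integrable majorant, $I_0(2\sqrt{xy})e^{-x-y}=O\fk{(xy)^{-1/4}\exp(-(\sqrt{x}-\sqrt{y})^2)}$, is \emph{not} integrable over $[0,\infty)^2$: the substitution $x=u^2$, $y=v^2$ turns its integral into $4\int\int\sqrt{uv}\,e^{-(u-v)^2}\,du\,dv=\infty$, and consistently Lemma \ref{SomeIntegrals} (\ref{bessel_laplace_trans}) shows that the $\la,\mu$--dependent kernel integrates to $\frac{1}{\btr{\om}\Lambda}\to\infty$. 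Hence dominated convergence is simply not available, and pointwise convergence of the integrand to zero proves nothing: after rescaling, the kernel's mass concentrates along the diagonal but escapes to infinity in the $x'$--variable, precisely where a fixed dominating function cannot control $\nrm{\hw(t-\Lambda x')-\hw(t-\Lambda y')}$.

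The paper never invokes dominated convergence. Under Assumption \ref{general-IVE2} it uses the Lipschitz continuity of $\hw,g$ to replace the forcing term by $K\la\mu\btr{t-s}$, so the critical integral acquires the factor $\btr{x-y}$, which vanishes exactly where the kernel's mass sits; the resulting limit $\la\mu\int\int\btr{x-y}I_0(2\sqrt{xy})e^{-x-y}e^{-\om(\la x+\mu y)}dydx\to 0$ is then proved by hand in Lemma \ref{SomeIntegrals} (\ref{infinite_lipschitz_bessel_convergence}) via domain splitting and the substitution $u=\sqrt{y}-\sqrt{x}$. Under Assumption \ref{general-IVE1}, where $h$ is only uniformly continuous, the paper first mollifies, $h_{\ep}\to h$ uniformly on $\re$, controls the error term $\nrm{h-h_{\ep}}$ by the uniform bound of Lemma \ref{SomeIntegrals} (\ref{bessel_laplace_trans}), and then applies the Lipschitz case to $h_{\ep}$. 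Your guiding idea --- matching uniform continuity against the concentration of the kernel near the diagonal --- is correct, but to make it rigorous you must either follow this mollification route or replace dominated convergence by an explicit splitting into $\bk{\btr{x-y}\le\delta}$ (where uniform continuity yields $\ep$ and the product of the prefactor $\frac{\la\mu}{\Lambda}$ with the total mass $\frac{1}{\btr{\om}\Lambda}$ stays bounded) and $\bk{\btr{x-y}>\delta}$ (where a quantitative Gaussian tail estimate for $I_0(2\sqrt{xy})e^{-x-y}$ is required); the latter is essentially a reproof of Lemma \ref{SomeIntegrals} (\ref{infinite_lipschitz_bessel_convergence}).
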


\begin{proof}
 We start with the case Assumption \ref{general-IVA0} and Assumption \ref{general-IVE2}. 
 Using the inequality of Lemma \ref{the_ineq.}, and Lemma \ref{uniform-bd-equi} gives,
\begin{eqnarray*}
\lefteqn{(\la+\mu-\la \mu \om)\nrm{u_{\la}(t)-u_{\mu}(s)}}\\
&\le& \la \intO\resm{r}\nrm{u_{\la}(t)-u_{\mu}(s-r)}dr
+\mu \intO\res{r}\nrm{u_{\la}(t-r)-u_{\mu}(s)} dr \\
&& + \la \mu \nrm{\hw(t)-\hw(s)}\Lw(K_2)+\la \mu \nrm{g(t)-g(s)}K_3. 
\end{eqnarray*} 
The resolvent is given by \cite[Lemma 4.2, p.648]{Kreulichbd}.
Estimating via Lipschitz continuity the $u$ in the cited paper to be considered becomes
$$ 
u(t,s)=\la \mu K\btr{t-s}.
$$
Due to the boundedness of $\hw,g$ and \cite[Remark 4.1 p.647]{Kreulichbd}, the main term in the resolvent after substituting $x:=x/\Lambda, y:=y/\Lambda$ is
$$
\la\mu\int_0^{\infty}\int_0^{\infty}I_0(\sqrt{xy})\exp(-(1-\mu\om)x-(1-\la\om)y)\btr{x-y}dxdy.
$$
Consequently, Lemma \ref{SomeIntegrals} (\ref{infinite_lipschitz_bessel_convergence}) serves for the proof.
\end{proof}

The Assumption \ref{general-IVE1} requires a reapplication of the arguments, together with an approximation of the control function with Lipschitz $h_{\ep}.$ 
By uniform continuity the mollified $h_{\ep} \to h$  uniformly on  $\re.$ Noting $h_{\ep}$ Lipschitz, we can apply \cite[Lemma 4.2 (1)]{Kreulichbd} and Lemma \ref{SomeIntegrals} (\ref{infinite_lipschitz_bessel_convergence}) to obtain
$$
\frac{\mu\la}{\Lambda^3}\int_0^{\infty}\int_0^{\infty} \IO{xy}\exp\fk{ \mlmw x + \llmw y}\btr{h(t-x)-h(t-y)}dydx \to 0
$$
for $\la,\mu \to 0$. \\

\begin{lem} \label{cpt_open_convergence}
Let $I=\re.$ If in the case of Assumption \ref{general-IVA0}, and Assumption \ref{general-IVE1},  where the control function $h\in C_b(\re,X),$ i.e. only bounded continuous, $\ul$ is locally uniformly convergent.
\end{lem}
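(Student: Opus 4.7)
The plan is to revisit the IVE1 Cauchy estimate of Lemma \ref{Cauchy-solutions-lem} and replace its global mollification of $h$ by a localized near/far split on each compact interval $[a,b]\subset\re$. By Remark \ref{only_bd_cont_control_ul_bd}, Proposition \ref{ul-bounded} still gives $\nri{\ul}\le K_2$ even when $h$ is only bounded continuous, and Corollary \ref{IVA0-IVE1-whole-line-equicont} already provides local equicontinuity of $\{\ul:\la>0\}$. What remains is to show that, on every compact $[a,b]\subset\re$, $\ul$ is uniformly Cauchy as $\la\to 0+$; local uniform convergence on $\re$ then follows.

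Starting from Lemma \ref{the_ineq.} with $g=0$ and applying the two-dimensional resolvent expansion of \cite[Lemma 4.2]{Kreulichbd} exactly as in the proof of Lemma \ref{Cauchy-solutions-lem}, the dominant term of the bound on $\nrm{\ul(t)-\um(t)}$ for $t\in[a,b]$ takes the form
\begin{eqnarray*}
J_{\la,\mu}(t) &:=& \frac{\la\mu}{\Lambda^3}L(K_2)\int_0^{\infty}\!\!\int_0^{\infty}\IO{xy}\exp\fk{\mlmw x+\llmw y}\nrm{h(t-x)-h(t-y)}\,dy\,dx.
\end{eqnarray*}
Given $\ep>0$, I would fix $M>0$ and split this integral at $\max(x,y)=M$.

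On the far part $\{\max(x,y)>M\}$, I bound $\nrm{h(t-x)-h(t-y)}\le 2\nri{h}$ and use the Laplace transform identity $\int_0^\infty\int_0^\infty I_0(2\sqrt{xy})e^{-\alpha x-\beta y}\,dx\,dy=1/(\alpha\beta-1)$ (valid for $\alpha\beta>1$) with $\alpha=1-\mu\om,\ \beta=1-\la\om$, so that $\alpha\beta-1=-\om\Lambda>0$. A direct tail computation shows that $\la\mu/\Lambda^3$ times the tail contribution decays like $e^{-\btr{\om}M}$ uniformly in $\la,\mu$ small, making the far contribution $<\ep/2$ for $M$ large. On the near part $\{\max(x,y)\le M\}$ the arguments $t-x,\,t-y$ lie in the compact interval $[a-M,b]$, on which $h$ is uniformly continuous. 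Mollify the restriction of $h$ to a slightly larger interval (and cut off outside) to obtain a bounded Lipschitz $h_\eta:\re\to X$ with $\sup_{s\in[a-M,b]}\nrm{h(s)-h_\eta(s)}<\eta$. The splitting $h=(h-h_\eta)+h_\eta$ in the near integral yields a $2\eta$-times bounded-kernel contribution (Lemma \ref{SomeIntegrals} (\ref{bd_bessel_convergence})), arbitrarily small for $\eta$ small, plus a term in which $\nrm{h_\eta(t-x)-h_\eta(t-y)}\le L_\eta\btr{x-y}$; that last term vanishes in the limit $\la,\mu\to 0$ by Lemma \ref{SomeIntegrals} (\ref{infinite_lipschitz_bessel_convergence}). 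Together with the routine initial-value-type terms handled in Lemma \ref{Cauchy-solutions-lem}, these bounds give Cauchy convergence on $[a,b]$.

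The main obstacle is the far-region estimate: unlike the BUC case, a single global mollification of $h$ is unavailable, so the tail has to be dominated purely by quantitative decay of the Bessel--Laplace kernel. This works precisely because $\om<0$ renders both parameters $1-\mu\om,\ 1-\la\om$ strictly greater than one, which yields a uniform exponential factor of the form $e^{\om\Lambda M/(1-\la\om)}$ in the tail bound. Once this uniform tail control is in place, the near-region analysis is essentially the BUC proof localized to the compact interval $[a-M,b]$.
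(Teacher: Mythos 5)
Your argument is correct and follows essentially the same route as the paper's: the paper packages your near/far split and uniform tail estimate into Lemma \ref{SomeIntegrals} (\ref{bessel_uniform_integrability})--(\ref{bessel_cpt_open_convergence}) and then treats the locally mollified, Lipschitz $h_{\ep}$ exactly as you do. One small correction: your closing tail exponent should be $\exp\fk{\om M/(1-\la\om)}$ rather than $\exp\fk{\om\Lambda M/(1-\la\om)}$ --- the extra factor $\Lambda$ would make the bound degenerate as $\la,\mu\to 0$ --- while your earlier (and correct) statement that the tail decays like $e^{-\btr{\om}M}$ uniformly in small $\la,\mu$ is what the computation in Lemma \ref{SomeIntegrals} (\ref{bessel_uniform_integrability}) actually yields.
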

\begin{proof}
 Using $h_{\ep}\to h$ uniformly on bounded sets, Lemma \ref{SomeIntegrals} (\ref{bessel_cpt_open_convergence}) with
$$g_{\ep}(t,s):=\btr{h_{\ep}(t)-h_{\ep}(s)} \lra g(t,s):=\btr{h(t)-h(s)},$$ 
the proof finishes, using the local Lipschitz continuity of $h_{\ep},$ and Lemma \ref{SomeIntegrals} (\ref{bd_lipschitz_bessel_convergence}).
\end{proof}

\section{Proofs of the Main Results}

After the technical preliminaries we are in the situation to prove that the solution found as limits of the Yosida approximation are integral ones.

\begin{proof}[Proof of Theorem \ref{Cauchy-solutions}]
From Lemma \ref{Cauchy-solutions-lem} we learned that the Yosida approximations in the given cases are uniformly convergent on $\re.$ We will start with the proof of item (2).
For given $t,r \in \re,$ with $t>r$ we have:
\begin{eqnarray*}
\lefteqn{\dl\fk{\ul(t)-\dl\int_0^{t-r}\el{\tau}\ul(t-\tau)d\tau}} \\
&&+\dl \el{t-r}\ul(r) 
+ \frac{1}{\la^2} \int_{t-r}^{\infty} \el{\tau} \fk{\ul(t-\tau)-\ul(r)}d\tau \\
&\in& A(t)\ul(t)+\om\ul(t).
\end{eqnarray*}
As
$$ \dl\fk{x-\dl\int_0^{t-s}\el{\tau}xd\tau}+\dl\el{t-s}x=0,$$
we have
\begin{eqnarray*}
\dl\fk{x-\dl\int_0^{t-s}\el{\tau}xd\tau}+\dl\el{t-s}x &\in& A(r)x+\om x -y,
\end{eqnarray*}
for a $y\in A(r)x+\om x.$
Applying \cite[Prop. 6.5]{Ito_Kappel}, we obtain in the case of Assumption \ref{general-IVE2},

\begin{eqnarray*}
\lefteqn{[\dl \fk{ \ul(t)-\dl \int_0^{t-r}\el{\tau}\ul(t-\tau)d\tau } +\dl \el{t-r} \ul(r)} \\
&&+\frac{1}{\la^2} \int_{t-r}^{\infty} \el{\tau} \fk{\ul(t-\tau)-\ul(r)}d\tau \\
&&-\dl\fk{x-\dl\int_0^{t-s}\el{\tau}x d\tau}-\dl\el{t-s}x -y,\ul(t)-x]_{-} \\
&\le& \om\nrm{\ul(t)-x}+ \nrm{\hw(t)-\hw(r)}\Lw(\nrm{x})+ \nrm{g(t)-g(r)}\nrm{y}.
\end{eqnarray*}

Multiplying by $\la,$ rearranging, and the substitution $\tau=t-\tau$ lead to
\begin{eqnarray*}
\nrm{\ul(t)-x}&\le& \dl \int_r^t \el{t-\tau}\nrm{\ul(\tau)-x}d\tau +\la \el{t-r}\nrm{\ul(r)-x} \\
&&+\dl \int_{t-r}^{\infty} \el{\tau} \fk{\ul(t-\tau)-\ul(r)}d\tau + \la [y,\ul(t)-x]_{+} \\
&&+\la \om\nrm{\ul(t)-x}+ \la \nrm{\hw(t)-\hw(r)}\Lw(\nrm{x})+ \la \nrm{g(t)-g(r)}\nrm{y}.
\end{eqnarray*}
Thus, we are in the situation to apply Proposition \ref{ineq-finite-integral}. Hence

\begin{eqnarray}
\nrm{\ul(t)-x}&\le& \la \el{t-r}\nrm{\ul(r)-x} +\la \om\nrm{\ul(t)-x} \nonumber\\
&&+\dl \int_{t-r}^{\infty} \el{\tau} \nrm{\ul(t-\tau)-\ul(r)}d\tau \label{1} \\ 
&& + \la [y,\ul(t)-x]_{+} \nonumber \\
&&+ \la \nrm{\hw(t)-\hw(r)}\Lw(\nrm{x})+ \la \nrm{g(t)-g(r)}\nrm{y} \nonumber \\
&&+ \int_r^t\fk{\om\nrm{\ul(\tau)-x} +[y,\ul(\tau)-x]_{+}}d\tau \nonumber \\
&&+\int_r^t\fk{\nrm{\hw(\tau)-\hw(r)}\Lw(\nrm{x})+ \nrm{g(\tau)-g(r)}\nrm{y}}d\tau \nonumber \\
&&+\dl\int_r^t\el{\tau-r}d\tau\nrm{\ul(r)-x} \label{2} \\
&&+\frac{1}{\la^2}\int_r^t\int_0^{\infty}\el{r-\mu-\nu}\nrm{\ul(r-\mu)-\ul(r)}d\mu d\nu .\label{3}
\end{eqnarray}
As $\la \to 0+,$ we have to consider the terms (\ref{1}), (\ref{2}), and (\ref{3}).

Term (\ref{1}): 
\begin{eqnarray*}
\dl \int_{t-r}^{\infty} \el{\tau} \nrm{\ul(t-\tau)-\ul(r)}d\tau &\le& 2\el{t-r}\sup_{t\in\re}\nrm{\ul(t)} ,\\
&& \lra 0.
\end{eqnarray*}
Term (\ref{2}):
\begin{eqnarray*}
\dl\int_r^t\el{\tau-r}d\tau\nrm{\ul(r)-x} &=& (1-\el{t-r})\nrm{\ul(r)-x}, \\
&& \lra \nrm{\ul(r)-x},
\end{eqnarray*}
when $\la\to 0+.$ \newline
Term(\ref{3}): Due to Lemma \ref{uniform-bd-equi} we have
equicontinuity on $\re$ of  $\ul$ for $\la >0.$
Thus, for given $\ep > 0,$ there exist  $ \delta > 0 ,$ such that, whenever $\btr{r-s}<\delta,$ we have
$$\nrm{\ul(r)-\ul(s)}<\ep, \ \ \forall \la >0 .$$
In consequence, we have:
\begin{eqnarray*}
\lefteqn{\frac{1}{\la^2}\int_r^t\int_0^{\infty}\el{r-\mu-\nu}\nrm{\ul(r-\mu)-\ul(r)}d\mu d\nu} \\
&\le& \frac{\ep}{\la^2}\int_r^t\int_0^{\delta}\el{r-\mu-\nu} d\mu d\nu  \\
&&+\frac{1}{\la^2}\int_r^t\int_{\delta}^{\infty}\el{r-\mu-\nu}2\nri{\ul}d\mu d\nu ,\\
&\le&\ep (1-\el{r-t})(1-\el{\delta}) \\
&&+2(1-\el{r-t})\el{\delta}\nri{\ul}.
\end{eqnarray*}
Thus, as $\la \to 0+,$ we obtain
$$
\limsup_{\la\to 0+}\frac{1}{\la^2}\int_r^t\int_0^{\infty}\el{r-\mu-\nu}\nrm{\ul(r-\mu)-\ul(r)}d\mu d\nu =0.
$$
Passing to the limit we obtain the desired inequality. \\
To prove item (1) note that only boundedness and local equicontinuity was used, what is proved in Propostion \ref{ul-bounded} and Corollary \ref{IVA0-IVE1-whole-line-equicont}.
\end{proof}

\begin{proof}[Proof of Theorem \ref{IVA0-IVE1-CauchyConvergence} and \ref{IVA0-IVE2-CauchyConvergence}]

For given $t,r \in \rep,$ with $t>r,$ $\ul$ the Yosida approximation, and from Lemma \ref{IVA0-IVE2-CauchyConvergence-lem} and Corollary \ref{IVA0-IVE1-CauchyConvergence-cor} we learned that $\ul$ is locally uniformly Cauchy. Further, we have by the definition of the approximation
\begin{eqnarray*}
\lefteqn{\dl\fk{\ul(t)-\dl\int_0^{t-r}\el{\tau}\ul(t-\tau)d\tau} +\dl \el{t-r}\ul(r) }\\
&&+ \frac{1}{\la^2} \int_{t-r}^{t} \el{\tau} \fk{\ul(t-\tau)-\ul(r)}d\tau \\
&&+\frac{1}{\la}\el{t}(u_0-\ul(r)) \\
&\in& A(t)\ul(t)+\om\ul(t).
\end{eqnarray*}
Now, the same method as in the whole line case applies.
\end{proof}

\section{Proofs: Inhomgeneous Problem}

For the proof of Theorem \ref{buc_int_sol}, we need some preliminary results.
The inhomogeneous case will be viewed as a special case of homogeneous one. We define
$$ B(t):=\bk{[x,y+f(t)]:[x,y] \in A(t)}.$$
Then equation (\ref{whole-line-inhomogeneous-equation}) is equivalent to
\begin{equation} \label{whole-line-B-inhomogeneous-equation}
u^{\prime}(t)\in B(t)u(t)+\om u(t), t \in \re,
\end{equation}

and (\ref{half-line-inhomogeneous-equation}) to
\begin{equation} \label{half-line-B-inhomogeneous-equation}
u^{\prime}(t)\in B(t)u(t)+\om u(t), u(0)=u_0, t \in \rep.
\end{equation}

To apply the previous results we have to prove that $B(t)$ satisfies Assumption \ref{general-IVE1}, or Assumption \ref{general-IVE2} if $A(t)$ does.

\begin{pro} \label{B(t)+f(t)E2)}
\begin{enumerate}
\item Let $A(t)$ fulfill Assumptions \ref{general-IVA0}, \ref{general-IVE1}, and let $f\in BUC(\rep,X).$ Then $B(t)$ fulfills Assumptions \ref{general-IVA0} and \ref{general-IVE1}.
\item Let $A(t)$ fulfill Assumptions \ref{general-IVA0}, \ref{general-IVE2}, and let $f$ be bounded and Lipschitz. Then $B(t)$ fulfills Assumptions \ref{general-IVA0} and \ref{general-IVE2}.
\end{enumerate}
\end{pro}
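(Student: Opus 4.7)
The plan is to verify, for $B(t) := \{[x, y+f(t)] : [x,y] \in A(t)\}$, both m-dissipativity and whichever of Assumption \ref{general-IVE1} or \ref{general-IVE2} is assumed of $A$. The m-dissipativity transfers for free: for fixed $t$, $B(t)$ is a constant translate of $A(t)$ in the $y$-component, so $R(I-\la B(t)) = R(I-\la A(t)) + \la f(t) = X$, and the dissipativity inequality for $A(t)$ passes to $B(t)$ at once since $\|x_1-x_2-\la((y_1+f(t))-(y_2+f(t)))\| = \|x_1-x_2-\la(y_1-y_2)\|$. For the control-type inequalities, I would use the triangle inequality to peel off $f(t_1)-f(t_2)$ from the $y$-difference and then absorb the extra terms into an enlarged control function $\tilde h$.

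For part (1), given $[x_i,y_i+f(t_i)] \in B(t_i)$, I write
\[
\|x_1-x_2-\la(y_1-y_2)\| \le \|x_1-x_2-\la((y_1+f(t_1))-(y_2+f(t_2)))\| + \la\|f(t_1)-f(t_2)\|,
\]
insert this into Assumption \ref{general-IVE1} for $A(t)$, and define $\tilde h(t) := (h(t), f(t))$ with the $\ell^1$ norm on $X\times X$ together with $\tilde L(r) := L(r)+1$. Then $\|\tilde h(t_1)-\tilde h(t_2)\|\tilde L(\|x_2\|)$ dominates both $\|h(t_1)-h(t_2)\|L(\|x_2\|)$ and $\|f(t_1)-f(t_2)\|$, which yields Assumption \ref{general-IVE1} for $B(t)$. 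Uniform continuity of $\tilde h$ is immediate from $h, f \in BUC(I,X)$.

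For part (2), the same triangle-inequality step produces an extra $\la\|f(t_1)-f(t_2)\|$; moreover, the $\la\|g(t_1)-g(t_2)\|\|y_2\|$ term in Assumption \ref{general-IVE2} for $A$ involves the $A$-second-component $y_2$, whereas for $B(t)$ the natural quantity is $y_2+f(t_2)$. The split
\[
\|y_2\| \le \|y_2+f(t_2)\| + \|f\|_\infty
\]
is the key step, and it is precisely where boundedness of $f$ is used. Setting $\tilde g := g$ and $\tilde h(t) := (h(t), f(t), \|f\|_\infty g(t))$ with $\ell^1$-type norm and $\tilde L(r) := L(r)+1$ then yields Assumption \ref{general-IVE2} for $B(t)$; the enlarged $\tilde h$ is bounded and Lipschitz because $h$, $g$ and $f$ all are.

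The main subtlety to be careful about is that $\tilde g$ must be chosen equal to $g$, so its Lipschitz constant is unchanged. Otherwise the later hypothesis $L_g < -\om$ used throughout the paper could silently fail for $B(t)$ even when it holds for $A(t)$. Folding the $\|y_2\|$-contribution into $\tilde g$ would increase that constant, so the boundedness of $f$ is used in an essential way to \emph{route} that contribution into $\tilde h$ instead of $\tilde g$. Everything else is a routine bookkeeping check, with no analytic obstacle beyond the triangle inequality and the splitting above.
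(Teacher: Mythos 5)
Your proof is correct and follows essentially the same route as the paper: the paper's own argument is exactly the triangle-inequality step that peels off $\la\nrm{f(t_1)-f(t_2)}$ and then absorbs the extra terms into an enlarged control function, leaving the rest as "redo the existence proof." You are in fact more careful than the paper on two points it leaves implicit — replacing $\nrm{y_2}$ by $\nrm{y_2+f(t_2)}+\nri{f}$ so that the $B(t)$-version of Assumption \ref{general-IVE2} is literally satisfied, and routing that contribution into $\tilde h$ rather than $\tilde g$ so the Lipschitz constant $L_g$ is unchanged for the later hypothesis $L_g<-\om$ — both of which are worthwhile additions.
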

\begin{proof}[Proof of Proposition \ref{B(t)+f(t)E2)}]
We prove the Proposition in the case of Assumption \ref{general-IVA0} and Assumption \ref{general-IVE2}. Let $t_1,t_2 \in \re,$ and $[x_i,y_i]\in A(t_i);$ then Assumption \ref{general-IVE2} implies
\begin{eqnarray*}
\lefteqn{(1-\la \om)\nrm{x_1-x_2}} \\
&\le& \nrm{x_1-x_2 -\la(y_1-y_2)}+\la \nrm{\hw(t_1)-\hw(t_2)}\Lw(\nrm{x_2}) \\
&&+ \la \nrm{g(t_1)-g(t_2)}\nrm{y_2} \\
&\le& \nrm{x_1-x_2 -\la(y_1+f(t_1)-y_2-f(t_2))} + \la\nrm{f(t_1)-f(t_2)} \\
&&+\la \nrm{\hw(t_1)-\hw(t_2)}\Lw(\nrm{x_2})+ \la \nrm{g(t_1)-g(t_2)}\nrm{y_2} \\
\end{eqnarray*}
Thus,
$$
\la\nrm{f(t_1)-f(t_2)} +\la \nrm{\hw(t_1)-\hw(t_2)}\Lw(\nrm{x_2})+ \la \nrm{g(t_1)-g(t_2)}\nrm{y_2}
$$
is an estimate, which allows us to redo the existence proof, where only Lipschitz continuity and boundedness was needed.
\end{proof}

\begin{proof}[Proof of Proposition \ref{half-line-inhomgeneous-stability}] A simple computation gives 
\begin{equation} \label{Resolvent-identity}
\Jl^{B(t)}(t)y=\Jl^{A(t)}(t)(\la f(t)+y).
\end{equation}
This identity leads  to
$$
u_{i,\la}(t)=\Jl^A{(t)}\fk{\la f_{i}(t)+\exp(-t/\la)x_{i} +
\frac{1}{\la}\int_0^t\exp(-\tau/\la)u_{i,\la}(t-\tau)d\tau},
$$
for $i=1,2.$ Thus,
\begin{eqnarray*}
\lefteqn{\nrm{u_{1,\la}(t)-u_{2,\la}(t)}}\\
&\le&\frac{1}{1-\la\om}\fk{\la\nrm{f_1(t)-f_2(t)}+\exp(-t/\la)\nrm{x_1-x_2}} \\
&&+\frac{1}{\la(1-\la\om)}\int_0^t\exp(-\tau/\la)\nrm{u_{1,\la}(t-\tau)-u_{2,\la}(t-\tau)}d\tau.
\end{eqnarray*}
Hence, an application of Lemma \ref{s-t-integral-inequality} leads to

\begin{eqnarray}\label{half-line-u-la-stability}
\lefteqn{\nrm{u_{1,\la}(t)-u_{2,\la}(t)}} \nonumber \\
&\le&\frac{1}{1-\la\om}\fk{\la\nrm{f_1(t)-f_2(t)}+\exp(-t/\la)\nrm{x_1-x_2}} \nonumber \\
&&+ \exp\fk{\frac{\om}{1-\la\om}t}\nrm{x_1-x_2}  
+\frac{1}{1-\la\om}\int_0^t\exp\fk{\frac{\om(t-\tau)}{1-\la\om}}\nrm{f_1(\tau)-f_2(\tau)}d\tau.
\end{eqnarray}
Now, depending on the assumptions \ref{general-IVE1}, or \ref{general-IVE2}, Corollary \ref{IVA0-IVE1-CauchyConvergence-cor}, or Lemma \ref{IVA0-IVE2-CauchyConvergence-lem} apply for the proof.
\end{proof}

\begin{cor}\label{half-line-u-la-stability-cor}
Let $I=\rep,$ and  $A(t)$ fulfill Assumption \ref{general-IVA0}. Further let $L_g<-\om,$ $L_g$ the Lipschitz constant of $g,$ and  Assumption \ref{general-IVE2} be fulfilled. For given right hand sides  $f_1, f_2\in\bucrpx,$ $x_1,x_2\in X,$ the corresponding Yosida approximants $u_{1,\la},u_{2,\la}$ on $\re$ fulfill inequality (\ref{half-line-u-la-stability}).
\end{cor}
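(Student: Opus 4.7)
The plan is to read off the conclusion directly from the first half of the proof of Proposition \ref{half-line-inhomgeneous-stability}. That proof is written in two stages: one establishes the inequality (\ref{half-line-u-la-stability}) at the level of the Yosida approximants $u_{i,\la}$, and the second passes to the limit $\la\to 0+$ by invoking Lemma \ref{IVA0-IVE2-CauchyConvergence-lem} (or Corollary \ref{IVA0-IVE1-CauchyConvergence-cor}). The present corollary asks only for the first stage, and under the hypotheses of Assumption \ref{general-IVE2} with $L_g<-\om$ and $f_1,f_2\in\bucrpx$.

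First I would verify that the approximants $u_{i,\la}$ are well-defined on $\rep$. Since $A(t)$ is $\om$-m-dissipative and $f_i$ is merely bounded and continuous, the fix-point mapping $F_\la$ built from $B_i(t):=A(t)+f_i(t)$ by the construction of Section 3 (using the resolvent identity (\ref{Resolvent-identity}), $J^{B_i(t)}_\la(t)y=J^{A(t)}_\la(t)(\la f_i(t)+y)$) is a strict contraction for large powers. This yields $u_{i,\la}\in C([0,T],X)$ for every $T>0$, and the construction does not require $f_i$ to be Lipschitz; Lipschitz continuity of $f_i$ enters only if one wants $B_i(t)$ itself to satisfy Assumption \ref{general-IVE2} in the sense of Proposition \ref{B(t)+f(t)E2)}, which is not needed here.

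Next, with the $u_{i,\la}$ at hand, I would copy the computation from the proof of Proposition \ref{half-line-inhomgeneous-stability}. Applying (\ref{Resolvent-identity}) to rewrite
\begin{equation*}
u_{i,\la}(t)=J_\la^{A(t)}(t)\Bigl(\la f_i(t)+e^{-t/\la}x_i+\dl\int_0^t e^{-\tau/\la}u_{i,\la}(t-\tau)\,d\tau\Bigr),
\end{equation*}
subtracting the two expressions, and using only the contractivity
\begin{equation*}
\nrm{J_\la^{A(t)}(t)x-J_\la^{A(t)}(t)y}\le \frac{1}{1-\la\om}\nrm{x-y},
\end{equation*}
which follows from Assumption \ref{general-IVA0}, one arrives at
\begin{equation*}
\nrm{u_{1,\la}(t)-u_{2,\la}(t)}\le \frac{1}{1-\la\om}\Bigl(\la\nrm{f_1(t)-f_2(t)}+e^{-t/\la}\nrm{x_1-x_2}\Bigr)+\frac{1}{\la(1-\la\om)}\int_0^t e^{-\tau/\la}\nrm{u_{1,\la}(t-\tau)-u_{2,\la}(t-\tau)}\,d\tau.
\end{equation*}
An application of Lemma \ref{s-t-integral-inequality} then gives inequality (\ref{half-line-u-la-stability}) verbatim.

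The only substantive point to check is that the contractivity of $J_\la^{A(t)}(t)$ used here is indeed a pure $\om$-dissipativity statement, independent of the $t$-regularity expressed in Assumption \ref{general-IVE2}; I expect this to be the main (but minor) obstacle, since at first glance it looks as if Assumption \ref{general-IVE2} and the Lipschitz hypothesis on $g$ should appear in the derivation. They do not enter the derivation of (\ref{half-line-u-la-stability}) itself; rather, they are the assumptions under which the statement of the corollary is meaningful, namely under which $u_{i,\la}$ are the same approximants produced in Section 3 and converge (by Lemma \ref{IVA0-IVE2-CauchyConvergence-lem}) to the integral solutions used in Proposition \ref{half-line-inhomgeneous-stability}. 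Hence no new estimate is needed, and the proof reduces to a single line referring back to the first part of the proof of Proposition \ref{half-line-inhomgeneous-stability}.
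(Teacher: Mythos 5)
Your proposal is correct and is essentially the paper's own argument: the paper gives no separate proof for this corollary precisely because inequality (\ref{half-line-u-la-stability}) is established at the level of the approximants $u_{i,\la}$ in the first part of the proof of Proposition \ref{half-line-inhomgeneous-stability}, using only the resolvent identity (\ref{Resolvent-identity}), the contractivity of $J_\la^{A(t)}(t)$ from Assumption \ref{general-IVA0}, and Lemma \ref{s-t-integral-inequality}. Your observation that the Lipschitz continuity of $f_i$ and the structural content of Assumption \ref{general-IVE2} do not enter this derivation (which is why the corollary can admit $f_1,f_2\in BUC(\rep,X)$) is accurate and consistent with how the corollary is used afterwards in the proof of Theorem \ref{uniform_convergence_half_line}.
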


\begin{remk}
Due to the previous stability we may consider initial values in the closure of the domain, i.e. in $\overline{D}.$
\end{remk}

\begin{proof}[Proof of Proposition \ref{inhomgeneous-stability}]
Due to (\ref{Resolvent-identity}) the fixpoint identity is given by,

$$
u_{i,\la }(t)=F(u_{i,\la})(t)= J_{ \frac{\la}{1-\la \om } }^{A(t)}(t) 
\fk{ \frac{1}{1-\la \om} \fk {\la f_i(t)+\intO \res{s}u_{ i,\la }(t-s)ds} }.
$$
Thus, using $\Jl^{A(t)}$ being a contraction,
\begin{eqnarray*}
\nrm{ u_{1,\la}(t)-u_{2,\la}(t)}&\le& \frac{\la}{1-\la \om}\nrm{f_1(t)-f_2(t)} \\
&&+\frac{\la}{1-\la \om}\intO \res{r}\nrm{u_{1,\la}(t-r)-u_{2,\la}(t-r)}dr .
\end{eqnarray*}
Applying Proposition. \ref{integral-ineq.},
\begin{eqnarray} \label{whole-line-u-la-stability}
\nrm{u_{1,\la}(t)-u_{2,\la}(t)}&\le& \frac{\la}{1-\la \om}\nrm{f_1(t)-f_2(t)} \nonumber \\
&&+\fk{\frac{1}{1-\la \om}}^2\intO\exp\fk{ \frac{\om r}{1-\la\om} }\nrm{f_1(t-r)-f_2(t-r)}dr.
\end{eqnarray}
The conclusion follows when $\la \to 0+,$ as we have the uniform convergence of the $\ul$ in both cases.
\end{proof}

\begin{proof}[Proof of Theorem \ref{uniform-whole line-convergence}]
Consider for given $f\in\bucrx$ the mollified $f_{\ep}.$ For $\la,\mu >0$ the pairs of solutions and right hand sides $(\ul,f)(\um,f),(u_{\la}^{\ep},f_{\ep}),$ and $(u_{\mu}^{\ep},f_{ep})$ of equation (\ref{line-Yosida-Approx}). Now the inequality (\ref{whole-line-u-la-stability}, and Theorem \ref{Cauchy-solutions} with $A(t):=A(t)+f_{\ep}(t)$ serve for the proof.
\end{proof}

We are now ready for the
\begin{proof}[Proof of Theorem \ref{buc_int_sol} (\ref{buc_int_sol_re}):] We give only the whole line case proof.
Let $f_{\ep}$ be the mollified $f,$ then $f_{\ep}$ is bounded and Lipschitz, and consequently,
\begin{equation*}
u^{\prime}_{\ep}(t)\in A(t)u_{\ep}(t)+\om u_{\ep}(t)+f_{\ep}(t), t\in \re,
\end{equation*}
has an integral solution $u_{\ep}.$ Redoing the steps of  the proof of Theorem \ref{Cauchy-solutions}, for given $t,r \in \re,$ with $t>r,$ we have
\begin{eqnarray*}
\lefteqn{\dl\fk{\ue(t)-\dl\int_0^{t-r}\el{\tau}\ue(t-\tau)d\tau}} \\
&&+\dl \el{t-r}\ue(r) \\
&&+ \frac{1}{\la^2} \int_{t-r}^{\infty} \el{\tau} \fk{\ue(t-\tau)-\ul(r)}d\tau - f_{\ep}(t) \\
&\in& A(t)\ue(t)+\om\ue(t) ,
\end{eqnarray*}
and 
\begin{eqnarray*}
\dl\fk{x-\dl\int_0^{t-s}\el{\tau}xd\tau}+\dl\el{t-s}x +y &\in& A(r)x+\om x.
\end{eqnarray*}
The previous choice, a multiplication by $\la,$ rearranging and the substitution $\tau=t-\tau$ leads to
\begin{eqnarray*}
\nrm{\ue(t)-x}&\le& \dl \int_r^t \el{t-\tau}\nrm{\ue(\tau)-x}d\tau +\la \el{t-r}\nrm{\ue(r)-x} \\
&&+\dl \int_{t-r}^{\infty} \el{\tau} \fk{\ue(t-\tau)-\ul(r)}d\tau + \la [y+f(t),\ue(t)-x]_{+} \\
&&+\la \om\nrm{\ue(t)-x}+ \la \nrm{\hw(t)-\hw(r)}\Lw(\nrm{x})+ \la \nrm{g(t)-g(r)}\nrm{y}.
\end{eqnarray*}
Now the proof becomes similar to the one of Theorem \ref{Cauchy-solutions}. Apply Proposition \ref{ineq-finite-integral}, and pass to $\la \to 0+.$ We obtain that $u_{\ep}$ is an integral solution in the sense of Definition 
\ref{def-int-sol-inhomo}.
The stability result Proposition \ref{inhomgeneous-stability} gives that $u_{\ep}$ is Cauchy in \bucrx.
Thus, we may pass to limits in the inequality of the integral solution on $\re.$\\
\end{proof}

\begin{proof}[Proof of Theorem \ref{buc_int_sol} (\ref{buc_int_sol_rep}):] Reapply the previous method
\end{proof}

\begin{proof}[Proof of Corollary \ref{compare-half-whole-line}] Use that $u$ is the unique integral solution with initial value $u(0).$
\end{proof}

\begin{proof}[Proof of Theorem \ref{uniform_convergence_half_line}]
 Without loss of generality $f\in \bucrx.$ Due to Corollary \ref{half-line-u-la-stability-cor} the fixpoints for $f\in \bucrx,$ and its mollified $f_{\ep}$ are uniformly close. Hence we may assume $f$ to be Lipschitz.
Rewrite for $t\in \rep$ equation (\ref{int_line-Yosida-Approx})  with the solution $\vl$ as,
\begin{eqnarray*}
\lefteqn{\frac{1}{\la}\fk{\vl(t)-\vl(0)-\frac{1}{\la}\int_0^t\exp\fk{\frac{\tau-t}{\la}}(\vl(\tau)-\vl(0))d\tau}}\\
&\in&A(t)\vl(t)+\om\vl(t)+f(t)+\frac{1}{\la^2}\int_{-\infty}^0\exp\fk{\frac{\tau-t}{\la}}(\vl(\tau)-\vl(0))d\tau,
\end{eqnarray*}
and its comparison with the approximated initial value equation (\ref{half-line-B-inhomogeneous-equation}):
\begin{eqnarray*}
\lefteqn{\frac{1}{\la}\fk{\ul(t)-u_0-\frac{1}{\la}\int_0^t\exp\fk{\frac{\tau-t}{\la}}(\ul(\tau)-u_0)d\tau}}\\
&\in&A(t)\ul(t)+\om\ul(t)+f(t),
\end{eqnarray*}
shows that the proof of \cite[Lemma 2.3]{Kreulichbd} applies using Corollary \ref{half-line-u-la-stability-cor}.
\end{proof}

\begin{proof}[Proof of Theorem \ref{solution-in-Y}]
 Using the fixpoint mapping (\ref{fixpoint-mapping}) we have that the integral solution is the limit of functions in $Y.$
 \end{proof}

\begin{proof}[Proof of Theorem \ref{solution-in-APRX}]
 To apply Theorem \ref{solution-in-Y}, it only has to be proved that $\bk{t\mapsto \Jl(t)f(t)}\in \apx$ for all $f\in \apx.$
But that is a consequence of \cite[Chapter VII,Lemma 4.1]{Dal-Krein}.
\end{proof}

\begin{proof}[Proof of Proposition \ref{solution-in-AARX}] As $AA(\re,X)\subset C_b(\re,X),$ due to Lemma \ref{cpt_open_convergence}, we find a bounded solution $u$ to (\ref{int_whole-Line-Eq}), and the approximants converge locally uniformly. The approximants satisfy the inequality (\ref{assu22_equicontinuity_ineq}),
\begin{eqnarray}
\nrm{\ul(t)-\ul(t-s)} &\le& \la \nrm{h(t-s)-h(t)} L(K_2) \nonumber \\
&&+\int_0^{\infty}\exp(\frac{\om r}{1-\la\om})\nrm{h(t-s-r)-h(t-r)}L(K_2)dr.
\end{eqnarray}
The assumption $h\in AA(\re,X)$ leads, for a given sequence in $\re,$ to a subsequence $\bk{s_k}_{k\in\za},$ such that
$$
\lim_{k\to\infty}\lim_{l\to\infty}h(t+s_k-s_l)=h(t)
$$
pointwise for every $t\in\re.$
Passing $\la\to 0+,$  we have
\begin{eqnarray}
\nrm{u(t+s_l)-u(t+s_k)} &\le& 
\int_0^{\infty}\exp(\om r)\nrm{h(t+s_l-r)-h(t+s_k-r)}L(K_2)dr.
\end{eqnarray}
Consequently,    $\bk{u(t+s_l)}_{l\in\za}$ is a Cauchy sequence for every $t\in\re.$ Thus, we can choose
$t:=t-s_l,$ which leads to
\begin{eqnarray}
\nrm{u(t)-u(t+s_k-s_l)} &\le& 
\int_0^{\infty}\exp(\om r)\nrm{h(t+s_k-s_l-r)-h(t-r)}L(K_2)dr.
\end{eqnarray}
Thus, Lebesque's Convergence Theorem applies.
\end{proof}

\section{Application to Functional-Differential-Equations}

We consider the following type of nonautonomous nonlinear Functional-Differential-Equations
\begin{equation} \label{Funtional-Equation}
u^{\prime}(t)\in A(t)u(t)+\om u(t)+G(t,u_t), t\in \re.
\end{equation} 
Here, $E:=(C([-r,0],X),\nrE{\cdot}),$ or $ E:=(BUC((-\infty,0],X),\nrE{\cdot}),$ and
$$
\Funk{G}{\re \times E}{X}{(t,\phi)}{G(t,\phi)}
$$
is such that for a constant $\beta >0,$
$$
\nrm{G(t,\phi_1)-G(t,\phi_2)}\le \beta\nrm{\phi_1-\phi_2}_E
$$
for all $\phi_1,\phi_2  \in E.$
\begin{defi}
A function $u \in \bucrx$ is called the integral solution to (\ref{Funtional-Equation}), if for $f(t):=G(t,u_t),$ $u$ is the integral solution to (\ref{whole-Line-inh-Eq}) in the sense of Definition \ref{def-int-sol-inhomo}.
\end{defi}

\begin{theo} \label{buc_functional_sol}
Let $Y \subset \bucrx$ be a closed and translation invariant subspace, $A(t)$ fulfill Assumption \ref{general-IVA0} and either Assumption \ref{general-IVE1} or Assumption \ref{general-IVE2}, with  $L_g$ the Lipschitz constant of $g,$ $L_g<-\om,$ and  
$$\bk{t\mapsto J_{\la}(t)u(t)},\bk{t\mapsto G(t,u_t)} \in Y,$$ for all $u\in Y.$ Then, if also $\beta < -\omega,$ equation (\ref{Funtional-Equation}) has an integral solution $u \in Y$  on $\re.$
\end{theo}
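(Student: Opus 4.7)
The plan is a Banach fixed-point argument on the closed subspace $Y\subset\bucrx$, viewing the functional equation as the inhomogeneous evolution equation (\ref{whole-Line-inh-Eq}) with forcing $f(t)=G(t,u_t)$. Everything needed for such inhomogeneous problems has already been built up, so the theorem should follow essentially by assembly.

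First I would define the map $\Phi:Y\to Y$ as follows. For $u\in Y$ set $f_{u}(t):=G(t,u_t)$; by the standing hypothesis $f_u=\{t\mapsto G(t,u_t)\}\in Y\subset\bucrx$. Theorem \ref{solution-in-Y} applied with this right-hand side yields an integral solution $v\in Y$ of
$$
v^{\prime}(t)\in A(t)v(t)+\om v(t)+f_{u}(t),\quad t\in\re,
$$
and Proposition \ref{inhomgeneous-stability} (applied with $f_1=f_2=f_u$) shows that this integral solution is unique, so $\Phi(u):=v$ is well defined.

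Next I would check that $\Phi$ is a strict contraction on $(Y,\nri{\cdot})$. Using the pointwise bound $\nrE{u_{1,s}-u_{2,s}}\le\nri{u_1-u_2}$, valid in both cases $E=C([-r,0],X)$ and $E=BUC((-\infty,0],X)$, together with the Lipschitz hypothesis on $G$ and Proposition \ref{inhomgeneous-stability}, I would estimate
\begin{eqnarray*}
\nrm{\Phi(u_1)(t)-\Phi(u_2)(t)}
&\le& \int_0^{\infty}\exp(\om r)\nrm{G(t-r,u_{1,t-r})-G(t-r,u_{2,t-r})}\,dr \\
&\le& \beta\nri{u_1-u_2}\int_0^{\infty}\exp(\om r)\,dr
= \frac{\beta}{-\om}\nri{u_1-u_2}.
\end{eqnarray*}
Since $\beta<-\om$, the factor $\beta/(-\om)$ is strictly less than $1$. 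Being closed in $\bucrx$, $Y$ is complete, so Banach's fixed-point theorem produces a unique $u^{\ast}\in Y$ with $\Phi(u^{\ast})=u^{\ast}$. By construction $u^{\ast}$ is the integral solution of (\ref{whole-Line-inh-Eq}) with $f(t)=G(t,u^{\ast}_t)$, which is precisely the definition of an integral solution of (\ref{Funtional-Equation}).

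The step I expect to require most care is verifying the bookkeeping behind the definition of $\Phi$: one must check that $t\mapsto u_t$ is continuous and bounded into $E$ for $u\in Y\subset\bucrx$ (using the uniform continuity of $u$ to cover both the finite- and infinite-delay cases simultaneously), and hence that $f_u\in\bucrx$ so Theorem \ref{solution-in-Y} is actually applicable. If Assumption \ref{general-IVE2} is in force, the mollification device already used in the proof of Theorem \ref{buc_int_sol} lets one extend the stability estimate of Proposition \ref{inhomgeneous-stability} from Lipschitz right-hand sides to merely $BUC$ ones, so the same contraction constant $\beta/(-\om)$ is obtained in either case. Modulo these verifications, the argument is a direct application of the contraction principle.
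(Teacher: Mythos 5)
Your proposal is correct and follows essentially the same route as the paper: both define the solution operator $T$ on $Y$ via Theorem \ref{solution-in-Y} (and Theorem \ref{buc_int_sol}) with right-hand side $f_u(t)=G(t,u_t)$, use the stability estimate of Proposition \ref{inhomgeneous-stability} to obtain the contraction constant $\beta/(-\om)<1$, and conclude with Banach's fixed-point theorem on the closed subspace $Y$. Your additional remarks on well-definedness, on $f_u\in\bucrx$, and on mollification under Assumption \ref{general-IVE2} are sensible verifications that the paper leaves implicit, but they do not change the argument.
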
 
\begin{proof}  Let $T:Y \lra Y$ denote the mapping with respect to Theorem \ref{buc_int_sol}, and Theorem \ref{solution-in-Y} which  maps, for given $v \in Y,$ $f:=\bk{t\mapsto G(t,v_t)}$ on the solution $u\in Y,$ which is an integral solution. Then an application of Proposition \ref{inhomgeneous-stability} leads to
\begin{eqnarray*}
\nrm{Tv_1(t)-Tv_2(t)}&\le& \int_0^{\infty}\exp(\om s)\nrm{G(r,(v_1)_{t-s})-G(r,(v_2)_{t-s})}ds \\
&\le& \beta\int_0^{\infty}\exp(\om s)\sup_{\nu \in [-r,0]}\nrm{v_1(t-s+\nu)+v_2(t-s+\nu)}ds \\
&\le&\frac{\beta}{-\om}\sup_{r\in \re}\nrm{v_1(r)-v_2(r)}
\end{eqnarray*}
Consequently, $T$ is a contraction on $Y$, and the fixpoint $u$ is the uniform limit of integral solutions in $Y,$ therefore itself an integral solution in $Y.$ 
\end{proof}

\begin{pro} Let $u:\re \to X$ be the integral solution of (\ref{Funtional-Equation}), and $v:I\cup\rep \lra X$ the mild solution to 
\begin{eqnarray*} 
v(t)&\in& A(t)v(t)+\om v(t)+G(t,v_t) \ \ t\ge 0 \\
v_{|I}&=&u_{|I}. 
\end{eqnarray*}

Then $u(t)=v(t)$ for all $t\ge 0.$
\end{pro}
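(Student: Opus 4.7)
The plan is to compare $u|_{\rep}$ and $v$ as two integral solutions on $\rep$ of inhomogeneous initial value problems sharing the same initial value $u(0)=v(0)$ but with (possibly different) right-hand sides $f_u(\tau):=G(\tau,u_\tau)$ and $f_v(\tau):=G(\tau,v_\tau)$, and then apply the stability estimate \ref{half-line-inhomgeneous-stability}. First I would observe that since $u$ is an integral solution of (\ref{Funtional-Equation}) on $\re$ in the sense of Definition \ref{def-int-sol-inhomo}, its restriction to $\rep$ is trivially an integral solution of (\ref{half-line-inhomogeneous-equation}) with $f=f_u$ and initial value $u(0)$. By hypothesis $v$ is an integral (mild) solution of the same problem with $f=f_v$ and the same initial value. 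Note that both $f_u$ and $f_v$ lie in $BUC(\rep,X)$ thanks to $u,v\in BUC$ and the Lipschitz property of $G$, so Proposition \ref{half-line-inhomgeneous-stability} is applicable.

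Applying that proposition with $x_1=x_2=u(0)$ yields
$$
\nrm{u(t)-v(t)}\le \int_0^t\exp(\om(t-\tau))\nrm{f_u(\tau)-f_v(\tau)}d\tau
\le \beta\int_0^t\exp(\om(t-\tau))\nrm{u_\tau-v_\tau}_E\,d\tau,
$$
using the $\beta$-Lipschitz assumption on $G$. The critical observation is then that the history functions $u_\tau,v_\tau\in E$ agree on $I$ (by $v_{|I}=u_{|I}$), so for any $\nu\in I$ with $\tau+\nu\le 0$ we have $u_\tau(\nu)=u(\tau+\nu)=v(\tau+\nu)=v_\tau(\nu)$. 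Only indices with $0<\tau+\nu\le\tau$ contribute, giving
$$
\nrm{u_\tau-v_\tau}_E\le \sup_{0\le s\le \tau}\nrm{u(s)-v(s)}.
$$

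Setting $\varphi(t):=\sup_{0\le s\le t}\nrm{u(s)-v(s)}$, one obtains the integral inequality
$$
\varphi(t)\le \beta\int_0^t\exp(\om(t-\tau))\varphi(\tau)d\tau,
$$
and Gronwall's lemma forces $\varphi\equiv 0$ on $\rep$, i.e.\ $u(t)=v(t)$ for all $t\ge 0$.

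The main obstacle, which is of the routine kind, is a careful treatment of the history norm in the two admissible cases $E=C([-r,0],X)$ and $E=BUC((-\infty,0],X)$: in the infinite-delay case one must verify that the supremum over $\nu\in(-\infty,0]$ with $\tau+\nu\le 0$ indeed vanishes, which is immediate from the hypothesis $v_{|I}=u_{|I}$ applied pointwise. Once this reduction is in place, the Gronwall step is classical and does not even require the global assumption $\beta<-\om$ used to produce the fixed point in Theorem \ref{buc_functional_sol}, since we are working on a bounded interval $[0,t]$.
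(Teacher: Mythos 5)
Your overall strategy coincides with the paper's: derive an inequality of the form $\nrm{u(t)-v(t)}\le \beta\int_0^t e^{\om(t-\tau)}\nrE{u_\tau-v_\tau}\,d\tau$, observe that the histories agree on $I$ so that the history norm reduces to $\sup_{0\le s\le\tau}\nrm{u(s)-v(s)}$, and close with Gronwall (which, as you note, does not need $\beta<-\om$). The paper does exactly this, treating the finite and infinite delay cases separately in the way you sketch.

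The difference, and the gap, is in how you obtain the first inequality. You invoke Proposition \ref{half-line-inhomgeneous-stability}, but that proposition and its proof (via the fixpoint identity and inequality (\ref{half-line-u-la-stability})) concern the solutions \emph{constructed in the paper} as uniform limits of the Yosida approximants $u_{i,\la}$. Here $v$ is only given as a \emph{mild} solution of the initial value problem; before the stability estimate can be applied to the pair $(u|_{\rep},v)$ you must identify $v$ with the constructed integral solution having right-hand side $f_v(\tau)=G(\tau,v_\tau)$ and initial value $u(0)$. That identification is a uniqueness assertion whose standard proof is precisely the comparison theorem between an integral solution and a mild solution, \cite[Theorem 6.30, p.~232]{Ito_Kappel}, which the paper invokes directly to obtain $e^{-\om t}\nrm{u(t)-v(t)}\le\int_0^t e^{-\om\nu}\nrm{G(\nu,u_\nu)-G(\nu,v_\nu)}\,d\nu$ in a single step. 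So your route silently presupposes the one non-routine ingredient rather than supplying it. A second, smaller point: in the case of Assumption \ref{general-IVE2}, Proposition \ref{half-line-inhomgeneous-stability}(2) requires the right-hand sides to be bounded \emph{and Lipschitz}, whereas you only verify $f_u,f_v\in BUC(\rep,X)$; Lipschitz continuity of $\tau\mapsto G(\tau,v_\tau)$ is not automatic for a mild solution $v$. The remaining steps --- agreement of the histories on $I$, the reduction of $\nrE{u_\tau-v_\tau}$ to a supremum over $[0,\tau]$, and the Gronwall argument --- are correct and match the paper.
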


\begin{proof} As $u$ is an integral solution with the right hand side $G(t,u_t),$ and $v$ is a mild solution, using \cite[Theorem 6.30 p.232]{Ito_Kappel}, we obtain,
\begin{eqnarray*}
\exp(-\om t)\nrm{u(t)-v(t)}&\le&\int_0^t\exp(-\om \nu)\nrm{G(s,u_{\nu})-G(s,v_{\nu})}d\nu \\
&\le&\beta\int_0^t\exp(-\om \nu)\nrE{u_{\nu}-v_{\nu}}d\nu.
\end{eqnarray*}
Due to the monotonicity of the integral, we have
\begin{eqnarray*}
\exp(-\om s)\nrm{u(s)-v(s)}&\le&\beta\int_0^t\exp(-\om \nu)\nrE{u_{\nu}-v_{\nu}}d\nu
\end{eqnarray*}
for all $0\le s \le t.$ Consequently, in the finite delay case for $t\ge r$
\begin{eqnarray*}
\exp(-\om s)\nrE{u_s-v_s}&\le&\beta\int_0^t\exp(-\om s)\nrE{u_{\nu}-v_{\nu}}d\nu,
\end{eqnarray*}
the Gronwall Lemma gives $u_t=v_t$ for all $t\ge r,$ which completes the proof.
In the infinite delay case, use that $u(t)=v(t)$ for $t\le 0,$ and therefore
$\exp(-\om t) \nrm{u(t)-v(t)} \le 0.$ Again Gronwall's Lemma serves for the proof.
\end{proof}

To discuss the object
\begin{eqnarray} \label{FDE-half-line-eq}
u^{\prime}(t)&\in& A(t)u(t) +\om u(t) +G(t,u_t)  \\
u_{|I}&=& \vp \in E \nonumber
\end{eqnarray}
for the case of finite delay, we obtain:
\begin{pro}
Under the conditions of Theorem \ref{buc_functional_sol}, in the finite delay case with $L <-\om,$ every solution to (\ref{FDE-half-line-eq}) is asymptotically close to $Y.$
\end{pro}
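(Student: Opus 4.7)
The target is the integral solution $u\in Y$ of (\ref{Funtional-Equation}) on $\re$ provided by Theorem \ref{buc_functional_sol}; the plan is to show that $\nrm{u(t)-v(t)}\to 0$ as $t\to\infty$ for any solution $v$ of (\ref{FDE-half-line-eq}), which exhibits $v$ as asymptotically close to $Y$. I would first re-interpret $u$ and $v$ as integral solutions of inhomogeneous half-line problems. By Corollary \ref{compare-half-whole-line}, $u|_{\rep}$ coincides with the integral solution of (\ref{half-line-inhomogeneous-equation}) on $\rep$ with right-hand side $f_u(t):=G(t,u_t)$ and initial value $u(0)$, while by definition $v$ is the integral solution with right-hand side $f_v(t):=G(t,v_t)$ and initial value $\vp(0)$.

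Applying Proposition \ref{half-line-inhomgeneous-stability} and the Lipschitz hypothesis on $G$ with constant $L$ yields, for all $t\ge 0$,
\begin{equation*}
\nrm{u(t)-v(t)}\le \exp(\om t)\nrm{u(0)-\vp(0)}+L\int_0^t\exp(\om(t-\tau))\nrE{u_\tau-v_\tau}\,d\tau.
\end{equation*}
Set $M(t):=\nrm{u(t)-v(t)}$. A short bootstrap (write $\nrE{u_\tau-v_\tau}\le \nri{u}+\sup_{s\in[\tau-r,\tau]}\nrm{v(s)}$, insert into the inequality and use $L/(-\om)<1$) shows that $v$, hence $M$, is bounded on $\rep$, so $\Lambda:=\limsup_{t\to\infty}M(t)<\infty$. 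For $\ep>0$ pick $T_\ep$ with $M(s)\le\Lambda+\ep$ whenever $s\ge T_\ep$, and split the integral at $T_\ep+r$. Over $[0,T_\ep+r]$ the integrand is dominated by a constant, giving a contribution of order $\exp(\om(t-T_\ep-r))\to 0$ since $\om<0$. Over $[T_\ep+r,t]$ every $\tau+\nu$ with $\nu\in[-r,0]$ lies in $[T_\ep,t]$, so $\nrE{u_\tau-v_\tau}\le \Lambda+\ep$, and the contribution is at most $\frac{L}{-\om}(\Lambda+\ep)$.

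Passing to $\limsup_{t\to\infty}$ gives $\Lambda\le\frac{L}{-\om}(\Lambda+\ep)$. Since $L/(-\om)<1$ and $\ep>0$ is arbitrary, this forces $\Lambda=0$, hence $\nrm{u(t)-v(t)}\to 0$, which is exactly the desired asymptotic closeness of $v$ to $Y$. The main technical point is the a priori boundedness of $v$ needed to ensure $\Lambda<\infty$; it is here that both the finite-delay assumption and the strict inequality $L<-\om$ are essential, since $L<-\om$ closes the Gronwall bootstrap on $\sup_{s\in[\tau-r,\tau]}\nrm{v(s)}$ while finite delay lets one avoid controlling the full history of $v$ on $(-\infty,0]$.
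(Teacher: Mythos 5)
Your proof is correct, but it takes a genuinely different route from the paper's. The paper disposes of this proposition in one line by citing Remark \ref{hat_D_constant} together with the exponential asymptotic stability result \cite[Prop. 4.3]{Ghavidel}; the surrounding discussion makes clear that the intended content is precisely $\lim_{t\to\infty}\nrm{u(t)-v(t)}=0$, imported from the FDE stability literature. You instead prove this convergence from scratch, using only the paper's own ingredients: the half-line stability estimate of Proposition \ref{half-line-inhomgeneous-stability}, the identification of $u|_{\rep}$ as the half-line integral solution with initial value $u(0)$ via Corollary \ref{compare-half-whole-line}, and a Halanay-type bootstrap --- first a Gronwall argument with the delayed supremum to obtain boundedness (this is where $L<-\om$ enters), then the split of the convolution at $T_{\ep}+r$ to close the $\limsup$ inequality $\Lambda\le\frac{L}{-\om}(\Lambda+\ep)$ and force $\Lambda=0$. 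What your approach buys is self-containedness and independence from the external reference; what it gives up is the quantitative exponential rate that Ghavidel's result supplies. Two small points you should make explicit to be fully rigorous: (i) $v$ must be understood as an integral (mild) solution with right-hand side $f_v(t)=G(t,v_t)$ of the regularity class required by Proposition \ref{half-line-inhomgeneous-stability}, so that the stability estimate really applies to the pair $(f_u,f_v)$; and (ii) for $\tau<r$ the history norm $\nrE{u_\tau-v_\tau}$ involves $\vp$ on $[\tau-r,0]$, which is bounded since $\vp\in E$, so the constant in your bootstrap must include $\sup_{s\in[-r,0]}\nrm{u(s)-\vp(s)}$.
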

\begin{proof} Apply Remark \ref{hat_D_constant} and  \cite[Prop. 4.3]{Ghavidel}. 
\end{proof}

Consequently, we are able to compare the integral solution on $\re$ with the mild solution in the case of exponential asymptotic stability for the FDE. Moreover, only 
$$
\lim_{t\to \infty}\nrm{u(t)-v(t)} =0,
$$
where $u$ denotes the solution on $\re,$ is needed. For exponential stability, we cite the following results of \cite[Th. 4.3. Cor. 4.2, and Prop. 4.3]{Ghavidel} and \cite{Ruess_Summers_stability}.

\section{Some Integrals}

The following gives a list of the properties needed:
\begin{enumerate}
  \item $\displaystyle I_0(2\sqrt{\alpha x y}) =
         \sum_{k=0}^{\infty} \frac{\alpha^kx^ky^k}{(k!)^2} =
           \frac{2}{\pi}\int_0^1\frac{1}{\sqrt{1-t^2}}
           \cosh(2t\sqrt{\alpha xy})dt$
  \item $I_0(0)=1,$ $\partial_x I_0(2\sqrt{\alpha x })_{|x=0}=0$
  
  \item $ \displaystyle
         %  \partial_1J(x,y) =
           \partial_x I_0(2\sqrt{\alpha xy})
            = \sqrt{\frac{\alpha y}{x}} I_1(2\sqrt{\alpha xy}) $
  \item$ \partial_x\partial_y  I_0(2\sqrt{\alpha x y}) =
          \alpha I_0(2\sqrt{\alpha x y}) $
 
\end{enumerate}
\begin{lem} \label{SomeIntegrals}
 Let $I_0$ be the modified Besselfunction, $\om,\la,\mu >0,$ and $\Lambda=\la+\mu+\la\mu\om.$ Then:
\begin{enumerate}
\item $$ \lim_{R\to\infty}\frac{1}{R^2}\int_0^R\int_0^R I_0(2\sqrt{xy})\exp(-x-y)\btr{x-y}dydx =0. $$ \label{bd_lipschitz_bessel_convergence}
\item $$ \int_0^{\infty}\int_0^{\infty} I_0(2\sqrt{xy})\exp(-(1+\la\om)x-(1+\mu\om)y)dydx=\frac{1}{\om\Lambda}.$$ 
\label{bessel_laplace_trans}
\item $$ \lim_{\la,\mu \to 0} \la\mu \int_{0}^{\infty} \int_{0}^{\infty} \btr{x-y}I_0(2\sqrt{xy})\exp(-x-y)\exp(-\om(\la x+\mu y)dydx=0.$$
\label{infinite_lipschitz_bessel_convergence}
\item $$\frac{1}{R}\int_0^R \int_0^R I_0(2\sqrt{xy})\exp(-x-y)dydx \le 1.$$
\label{bd_bessel}
\item 
$$ 
\lim_{R\to\infty}\frac{1}{R^2}\int_0^R \int_0^R I_0(2\sqrt{xy})\exp(-x-y)dydx =0 .
$$
\label{bd_bessel_convergence}
\item  $$ \lim_{R\to\infty}\frac{1}{R^2}\int_0^R \int_0^R \partial_yI_0(2\sqrt{xy})\exp(-x-y)\btr{x-y}dydx =0 .$$
\label{bd_bessel_1_convergence}
\item 
$$
\lim_{t\to\infty}\frac{\la\mu}{\Lambda^3}\int_{\rep^2\backslash [0,t]\times [0,t]} \IO{xy}\exp\fk{ \llmw x + \mlmw y}dydx=0
$$
uniformly in $\la,\mu >0.$
\label{bessel_uniform_integrability}
\item Let  $\bk{g_n}_{n\in\za}\subset C_b(\re\times\re,X),$ uniformly bounded, such that $g_n\lra 0$ unifomly on compact sets. Then
$$
\lim_{n\to\infty}\frac{\la\mu}{\Lambda^3}\int_0^{\infty}\int_0^{\infty} \IO{xy}\exp\fk{ \llmw x + \mlmw y}\nrm{g_n(x,y)}dydx=0
$$
uniformly in $\la,\mu >0.$ \label{bessel_cpt_open_convergence}
\end{enumerate}
\end{lem}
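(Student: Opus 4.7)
The plan is to reduce all eight assertions to three master techniques. The first is the series identity
\[
I_0(2\sqrt{xy})e^{-x-y}=\sum_{k=0}^\infty p_k(x)p_k(y),\qquad p_k(t):=\frac{t^ke^{-t}}{k!},
\]
which identifies the kernel as the Poisson coincidence probability $P(X=Y)$ for independent Poisson variables with parameters $x$ and $y$. The second is the Laplace transform
\[
\int_0^\infty I_0(2\sqrt{xy})e^{-\al y}\,dy=\frac{1}{\al}\,e^{x/\al}\qquad(\al>0),
\]
obtained by termwise integration of the series. The third is the concentration of the kernel on the diagonal $x=y$, which follows from the representation $I_0(2t)=\pi^{-1}\int_0^\pi e^{2t\cos\theta}\,d\theta$, rewriting $I_0(2\sqrt{xy})e^{-x-y}$ as the average over $\theta$ of $\exp(-(\sqrt x-\sqrt y)^2-2\sqrt{xy}(1-\cos\theta))$.

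Items (2), (4), (5) then follow directly. For (2), apply the Laplace formula in $y$ to get $(1+\mu\om)^{-1}e^{x/(1+\mu\om)}$, then integrate in $x$ using $(1+\la\om)-(1+\mu\om)^{-1}=\om\Lambda/(1+\mu\om)$ to reach $1/(\om\Lambda)$. For (4), set $F(R):=\int_0^R\int_0^R I_0(2\sqrt{xy})e^{-x-y}\,dy\,dx$; the series identity gives $F(R)=\sum_k q_k(R)^2$ with $q_k(R):=\int_0^R p_k(x)\,dx\in[0,1]$, and the elementary computation $\sum_k q_k(R)=\int_0^R e^{-x}e^x\,dx=R$ yields $F(R)\le\sum_k q_k(R)=R$. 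Item (5) is immediate upon dividing by $R$.

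For (1), the crude bound $|x-y|\le R$ gives only $O(R\cdot F(R))=O(R^2)$, which is useless. I would apply Cauchy--Schwarz to reduce matters to establishing $G(R):=\int_0^R\int_0^R I_0(2\sqrt{xy})e^{-x-y}(x-y)^2\,dy\,dx=O(R^2)$, which then yields $\sqrt{F(R)G(R)}=O(R^{3/2})$, hence the ratio over $R^2$ tends to zero. The estimate on $G(R)$ comes from expanding $(x-y)^2=x^2-2xy+y^2$ via the series and using the Poisson-moment identities $\sum_k q_k(R)=R$, $\sum_k k\,q_k(R)=R^2/2$, together with the telescoping relation $q_k(R)-q_{k+1}(R)=p_{k+1}(R)$. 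Alternatively the polar substitution $x=r^2\cos^2\al$, $y=r^2\sin^2\al$ combined with the angular representation of $I_0$ displays the integrand as a Gaussian of width $O(1/\sqrt R)$ along $\cos\al=\sin\al$, making the $O(R^{3/2})$ rate transparent. Item (6) is handled identically via $\partial_y I_0(2\sqrt{xy})=\sqrt{x/y}\,I_1(2\sqrt{xy})$ or its series $\sum_{k\ge 0}x^{k+1}y^k/(k!(k+1)!)$ combined with the same concentration argument.

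Items (3), (7), (8) are scaling consequences. For (3), the full infinite integral $\int\int I_0(2\sqrt{xy})e^{-ax-by}\,dy\,dx=1/(ab-1)$ and its second moments (obtained by differentiating in $a,b$) allow a direct Cauchy--Schwarz bound with $a=1+\la\om$, $b=1+\mu\om$, yielding a decay of order $\la\mu/(\la+\mu)^{3/2}\to 0$. For (7), the rescaling $(x,y)\mapsto\Lambda(x,y)$ converts the integrand into the normalised form treated in (2), so the full integral is $O(1)$ uniformly in $\la,\mu$; the tail over $\rep^2\setminus[0,t/\Lambda]^2$ then vanishes by dominated convergence, with uniformity in $\la,\mu$ following from the explicit exponential weights and the fact that $t/\Lambda\to\infty$. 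Item (8) follows by a standard $\ep/3$ split: on a compactum where $g_n\to 0$ uniformly the contribution is small, and outside the compactum (7) handles the tail uniformly in $\la,\mu$. The main obstacle is item (1), where extracting the sharp $O(R^{3/2})$ rate requires either the saddle-point style polar analysis or a delicate manipulation of the Poisson-moment series, since the boundedness coming from (4) alone just barely misses the required decay.
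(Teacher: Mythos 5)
Your proposal is correct, and for the hard items it takes a genuinely different route from the paper. The paper proves (1) by treating the double integral with the Bernoulli--l'H\^opital rule, splitting $[0,R]$ at $R/2$, invoking the asymptotic expansion $I_0(x)\sim e^x/\sqrt{2\pi x}$ from Olver, and reducing to Gaussian integrals via $u=\sqrt{R}-\sqrt{y}$; item (3) is handled by an even longer decomposition into three regions with the same asymptotics, and (6) by partial integration back to (1) and (5). You instead exploit the identity $I_0(2\sqrt{xy})e^{-x-y}=\sum_k p_k(x)p_k(y)$ with $p_k(t)=t^ke^{-t}/k!$, which makes (4) a one-line estimate ($\sum_k q_k(R)^2\le\sum_k q_k(R)=R$), and then dispose of (1), (3) and (6) by Cauchy--Schwarz against the second moment $\int\!\!\int K(x-y)^2$; I checked that the Poisson-moment bookkeeping you sketch does give $G(R)\le 3R^2+2R$, hence the $O(R^{3/2})$ numerator and the $O(R^{-1/2})$ rate, and that for (3) the derivatives of $(ab-1)^{-1}$ give $\int\!\!\int(x-y)^2K=2(a-b)^2(ab-1)^{-3}+2(ab-1)^{-2}$, so that $\la\mu\cdot O((\la+\mu)^{-3/2})\to 0$ as claimed. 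What your approach buys is a completely elementary, quantitative argument with explicit rates and no case analysis, no asymptotics of $I_0$, and no l'H\^opital for double integrals; what the paper's approach buys is the sharper picture of Gaussian concentration of width $O(1/\sqrt{R})$ along the diagonal, which your polar-coordinate aside also recovers. Items (2), (5), (7), (8) you treat essentially as the paper does (Laplace transform, explicit tail, $\ep$-splitting). One small point: in (7), dominated convergence alone would not give uniformity in $\la,\mu$; you need the closed form of the tail integral that the Laplace identity provides (the paper computes it as $\frac{\la\mu}{\Lambda^2\om}\exp(-\om t/(1+\la\om))$), so make the ``explicit exponential weights'' remark the actual argument rather than an afterthought.
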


\begin{proof} Using the identity given in the NIST Handbook, http://dlmf.nist.gov p. 259(10.43.23), we have

\begin{equation}
\int_0^{\infty} t^{\nu+1} I_{\nu}(bt) \exp(-p^2t^2)dt= 
\frac{b^{\nu}}{(2p^2)^{\nu+1}}\exp\fk{\frac{b^2}{4p^2}}.
\end{equation}

We obtain for $\nu=0,x=t^2, 2tdt=dx$

\begin{equation}
\int_0^{\infty}I_0(b\sqrt{x})\exp(-p^2x)dx=\frac{1}{p^2}\exp\fk{\frac{b^2}{4p^2}}.
\end{equation}

Choosing $b:=2\sqrt{y},$ the previous equation gives

\begin{equation} \label{main_bessel_equation}
\int_0^{\infty}I_0(2\sqrt{xy})\exp(-p^2x)dx=\frac{1}{p^2}\exp\fk{\frac{y}{p^2}}.
\end{equation}

Therefore, (\ref{bd_bessel}) and (\ref{bd_bessel_convergence}) become a simple computation, i.e.,
$$
\lim_{R\to\infty}\frac{1}{R}\int_0^R\int_0^{\infty}I_0(2\sqrt{xy})\exp(-x-y)dxdy=1.
$$
Consequently,
$$
\lim_{R\to\infty}\frac{1}{R^2}\int_0^R\int_0^{\infty}I_0(2\sqrt{xy})\exp(-x-y)dxdy=0.
$$

{\bf Proof of (\ref{bd_lipschitz_bessel_convergence}):}
Note that the integrand is positive, and therefore the 
double-integral is increasing in $R>0$, and therefore either infinite or finite. As the finite case is obvious, assume the double-integral to be infinite. Hence we are in the case of the Bernoulli-l-H\^opital Rule. Applying

$$
\partial_t\int_a^t\int_a^tf(x,y)dxdy=\int_a^tf(t,y)dy+\int_a^tf(x,t)dx,
$$
and the symmetry of the integrand, we only have to prove
\begin{equation}
\lim_{R\to\infty}\frac{1}{R}\int_0^R (R-y)I_0(2\sqrt{Ry})\exp(-R-y)dy =0.
\end{equation}
We split the integration path $[0,R]=[0,R/2]\cup [R/2,R].$ 
Applying $I_0(x)\le \exp(x),$ for $x \ge 0,$  and $(R-y)\le R$, we obtain
\begin{eqnarray*}
\lefteqn{\frac{1}{R}\int_0^{R/2}(R-y)I_0(2\sqrt{Ry})\exp(-R-y)dy} \\
&\le& \frac{1}{R/2}\int_0^{R/2}R\exp\fk{-R\fk{1-\frac{1}{\sqrt{2}}}^2}dy,\\
&& \to 0
\end{eqnarray*}
as $R\to \infty.$ 

Thus, we can consider the asymptotic behavior of the integrand. Note that
\cite[p. 251,(8.08)]{Olver} gives
$$I_0(x) \thickapprox \frac{e^x}{\sqrt{2\pi x}}\fk{1+O\fk{\frac{1}{x}}}. $$
This together with
$(R-2\sqrt{Ry}+y)=\fk{\sqrt{R}-\sqrt{y}}^2,$
yields

\begin{eqnarray*}
\lefteqn{\frac{1}{R}\int_{R/2}^R(R-y)\frac{\exp\fk{-\fk{\sqrt{R}-\sqrt{y}}^2}}{\sqrt[4]{R}\sqrt[4]{y}}dy} \\
&\le&\frac{1}{R\sqrt{R}}\int_{R/2}^R (R-y)\exp\fk{-\fk{\sqrt{R}-\sqrt{y}}^2}dy \\
&=&\frac{1}{R\sqrt{R}}\int_{R/2}^R (\sqrt{R}-\sqrt{y})(\sqrt{R}+\sqrt{y})\exp\fk{-\fk{\sqrt{R}-\sqrt{y}}^2}dy. \\
\end{eqnarray*}
The subsitution $u=\sqrt{R}-\sqrt{y}$ transforms the integral into
\begin{eqnarray*}
\frac{1}{R\sqrt{R}} \int_0^{\sqrt{R}-\sqrt{R/2}}(2Ru-4\sqrt{R}u^2-u^3)\exp(-u^2)du &\to 0
\end{eqnarray*}
as $R\to \infty$. This finishes the proof of (\ref{bd_lipschitz_bessel_convergence}).\\

{\bf Proof of(\ref{infinite_lipschitz_bessel_convergence}):}
We define $R:=\frac{1}{\la+\mu}$ and split the region of integration to
\begin{eqnarray*}
\lefteqn{\bk{(x,y):0\le x\le R,0\le y\le R}}\\
&\cup & \bk{(x,y):0\le x\le R, R\le y <\infty}\\ 
&\cup & \bk{(x,y):R\le x<\infty, 0\le y<\infty}.
\end{eqnarray*}
Starting with the first region, we have
\begin{eqnarray*}
\lefteqn{\la\mu \int_{0}^{R} \int_{0}^{R} \btr{x-y}I_0(2\sqrt{xy})\exp(-x-y)\exp(-\om(\la x+\mu y)dydx} \\
&\le &\frac{\la\mu}{(\la+\mu)^2}\frac{1}{R^2}\int_{0}^{R} \int_{0}^{R} \btr{x-y}I_0(2\sqrt{xy})\exp(-x-y)dydx.
\end{eqnarray*}
Applying the previously proved result of Lemma \ref{SomeIntegrals} (\ref{bd_lipschitz_bessel_convergence}), it is sufficient to consider the regions
\begin{equation} \label{first-region-split}
\bk{(x,y):0\le x\le R, R\le y <\infty}\cup\bk{(x,y):R\le x <\infty, x \le y <\infty}
\end{equation}
 and 
$$
\bk{(x,y):0\le y\le R, R\le x <\infty}\cup\bk{(x,y):R\le y < \infty, y \le x <\infty}.
$$
 Due to the symmetry of the integrand and the domains it remains to show the claim for one of the regions. We choose (\ref{first-region-split}). We start with
$$
\bk{(x,y):R\le x <\infty, x \le y <\infty}.
$$
 
As we can use the asymptotics of the Besselfunction, $\la x\exp(-\la x)$ bounded for $x\ge 0,$ and the substitution $u=\sqrt{y}-\sqrt{x},$ we obtain the integral
\begin{eqnarray*}
\lefteqn{\la\mu\int_R^{\infty}\int_{x}^{\infty}\frac{(y-x)\exp\fk{-\fk{\sqrt{y}-\sqrt{x}}^2}\exp(-\om(\la x+\mu y))}{\sqrt[4]{x}\sqrt[4]{y}}dydx} \\
&\thickapprox&\int_R^{\infty}\int_{x}^{\infty}
\frac{(y-x)\exp\fk{-\fk{\sqrt{y}-\sqrt{x}}^2}}{x\sqrt[4]{x}y\sqrt[4]{y}}dydx \\
&\le&\int_R^{\infty}\int_{0}^{\infty}\frac{u(u+2\sqrt{x})(u+\sqrt{x})\exp(-u^2)}{(u+\sqrt{x})^2x\sqrt{u+\sqrt{x}}\sqrt[4]{x}} dudx.
\end{eqnarray*}
As $u+\sqrt{x} \ge \sqrt{x},$  we have $\sqrt{u+\sqrt{x}} \ge \sqrt[4]{x}.$ We end up with the integral
$$
\int_R^{\infty}\int_{0}^{\infty}\frac{(2ux+4\sqrt{x}u^2+u^3)\exp(-u^2)}{x^2\sqrt{x}}dudx \lra 0
 $$
 for $R\to \infty.$
It remains to consider
\begin{eqnarray*}
\lefteqn{\bk{(x,y):0\le x\le R, R\le y <\infty}} \\
&=& \bk{(x,y):0\le x\le \frac{R}{2}, R\le y <\infty}\cup \bk{(x,y):\frac{R}{2}\le x\le R, R\le y <\infty}\\
&=:& \Omega_1\cup \Omega_2.
\end{eqnarray*}
Using the previous observations, the discussion of $\Omega_2$ becomes straightforward. Again we can use the asymptotics of the Besselfunction, and the substitution $u=\sqrt{y}-\sqrt{x}.$
This leads to the integral
\begin{eqnarray*}
\lefteqn{\int_{R/2}^{R}\int_{\sqrt{R}-\sqrt{x}}^{\infty}\frac{(2ux+4\sqrt{x}u^2+u^3)\exp(-u^2)}{x^2\sqrt{x}}dudx}\\
&\le&\int_{R/2}^{R}\int_{0}^{\infty}\frac{(2ux+4\sqrt{x}u^2+u^3)\exp(-u^2)}{x^2\sqrt{x}}dudx \lra 0
\end{eqnarray*}
for $R\to \infty.$
 
Therefore it remains to consider $\Omega_1.$ Using the inequality $I_0(x)\le \exp(x),$ and finally the subtitution $u=\sqrt{y}-\sqrt{x},$ the integral tranforms into
$$
\la\mu\int_0^{R/2}\int_{\sqrt{R}-\sqrt{x}}^{\infty}u(u+\sqrt{x})(u+2\sqrt{x})\exp(-u^2)\exp(-\la\om x- \mu\om(u+\sqrt{x})^2)dudx.
$$
As $\mu\om(u+\sqrt{x})^2\exp(-\mu\om(u+\sqrt{x})^2)$ is bounded, for some $K>0,$ we have,
\begin{eqnarray*}
\lefteqn{\la\mu\int_0^{R/2}\int_{\sqrt{R}-\sqrt{x}}^{\infty}u(u+\sqrt{x})(u+2\sqrt{x})\exp(-u^2)\exp(-\la\om x- \mu\om(u+\sqrt{x})^2)dudx} \\
&\le&2 K\la\int_0^{R/2}\int_{\sqrt{R}-\sqrt{x}}^{\infty} u\exp(-u^2)\exp(-\la\om x)dudx \\
&\le& K \frac{1}{\om}\exp(-R(1-\frac{1}{\sqrt2})^2) \lra 0
\end{eqnarray*}
for $R\to \infty.$\\
{\bf Proof of (\ref{bd_bessel_1_convergence}):} By virtue of the partial integration formula for absolutely continuous functions, the proof becomes straightforward: 

\begin{eqnarray*}
\lefteqn{\frac{1}{R^2}\int_0^R \int_0^R \partial_y\left[I_0(2\sqrt{xy})\right]\exp(-x-y)\btr{x-y}dxdy}\\
&=&\frac{1}{R^2}\int_0^RI_0(2\sqrt{xy})\exp(-x-y)\btr{x-y}|_{y=0}^{y=R} dx \\
&&+\frac{1}{R^2}\int_0^R \int_0^R I_0(2\sqrt{xy})\partial_y[\exp(-x-y)\btr{x-y}]dxdy, \\
&\le&\frac{1}{R^2}\fk{\int_0^R I_0(2\sqrt{Rx})\exp(-R-x)Rdx - \int_0^RI_0(0)\exp(-x)xdx} \\
&&+\frac{1}{R^2}\int_0^R \int_0^R I_0(2\sqrt{xy})\exp(-x-y)\btr{x-y}dxdy \\
&&+\frac{1}{R^2}\int_0^R \int_0^R I_0(2\sqrt{xy})\exp(-x-y)\btr{\mbox{sign}(x-y)}dxdy. \\
\end{eqnarray*}
Thus, the previous integral results, and the equation (\ref{main_bessel_equation}) will apply for the proof.

{\bf Proof of (\ref{bessel_uniform_integrability}):} The forthcoming equations use the substitution $x:=x/\Lambda,y=y/\Lambda,$ and (\ref{main_bessel_equation}), with $p^2=1+\la\om.$

\begin{eqnarray*}
\lefteqn{\frac{\la\mu}{\Lambda^3} \int_t^{\infty}\int_0^{\infty}I_0\fk{2\frac{\sqrt{xy}}{\Lambda}}\exp\fk{-\frac{1+\la\om}{\Lambda}x-\frac{1+\mu\om}{\Lambda}y}dxdy}\\
&=&\frac{\la\mu}{\Lambda} \int_{t/\Lambda}^{\infty}\int_0^{\infty}I_0\fk{2\sqrt{xy}}\exp\fk{-(1+\la\om)x-(1+\mu\om)y}dxdy, \\
&=&\frac{\la\mu}{\Lambda}\int_{t/\Lambda}^{\infty}\exp(-(1+\mu\om)y)\frac{\exp\fk{\frac{y}{1+\la\om}}}{1+\la\om} dy \\
&=&\frac{\la\mu}{\Lambda}\int_{t/\Lambda}^{\infty}\exp\fk{-\frac{\om\Lambda}{1+\la\om}y}dy \\
&=&\frac{\la\mu}{\Lambda^2\om}\exp\fk{-\frac{\om t}{1+\la\om}} \\
&&\lra 0, \mbox{ when $t\to \infty,$ uniformly in $\la,\mu>0.$}
\end{eqnarray*}

{\bf Proof of (\ref{bessel_cpt_open_convergence}):}
Applying \cite[Lemma 4.2 (1)]{Kreulichbd} we have
$$
\frac{\la\mu}{\Lambda^3}\int_0^{\infty}\int_0^{\infty} \IO{xy}\exp\fk{ \llmw x + \mlmw y}dydx \le \frac{1}{\om},
$$
and due to the fact that every single term of the resolvent is positive, for the region $(0,t)\times(0,t),$ we obtain,
\begin{eqnarray*}
\frac{\la\mu}{\Lambda^3}\int_0^{t}\int_0^{t}\IO{xy}\exp\fk{ \llmw x + \mlmw y}\nrm{g_n(x,y)}dydx
&\le & \frac{1}{\om}\sup_{0\le x,y\le t}\nrm{g_n(x,y)}.
\end{eqnarray*}
Consequently, the integral tends to zero when $n\to \infty$ uniformly in $\la,\mu >0.$
\end{proof}

\begin{pro} \label{interchange-integrals}
Let $f,g\in L^1([0,T]^2),$ be positive. Then
\begin{eqnarray*}
\lefteqn{\int_0^T\int_0^t\int_0^tf(x,y)g(t-x,t-y)dxdydt} \\
&\le&\int_0^T\int_x^T\int_{\max(x,y)-x}^{T-x}f(x,y)g(u,u+x-y)dudydx.
\end{eqnarray*}
\end{pro}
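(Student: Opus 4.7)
The plan is to use Fubini on the left-hand side and then a linear substitution to recast the triple integral into the coordinates used on the right-hand side. Since $f,g\ge 0$, Fubini applies freely, and I would first pull $t$ to the innermost position: the region of integration $\{(t,x,y):0\le x,y\le t\le T\}$ rewrites as $\{(x,y)\in[0,T]^2,\ \max(x,y)\le t\le T\}$, giving
\[
\text{LHS}=\int_0^T\!\int_0^T\!\int_{\max(x,y)}^{T}f(x,y)\,g(t-x,t-y)\,dt\,dy\,dx.
\]

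Next I would substitute $u=t-x$ in the inner integral for each fixed $x$; the Jacobian is one, $t-y$ becomes $u+x-y$, and the new $u$-limits are $\max(x,y)-x=\max(0,y-x)$ and $T-x$. This gives
\[
\text{LHS}=\int_0^T\!\int_0^T\!\int_{\max(0,y-x)}^{T-x}f(x,y)\,g(u,u+x-y)\,du\,dy\,dx.
\]
On the sub-region $y\ge x$ we have $\max(0,y-x)=y-x=\max(x,y)-x$, and the $y$-range $[x,T]$ together with the $u$-range reproduce exactly the integral written on the right-hand side of the proposition. On $\{y<x\}$ the lower $u$-limit collapses to $0$.

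The last step — passing from this identification to the stated inequality — is the main obstacle. A direct restriction of $y$ to $[x,T]$ would give the transformed LHS bounded below (not above) by the RHS, since the complementary $\{y<x\}$ piece is nonnegative. To obtain the asserted direction, I would attempt to fold the $\{y<x\}$ contribution onto the $\{y\ge x\}$ piece via the coordinate swap $(x,y)\leftrightarrow(y,x)$, which under the substitution rule sends $g(u,u+x-y)$ to $g(u,u+y-x)$, and then bundle the resulting pair of integrals into the single $\int_0^T\!\int_x^T$-integral on the right by a monotonicity argument on the $u$-domain (using that the $\{y<x\}$-piece uses a shorter $u$-range $[0,T-x]$ while the $\{y\ge x\}$-piece uses $[y-x,T-x]$, both of length $T-\max(x,y)$ after reparametrization). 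Making this folding precise without presuming symmetry of $f$ or $g$ is the delicate point; the computation otherwise reduces to bookkeeping of the $(x,y,u)$-domain and an appeal to the positivity hypothesis.
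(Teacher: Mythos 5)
Your Fubini-plus-substitution computation is exactly the paper's route: the paper likewise rewrites the region with indicator functions, reorders the integrations to $dy\,dt\,dx$, and substitutes $u=t-x$ in the innermost integral. Up to that point both arguments yield the identity
\begin{equation*}
\int_0^T\int_0^t\int_0^t f(x,y)\,g(t-x,t-y)\,dx\,dy\,dt
=\int_0^T\int_0^T\int_{\max(x,y)-x}^{T-x} f(x,y)\,g(u,u+x-y)\,du\,dy\,dx,
\end{equation*}
with the middle integral taken over all of $[0,T]$.

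The ``delicate point'' you flag at the end is not a removable difficulty in your write-up; it is a genuine defect of the statement. Restricting the middle integration from $[0,T]$ to $[x,T]$ discards the nonnegative $\{y<x\}$ contribution and can only decrease the value, so the asserted ``$\le$'' points the wrong way, as you correctly observe. Your proposed folding cannot repair this: relabelling $(x,y)\mapsto(y,x)$ on the $\{y<x\}$ piece produces $f(y,x)$ and $g(u,u+y-x)$, which do not match the integrand on the right-hand side without symmetry of $f$ and $g$; and even in the symmetric case the inequality fails --- for $f\equiv g\equiv 1$ the left-hand side equals $T^3/3$ while the right-hand side equals $\int_0^T\int_x^T(T-y)\,dy\,dx=T^3/6$. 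The paper's own proof commits precisely the step you declined to make: it replaces $\int_0^T(\cdots)\,dy$ by $\int_x^T(\cdots)\,dy$ and writes ``$\le$'' where ``$\ge$'' is what actually holds. The proposition should be read as the equality displayed above (equivalently, with $\int_0^T dy$ in place of $\int_x^T dy$ on the right); in the one place it is invoked, the proof of Lemma~\ref{IVA0-IVE2-CauchyConvergence-lem}, the subsequent estimate integrates $y$ over all of $[0,T]$ anyway, so the corrected version is what is really needed and used there.
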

\begin{proof}
\begin{eqnarray*}
\lefteqn{\int_0^T\int_0^t\int_0^tf(x,y)g(t-x,t-y)dxdydt} \\
&=& \int_0^T\int_0^T\int_0^T\chi_{[0,t]}(x)\chi_{[0,t]}(y)f(x,y)g(t-x,t-y)dydxdt \\
&=&\int_0^T\int_0^T\int_0^T\chi_{[0,t]}(x)\chi_{[0,t]}(y)f(x,y)g(t-x,t-y)dydtdx \\
&\le&\int_0^T\int_x^T\int_0^T\chi_{[0,t]}(y)f(x,y)g(t-x,t-y)dydtdx \\
&\le&\int_0^T\int_x^T\int_{\max(x,y)}^T f(x,y)g(t-x,t-y)dtdydx
\end{eqnarray*}
Substitute in the inner integral $u:=t-x,$ i.e. $x=t-u,$ the integral transforms to
$$\int_0^T\int_x^T\int_{\max(x,y)-x}^{T-x}f(x,y)g(u,u+x-y)dudydx.$$
\end{proof}

\section{Appendix}

\begin{lem} 
Let $\bk{f_n}_{n\in \za}\subset C[0,T]$ equicontinuous and convergent in $L^1[0,T]$ to some $f$, then $f_n\to f $ uniformly on $[0,T].$
\end{lem}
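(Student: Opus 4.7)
The plan is to use equicontinuity to upgrade the $L^1$-smallness of $f_n-f_m$ to pointwise smallness, i.e.\ to show that $\{f_n\}$ is uniformly Cauchy in $C[0,T]$. Once this is established, uniform completeness of $C[0,T]$ yields a continuous limit $g$, and since uniform convergence implies $L^1$-convergence on $[0,T]$ we conclude $g=f$ a.e., so $f_n\to f$ uniformly (after identifying $f$ with its continuous representative).

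The key estimate is as follows. Fix $\varepsilon>0$. By equicontinuity pick $\delta\in(0,T]$ such that $|t-s|<\delta$ implies $|f_n(t)-f_n(s)|<\varepsilon$ for every $n\in\mathbb{N}$. For any $t\in[0,T]$ let $I_t\subset [0,T]$ be a subinterval containing $t$ of length $\ell(t)\ge \delta/2$ and diameter less than $\delta$. For $s\in I_t$ the triangle inequality gives
\[
|f_n(t)-f_m(t)|\le |f_n(t)-f_n(s)|+|f_n(s)-f_m(s)|+|f_m(s)-f_m(t)|<2\varepsilon+|f_n(s)-f_m(s)|.
\]
Averaging $s$ over $I_t$ one obtains
\[
|f_n(t)-f_m(t)|\le 2\varepsilon+\frac{2}{\delta}\int_{I_t}|f_n(s)-f_m(s)|\,ds\le 2\varepsilon+\frac{2}{\delta}\|f_n-f_m\|_{L^1[0,T]}.
\]
Since $\{f_n\}$ is $L^1$-Cauchy there exists $N$ with $\|f_n-f_m\|_{L^1}<\varepsilon\delta/2$ for $n,m\ge N$, yielding $\sup_{t\in[0,T]}|f_n(t)-f_m(t)|\le 3\varepsilon$.

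The only point requiring a little care, and the main conceptual obstacle, is that the $L^1$-limit $f$ is a priori merely an equivalence class and need not be continuous. This is harmless: the uniform Cauchy property just proved gives a continuous function $g\in C[0,T]$ with $f_n\to g$ uniformly, hence in $L^1$; by uniqueness of $L^1$-limits $g=f$ almost everywhere, and replacing $f$ by its continuous representative $g$ finishes the proof. Thus the essential content of the lemma is the bound above, which simply trades one factor of $\delta^{-1}$ from averaging against the $L^1$-smallness, made possible by equicontinuity.
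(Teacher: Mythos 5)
Your proof is correct, but it takes a genuinely different route from the paper's. The paper argues by subsequences: from any subsequence it extracts (via $L^1$-convergence) a further subsequence converging pointwise a.e., uses the density of the convergence set together with equicontinuity to get pointwise convergence everywhere, and then concludes uniform convergence of the full sequence (implicitly invoking the fact that equicontinuity plus pointwise convergence on a compact set yields uniform convergence, together with the subsequence principle). You instead prove directly that $\{f_n\}$ is uniformly Cauchy by averaging $|f_n-f_m|$ over a short interval $I_t$ around each point and trading a factor $2/\delta$ against the $L^1$-smallness. Your argument is more elementary and quantitative --- it avoids the a.e.-convergence theorem and the subsequence principle entirely, and it yields an explicit bound $\|f_n-f_m\|_\infty \le 2\varepsilon + \tfrac{2}{\delta}\|f_n-f_m\|_{L^1}$ relating the uniform modulus to the equicontinuity modulus and the $L^1$ rate. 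The paper's version is shorter to state but leans on heavier standard machinery. You are also right to flag, and correctly resolve, the point that the $L^1$-limit $f$ is a priori only an equivalence class; the paper passes over this silently by identifying $f$ with its continuous representative.
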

\begin{proof} Given an arbitrary subsequence there is a subsequence such that
$f_{n_{k_l}}\to f$ pointwise on $[0,T]\backslash N,$  with $\mu(N)=0.$ As $[0,T]\backslash N$  is dense, by the equicontinuity we obtain that $f_{n_{k_l}}\to f$ everywhere. Thus $f_n\to f$ uniformly. 
\end{proof}

Let 
$$
\dual{y,x}_i=\nrm{x}\lim_{\al \downarrow 0}\frac{\nrm{x}-\nrm{x-\al y}}{\al},
$$
$J(x):=\bk{x^*:\nrm{x}=x^*(x), \nrm{x^*}=1},$ and $F(x):=\bk{x^*:\nrm{x^*}^2=\nrm{x}^2= x^*(x)}. $
Recall:
\begin{enumerate}
\item $\dual{\cdot,\cdot}_- $ is lower semicontinuous,
\item $\dual{y,x}_-=\inf\bk{\Rep (y,f):f\in J(x)}=\min\bk{\Rep (y,f):f\in J(x)}.$
\end{enumerate}

\begin{pro}\label{lambda_plus_mu_ineq}
Let $x_1,x_2,y_1,y_2 \in X,\om>0 ,$ and $f\in\re.$ Then
\begin{equation} \label{eq1}
\nrm{x_1-x_2}\le \nrm{x_1-x_2-\la(y_1-y_2)}+\la f
\end{equation}
for all $0<\la<\om,$ if and only if,
\begin{equation}\label{eq2}
\dual{y_1-y_2,x_1-x_2}_i\le \nrm{x_1-x_2}f.
\end{equation}
Further, if one of the previous inequalities holds then
\begin{equation}\label{eq_la_mu}
(\la+\mu)\nrm{x_1-x_2}\le \la\nrm{x_2-x_1-\mu y_2} + \mu\nrm{x_1-x_2-\la y_1} +\la\mu f
\end{equation}
for all $0<\la,\mu<\om.$
\end{pro}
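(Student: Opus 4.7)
The plan is to prove the three claims in order: the forward direction of the equivalence, the reverse direction (which rests on a monotonicity property of a difference quotient), and finally the two-parameter inequality (\ref{eq_la_mu}) as a short consequence of (\ref{eq1}).

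For (\ref{eq1}) $\Rightarrow$ (\ref{eq2}): I would rearrange (\ref{eq1}) as
\[
\frac{\nrm{x_1-x_2}-\nrm{x_1-x_2-\la(y_1-y_2)}}{\la}\;\le\;f,
\]
multiply by $\nrm{x_1-x_2}$, and send $\la\downarrow 0$. The left-hand side converges by definition to $\dual{y_1-y_2,x_1-x_2}_i$, giving (\ref{eq2}).

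For (\ref{eq2}) $\Rightarrow$ (\ref{eq1}) (assuming $x_1\neq x_2$; the equal case is trivial): the key is that the map
\[
\phi(\la)\;:=\;\frac{\nrm{x}-\nrm{x-\la y}}{\la},\qquad \la>0,
\]
is monotone non-increasing. This follows from convexity of the norm: for $0<\la<s$, write $x-\la y=(1-\la/s)x+(\la/s)(x-sy)$, so that $\nrm{x-\la y}\le(1-\la/s)\nrm{x}+(\la/s)\nrm{x-sy}$, which rearranges to $\phi(\la)\ge\phi(s)$. Hence $\phi(\la)\le\lim_{\mu\downarrow 0}\phi(\mu)=[y,x]_-$ for every $\la>0$. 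Applied with $x=x_1-x_2$, $y=y_1-y_2$ and combined with (\ref{eq2}) in the form $[y_1-y_2,x_1-x_2]_-\le f$, this yields (\ref{eq1}) for all $\la>0$, in particular on $(0,\omega)$.

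For (\ref{eq_la_mu}), I would apply (\ref{eq1}) with the single parameter $\nu:=\la\mu/(\la+\mu)$, which satisfies $\nu\le\min(\la,\mu)<\omega$ and is therefore admissible:
\[
\nrm{x_1-x_2}\;\le\;\left\|(x_1-x_2)-\tfrac{\la\mu}{\la+\mu}(y_1-y_2)\right\|\;+\;\tfrac{\la\mu}{\la+\mu}\,f.
\]
Then I would verify the algebraic identity
\[
(\la+\mu)(x_1-x_2)-\la\mu(y_1-y_2)\;=\;\mu(x_1-x_2-\la y_1)-\la(x_2-x_1-\mu y_2),
\]
which after dividing by $\la+\mu$ expresses the vector in the norm above as a signed combination of $x_1-x_2-\la y_1$ and $x_2-x_1-\mu y_2$. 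The triangle inequality then gives
\[
\left\|(x_1-x_2)-\tfrac{\la\mu}{\la+\mu}(y_1-y_2)\right\|\;\le\;\frac{\mu\nrm{x_1-x_2-\la y_1}+\la\nrm{x_2-x_1-\mu y_2}}{\la+\mu},
\]
and multiplying through by $\la+\mu$ produces (\ref{eq_la_mu}). The only genuinely non-routine step is the monotonicity of $\phi$ used in the reverse direction; the rest is a matter of choosing the right parameter $\nu$ and spotting the identity relating $\mu a-\la b$ to $(\la+\mu)(x_1-x_2)-\la\mu(y_1-y_2)$.
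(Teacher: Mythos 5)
Your proof is correct, but it takes a genuinely different route from the paper's in both nontrivial steps. The forward implication is the same. For (\ref{eq2}) $\Rightarrow$ (\ref{eq1}) the paper follows Ito--Kappel: it selects a support functional $x^*\in F(x_1-x_2)$ attaining $\dual{y_1-y_2,x_1-x_2}_i=\Rep x^*(y_1-y_2)$ and expands $\nrm{x_1-x_2}^2=\Rep x^*\fk{x_1-x_2-\la(y_1-y_2)}+\la\Rep x^*(y_1-y_2)$; you instead use convexity of the norm to show that $\la\mapsto\fk{\nrm{x}-\nrm{x-\la y}}/\la$ is non-increasing, so its value at any $\la>0$ is dominated by its limit $\semim{y,x}$. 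This is more elementary (no duality functionals, hence no appeal to Hahn--Banach) and in fact delivers (\ref{eq1}) for all $\la>0$, not only $\la<\om$. For (\ref{eq_la_mu}) the paper again computes with $x^*$, splitting $(\la+\mu)x^*(x_1-x_2)$ via the same algebraic identity you isolate; you instead apply the already-proved (\ref{eq1}) at the single admissible parameter $\nu=\la\mu/(\la+\mu)$ and finish with the triangle inequality, which turns the third claim into a literal corollary of the first rather than a parallel computation --- a cleaner organization. One shared caveat: both your argument and the paper's implicitly divide by $\nrm{x_1-x_2}$, so the implication (\ref{eq2}) $\Rightarrow$ (\ref{eq1}) is only established for $x_1\ne x_2$; when $x_1=x_2$, inequality (\ref{eq2}) is vacuous while (\ref{eq1}) reads $0\le\la\nrm{y_1-y_2}+\la f$, which can fail for $f<-\nrm{y_1-y_2}$. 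So the equal case is not quite ``trivial'' as you assert --- it is harmless only because in all applications in the paper $f$ is a nonnegative quantity.
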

\begin{proof} The proof is similar to the one in \cite[Prop. 6.5, pp. 187-188]{Ito_Kappel}.  From the inequality (\ref{eq1}) we obtain from the definition of $\dual{\cdot,\cdot}_i$ ,
$$
\dual{y_1-y_2,x_1-x_2}_i\le \nrm{x_1-x_2}f.
$$
Let the inequality (\ref{eq2}) hold, we choose $x^*\in F(x_1-x_2),$ such that $\dual{y_1-y_2,x_1-x_2}_i=\Rep x^*(y_1-y_2).$
Then
\begin{eqnarray*}
\nrm{x_1-x_2}^2&=&\Rep x^*(x_1-x_2)=\Rep x^*(x_1-x_2-\la(y_1-y_2))+\la \Rep x^*(y_1-y_2) \\
&\le&\nrm{x_1-x_2}\fk{\nrm{x_1-x_2-\la(y_1-y_2)}+\la f}.
\end{eqnarray*}
Thus, the first part of the Lemma is proved.

To prove the inequality (\ref{eq_la_mu}), for $\la,\mu >0,$ and $x^*\in F(x_1-x_2),$ we obtain
\begin{eqnarray*}
(\la+\mu)\nrm{x_1-x_2}^2 &=& \la x^*(x_1-x_2)+\mu x^*(x_1-x_2), \\
&=& \mu \Rep x^*(x_1-x_2-\la y_1) -\la\Rep x^*(x_2-x_1-\mu y_2) +\la\mu \Rep x^*(y_1-y_2), \\
&\le& \nrm{x_1-x_2}\fk{\mu\nrm{x_1-x_2-\la y_1}+\la\nrm{x_2-x_1-\mu y_2} + \la\mu f}.
\end{eqnarray*}
Hence the claim is proved. 
\end{proof}

\begin{remk} \label{Integral-Power-Estimate}
Let $k\in \ctq$ and 
$$
\Funk{L}{\ctq}{\ctq}{f}{\bk{(t,s)\mapsto \int_0^tk(t,\tau)f(\tau,s)d\tau}}
$$
then 
$$\nri{L^nf}\le \frac{T^n\nri{k}^n\nri{f}}{n!}
$$
\end{remk}

\begin{remk} \label{Sum-quasinilpotent}
Let $X$ be a Banach space, and $L,S:X \to X$  linear  quasinilpotent and commuting operators
Then $L+S$ is quasinilpotent, i.e. the spectralradius of $L+S$ is zero.
\end{remk}

\begin{proof}
To prove the Remark use \cite[Theorem 11.23 ]{RudinFA}, i.e. let $x,y\in A,$ $A$ a Banach algebra (possibly noncommutative), and $xy=yx,$
then $\sigma(x+y)\subset \sigma(x)+\sigma(y).$ 
\end{proof}

\begin{pro} \label{bd-limsup-Lipschitz}
Let $f\in C((0,T),X),$ s.t.
$$ \limsup_{s \to 0+} \frac{1}{s}\nrm{f(t+s)-f(t)}\le L\quad \forall t\in(0,T).$$
Then $f$ is Lipschitz with a Lipschitz-constant less than or equal to $L.$ 
In particular, if $f$ is Lipschitz we have for the Lipschitz constant $L$
$$L\le \sup_{t\in (0,T)}\limsup_{s\to 0+}\frac{1}{s}\nrm{f(t+s)-f(t)}$$
\end{pro}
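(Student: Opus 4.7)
The plan is to reduce to the standard fact that a continuous $X$-valued function on an interval whose upper-right Dini pseudo-derivative is pointwise bounded by $L$ is globally Lipschitz with constant $L$. I would fix arbitrary $a<b$ in $(0,T)$ and arbitrary $\ep>0$ and aim to prove $\nrm{f(b)-f(a)}\le(L+\ep)(b-a)$; letting $\ep\downarrow 0$ then yields the proposition.

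The main device is the continuous real-valued function
\[
h(t):=\nrm{f(t)-f(a)}-(L+\ep)(t-a),\qquad t\in[a,b],
\]
with $h(a)=0$. I would argue by contradiction: assuming $h(b)>0$, set $c:=\inf\{t\in[a,b]:h(t)>0\}$. Since $\{h>0\}$ is open in $[a,b]$, $h(c)\le 0$; the hypothesis at $a$ gives $h(a+s)<-\ep s/2$ for all small $s>0$, so $c>a$; and the defining property of the infimum together with continuity forces $h(c)\ge 0$, so $h(c)=0$. Finally $h(b)>0$ forces $c<b$. Invoking the limsup hypothesis at the interior point $c$ yields $s_0>0$ with $\nrm{f(c+s)-f(c)}<(L+\ep/2)s$ for $s\in(0,s_0)$, and combined with $h(c)=0$ and the triangle inequality,
\[
h(c+s)\le\nrm{f(c+s)-f(c)}-(L+\ep)s<-\frac{\ep s}{2}<0
\]
for all $s\in(0,s_0)$. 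This contradicts the fact that every right half-neighborhood of $c$ must meet $\{h>0\}$, closing the argument.

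The \emph{in particular} assertion is then an immediate corollary: setting $L_0:=\sup_{t\in(0,T)}\limsup_{s\to 0+}\frac{1}{s}\nrm{f(t+s)-f(t)}$ and applying the first part with $L=L_0$ shows that the best Lipschitz constant of $f$ is at most $L_0$. The only delicate point I expect is the continuity bookkeeping that pins $c$ strictly inside $(a,b)$ with $h(c)=0$, so that the pointwise limsup hypothesis can actually be invoked at $c$; beyond this, the argument is a routine pointwise-to-uniform passage through the $\ep$-perturbation of the target slope.
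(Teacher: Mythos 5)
Your proof is correct and complete: the connectedness/infimum argument with the perturbed slope $L+\ep$ is the standard way to pass from a pointwise bound on the upper right Dini quotient to a global Lipschitz bound, and you handle the delicate points correctly --- $c>a$ via the hypothesis at $a$, $h(c)=0$ by combining openness of $\{h>0\}$ with continuity, and the contradiction from the right half-neighborhood of $c$. The paper itself states this proposition in the appendix without any proof, so there is no argument of the author's to compare against; your write-up supplies exactly the missing justification, and the deduction of the ``in particular'' clause from the main statement is also correct.
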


\begin{remk} Due to the uniformity on $(0,T),$ the same is true when 
$\frac{1}{s}\nrm{f(t-s)-f(t)}$ is considered, substitute $t:=t-s.$
\end{remk}

\begin{pro} \label{integral-ineq.}
The solution to the integral equation 
\begin{equation} \label{infinite-integral-inequality}
u(t) = f(t) + \alpha\intO\exp(- \beta \tau) u(t-\tau) d\tau, 
\end{equation}
 for 
$ 0 < \alpha < \beta,  $ 
is given by 
$$ u(t) = (Rf)(t):= f(t) + \alpha \intO\exp(-(\beta-\alpha)\tau) f(t-\tau)d\tau. $$ 
Note that the resolvent is positive.
\end{pro}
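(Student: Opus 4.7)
The plan is to establish the claim by direct verification combined with a contraction-based uniqueness argument. First I would substitute the candidate $u=Rf$ into the right hand side of (\ref{infinite-integral-inequality}) and compute
\begin{eqnarray*}
\alpha\int_0^\infty e^{-\beta\tau}u(t-\tau)d\tau
&=& \alpha\int_0^\infty e^{-\beta\tau}f(t-\tau)d\tau \\
&& +\alpha^2\int_0^\infty\int_0^\infty e^{-\beta\tau}e^{-(\beta-\alpha)s}f(t-\tau-s)dsd\tau.
\end{eqnarray*}
The main computation is to reduce the double integral. Setting $r=\tau+s$ and applying Fubini (justified by positivity of the kernel and local boundedness of $f$), the inner region becomes $0\le\tau\le r<\infty$, and a short calculation produces
\[
\alpha^2\int_0^\infty e^{-(\beta-\alpha)r}f(t-r)\int_0^r e^{-\alpha\tau}d\tau\,dr
=\alpha\int_0^\infty e^{-(\beta-\alpha)r}f(t-r)(1-e^{-\alpha r})dr.
\]
Splitting this into its two pieces, the $e^{-\beta r}$ contribution exactly cancels the first single integral above, leaving $\alpha\int_0^\infty e^{-(\beta-\alpha)r}f(t-r)dr$. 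Adding $f(t)$ then gives back $u(t)=(Rf)(t)$, which is the desired identity.

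For uniqueness I would consider the difference $v$ of two solutions; then $v(t)=\alpha\int_0^\infty e^{-\beta\tau}v(t-\tau)d\tau$. Since $\alpha/\beta<1$, the linear operator $T:v\mapsto\alpha\int_0^\infty e^{-\beta\tau}v(\cdot-\tau)d\tau$ acts as a strict contraction on $BUC(\re,X)$ (or whatever bounded class the application requires), so $v=0$. Positivity of the resolvent kernel $\alpha e^{-(\beta-\alpha)\tau}$ is immediate from $\alpha>0$ and $\alpha<\beta$.

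The main obstacle is simply the justification of Fubini and of the convergence of the iterated integrals: one must assume enough on $f$ (local boundedness suffices, since $e^{-(\beta-\alpha)\tau}$ is integrable on $\rep$) to legitimately interchange the order. Once this is handled, the remainder is a one-line cancellation. Because the statement is used in this paper only for bounded continuous right hand sides, no subtlety beyond Fubini arises, and the proof reduces to the substitution sketched above.
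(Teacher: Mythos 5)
Your proof is correct, but it proceeds in the opposite direction from the paper's. The paper \emph{derives} the resolvent formula: it first notes that the operator $S u := \alpha\intO e^{-\beta\tau}u(\cdot-\tau)\,d\tau$ on $BUC(\re)$ has norm at most $\alpha/\beta<1$, so $(I-S)^{-1}=\sum_{j\ge0}S^j$ exists and is positive (positivity being inherited term by term from the Neumann series); it then finds the explicit kernel by multiplying the equation by $e^{\beta t}$, introducing $F(t)=\int_{-\infty}^t e^{\beta\tau}u(\tau)\,d\tau$, solving the resulting scalar ODE $F'=e^{\beta t}f+\alpha F$, and discarding the boundary term at $-\infty$. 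You instead \emph{verify} the stated formula by substituting $u=Rf$ into the equation and collapsing the double convolution via the substitution $r=\tau+s$ and Fubini, then obtain uniqueness from the same contraction estimate $\alpha/\beta<1$, and read off positivity directly from the explicit kernel $\alpha e^{-(\beta-\alpha)\tau}\ge 0$. Your computation is right (the $e^{-\beta r}$ piece of $\alpha\intO e^{-(\beta-\alpha)r}(1-e^{-\alpha r})f(t-r)\,dr$ exactly cancels the first-order term), and Fubini is unproblematic for $f\in BUC(\re,X)$, which is the only setting in which the proposition is used. What the verification route buys is brevity and the avoidance of the ODE manipulation and the limit $s\to-\infty$; what the paper's route buys is a derivation that explains where the kernel comes from, and a positivity argument via the Neumann series that does not require an explicit kernel --- the latter is the template reused for the two-dimensional resolvents in Lemmas \ref{general-2-dim-inequality}, \ref{general-2-dim-resolvent} and \ref{main-ineq}, where positivity must be established before any closed form is available.
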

\begin{proof}
Considering
$$
\Funk{S}{BUC(\re)}{BUC(\re)}{u}{\bk{t \mapsto \alpha \int_0^{\infty}\exp(-\beta \tau)u(t-\tau)d\tau},}
$$
we have $\nri{S} \le \frac{\alpha}{\beta} < 1, $ and consequently
$$ (I-S)^{-1}=\sum_{j=0}^{\infty}S^j.$$
Thus the positivity of $S$ carries over to $(I-S)^{-1}.$ To compute the resolvent, multiply equation (\ref{infinite-integral-inequality})
with $\exp(\beta t),$ and define,
$$
F(t):=\int_{-\infty}^t \exp(\beta \tau)u(\tau)d\tau.
$$
Then $F$ has to be the solution to the differential equation,
$$
 F^{\prime}(t)=\exp(\beta t)f(t)+\alpha F(t). $$
Thus,
$$
F(t)=\exp(\alpha(t-s))F(s)+\int_s^t\exp(\beta(t-\tau))\exp(\beta\tau)f(\tau)d\tau.
$$
For $s \to -\infty$ we have,
\begin{eqnarray*}
\btr{\exp(\alpha(t-s))F(s)}&=&\btr{\exp(\alpha(t-s))\int_{-\infty}^s\exp(\beta \tau)u(\tau)d\tau} \\
&\le&\nrm{u}\exp(\alpha t)\exp((\beta-\alpha)s) \\
&& \to 0
\end{eqnarray*}
In consequence we obtain
$$
F(t)=\int_{-\infty}^t\exp(\alpha(t-\tau))\exp(\beta\tau)f(\tau)d\tau.
$$
Differentiation and multiplication with $\exp(-\beta t)$ completes the proof.
\end{proof}

\begin{lem} \label{s-t-integral-inequality} \label{ineq-finite-integral}
Let $t>a$, $\alpha>0$ and $u,f \in C[0,T]$ such that
$$
u(t)\le f(t)+\alpha \int_a^t\exp(-\beta(t-\tau))u(\tau)d\tau.
$$
Then
$$
u(t)\le f(t)+\alpha\int_a^t\exp((\alpha-\beta)(t-\tau))f(\tau)d\tau.
$$
\end{lem}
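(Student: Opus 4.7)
The plan is to reduce the stated inequality to the classical (linear, constant-kernel) Gronwall lemma by peeling off the exponential factor, and then translate the conclusion back. Concretely, I set $v(t):=e^{\beta t}u(t)$ and $g(t):=e^{\beta t}f(t)$. Multiplying the hypothesis by $e^{\beta t}$ and pulling $e^{\beta\tau}$ into the integrand transforms it into
\begin{equation*}
v(t)\le g(t)+\alpha\int_a^t v(\tau)\,d\tau,
\end{equation*}
which is the classical Gronwall form (the $\beta$-dependence has been absorbed into $v$ and $g$).

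Next I would apply the standard ODE trick. Put $V(t):=\int_a^t v(\tau)\,d\tau$, so that $V(a)=0$ and, by continuity, $V'(t)=v(t)\le g(t)+\alpha V(t)$. Multiplying by the integrating factor $e^{-\alpha t}$ yields $\frac{d}{dt}\bigl(e^{-\alpha t}V(t)\bigr)\le e^{-\alpha t}g(t)$; integrating from $a$ to $t$ and using $V(a)=0$ gives
\begin{equation*}
\alpha V(t)\le \alpha\int_a^t e^{\alpha(t-\tau)}g(\tau)\,d\tau.
\end{equation*}
Inserting this back into $v(t)\le g(t)+\alpha V(t)$ produces
\begin{equation*}
v(t)\le g(t)+\alpha\int_a^t e^{\alpha(t-\tau)}g(\tau)\,d\tau.
\end{equation*}

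Finally I would undo the substitution: dividing by $e^{\beta t}$ and using $e^{\alpha(t-\tau)}e^{\beta\tau}/e^{\beta t}=e^{(\alpha-\beta)(t-\tau)}$ gives exactly the claimed bound. There is no real obstacle here — the statement is a textbook Gronwall inequality — and the only place to be careful is the bookkeeping of the exponential factors so that the resolvent kernel comes out as $\exp((\alpha-\beta)(t-\tau))$ rather than $\exp(\alpha(t-\tau))\exp(-\beta(t-\tau))$ written separately. As an alternative, one could instead argue in the spirit of Proposition \ref{integral-ineq.} by writing the right-hand side as $(I-K)f$ with $Kw(t)=\alpha\int_a^t e^{-\beta(t-\tau)}w(\tau)\,d\tau$, noting that $K$ is a positive Volterra operator (hence quasinilpotent on $C[a,T]$), and computing $(I-K)^{-1}$ as a Neumann series that telescopes into the stated convolution kernel; this route has the advantage of making the positivity of the resolvent explicit, but the first approach is shorter.
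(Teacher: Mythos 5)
Your proof is correct. The computation checks out: multiplying the hypothesis by $e^{\beta t}$ does turn the kernel $e^{-\beta(t-\tau)}$ into the constant kernel acting on $v(\tau)=e^{\beta\tau}u(\tau)$, the integrating-factor step with $V(t)=\int_a^t v$ is valid since $V(a)=0$ and $\alpha>0$, and undoing the substitution reproduces the kernel $\exp((\alpha-\beta)(t-\tau))$ exactly as claimed. The route is, however, not the one the paper takes. The paper argues operator-theoretically: it observes that the Volterra operator $Tu(t)=\alpha\int_a^t e^{-\beta(t-\tau)}u(\tau)\,d\tau$ is quasi-nilpotent and positive on $C[a,b]$, so that $(I-T)^{-1}=\sum_j T^j$ is positive and the inequality $(I-T)u\le f$ yields $u\le (I-T)^{-1}f$; it then computes the resolvent kernel explicitly by the same exponential substitution and ODE that appear in Proposition \ref{integral-ineq.}. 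Your ``alternative'' sketch in the last sentences is in fact essentially the paper's actual proof. What your direct argument buys is brevity and self-containment -- it collapses the two steps (positivity of the resolvent, computation of the resolvent) into a single differential inequality. What the paper's route buys is the explicit positivity of the resolvent operator, which is the structural fact reused throughout the paper (notably for the two-dimensional integral inequalities in Lemmas \ref{general-2-dim-inequality} and \ref{general-2-dim-resolvent}, where a direct integrating-factor trick is not available). Either proof is acceptable for this lemma.
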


\begin{proof}
For $a\le t\le b,$ consider the operator
$$
\Funk{T}{C[a,b]}{C[a,b]}{u}{\bk{t\mapsto \alpha \int_a^t\exp(-\beta(t-\tau))u(\tau)d\tau}.}
$$
Clearly, $T$ is quasi-nilpotent. Thus
$$ (I-T)^{-1}=\sum_{j=0}^{\infty}T^j,$$
and the positivity of $T$ carries over to $(I-T)^{-1}.$
The representation of the resolvent is a similar computation to the proof of Proposition \ref{integral-ineq.}. The substitute $F(t):=\int_a^t\exp(\alpha \tau)u(\tau)d\tau$ serves for the proof.\\

The method of doubling the variables leads to an integral inequality of the following type:
\begin{equation} \label{2-dim-inequality}
Y(t,s)-\delta\int_0^t\exp(\alpha(t-\tau))Y(\tau,s)d\tau -\gamma\int_0^s\exp(\beta(s-\sigma))Y(t,\sigma)d\sigma \le G(t,s)
\end{equation}
Due to the previous Remarks we obtain that for $\delta,\gamma >0$ the equation has a positive resolvent.
The aim of this section is the positivity and to compute the resolvent. Thus, we have to consider the following equation,
\begin{equation} \label{2-dim-equation}
F(t,s)-\delta\int_0^t\exp(\alpha(t-\tau))F(\tau,s)d\tau -\gamma\int_0^s\exp(\beta(s-\sigma))F(t,\sigma)d\sigma = G(t,s).
\end{equation}
\end{proof}

\begin{lem} \label{general-2-dim-inequality}
With $\delta,\gamma >0,$ let $Y,G \in \ctq$ and let $Y$ fulfill the inequality (\ref{2-dim-inequality}). Then for the solution $F\in \ctq$ of the equation (\ref{2-dim-equation}), the inequality
$$Y(t,s)\le F(t,s)$$
holds for all $(t,s)\in [0,T]^2.$
\end{lem}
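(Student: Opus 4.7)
The plan is to realize both (\ref{2-dim-inequality}) and (\ref{2-dim-equation}) as statements about a single linear Volterra operator $T$ on $C([0,T]^2)$, and then exploit positivity of the resolvent $(I-T)^{-1}$ to deduce $Y\le F$. Set
\begin{equation*}
(Tu)(t,s):=\delta\int_0^t\exp(\alpha(t-\tau))u(\tau,s)d\tau+\gamma\int_0^s\exp(\beta(s-\sigma))u(t,\sigma)d\sigma
=:(T_1u)(t,s)+(T_2u)(t,s).
\end{equation*}
With this notation the inequality (\ref{2-dim-inequality}) reads $(I-T)Y\le G$, and the equation (\ref{2-dim-equation}) reads $(I-T)F=G$. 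Hence, if I can show that $(I-T)$ is invertible on $C([0,T]^2)$ with a positive resolvent, then applying $(I-T)^{-1}$ to the inequality $(I-T)(F-Y)\ge 0$ yields $F-Y\ge 0$, which is the claim.

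The key observation is that $T_1$ acts only in the $t$-variable and $T_2$ only in $s,$ so $T_1T_2=T_2T_1.$ Each $T_i$ is a one-variable Volterra operator on $\ctq$ with continuous kernel, hence quasi-nilpotent (cf.\ Remark \ref{Integral-Power-Estimate}). By Remark \ref{Sum-quasinilpotent} their sum $T$ is also quasi-nilpotent, so the Neumann series
\begin{equation*}
(I-T)^{-1}=\sum_{n=0}^{\infty}T^{n}
\end{equation*}
converges in operator norm on $\ctq$. Since both $T_1$ and $T_2$ are positive operators on the cone of non-negative functions (the kernels are non-negative and $\delta,\gamma>0$), $T$ is positive, each $T^n$ is positive, and so is $(I-T)^{-1}$.

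With this in place the proof is direct: letting $Z:=F-Y$, subtraction of (\ref{2-dim-inequality}) from (\ref{2-dim-equation}) gives $(I-T)Z\ge 0,$ and applying the positive operator $(I-T)^{-1}$ yields $Z\ge 0,$ i.e.\ $Y(t,s)\le F(t,s)$ for all $(t,s)\in[0,T]^2.$ The only point that requires a moment of care is the quasi-nilpotency argument, because $T$ itself is not of Volterra type in a single variable; this is precisely where the commutation $T_1T_2=T_2T_1$ together with Remark \ref{Sum-quasinilpotent} is essential, and this is the step I would regard as the main (though mild) obstacle.
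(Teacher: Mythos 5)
Your proposal is correct and follows essentially the same route as the paper: the paper's own (very terse) proof likewise invokes Remark \ref{Integral-Power-Estimate} for quasi-nilpotency of the one-variable Volterra pieces and Remark \ref{Sum-quasinilpotent} for their commuting sum, and then deduces positivity of the resolvent from the Neumann series. Your write-up merely spells out the decomposition $T=T_1+T_2$ and the final step $(I-T)(F-Y)\ge 0$ explicitly, which the paper leaves implicit.
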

\begin{proof}
Due to the previous Remark \ref{Integral-Power-Estimate} and Proposition \ref{Sum-quasinilpotent}, we have that 
the resolvent to (\ref{2-dim-equation}) exists and is positive due to the representation as a Neumann series.\\
For the representation of the resolvent, the modified Bessel-Functions $I_0,$ and $I_1$ come into play.
\end{proof}

\begin{lem}\label{general-2-dim-resolvent}
If $G\in \ctq,$ then the solution $F\in \ctq$ is given by
\begin{eqnarray*}
\lefteqn{F(t,s)} \\
&=& G(t,s) \\
&&+\gamma \int_0^s\exp((\beta+\gamma)(s-y))G(t,y)dy \\
&&+\delta \int_0^t\exp((\alpha+\delta)(t-x))G(x,s)dx \\
&&+\gamma \dint \dt \modBessel{(t-x)}{(s-y)}\eadbg{(t-x)}{(s-y)}G(x,y)dydx \\
&&+\delta \dint \ds \modBessel{(t-x)}{(s-y)}\eadbg{(t-x)}{(s-y)}G(x,y)dydx \\
&&+2\gamma\delta \dint \modBessel{(t-x)}{(s-y)}\eadbg{(t-x)}{(s-y)}G(x,y)dydx.
\end{eqnarray*}
\end{lem}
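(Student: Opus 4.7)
The plan is to express the resolvent $(I-L_1-L_2)^{-1}$ as a double Neumann series and then resum it in closed form using modified Bessel functions. Define the Volterra operators
\begin{equation*}
L_1 F(t,s):=\delta\int_0^t e^{\alpha(t-\tau)}F(\tau,s)\,d\tau,\qquad L_2 F(t,s):=\gamma\int_0^s e^{\beta(s-\sigma)}F(t,\sigma)\,d\sigma
\end{equation*}
on $C([0,T]^2)$, so that (\ref{2-dim-equation}) is $(I-L_1-L_2)F=G$. Since $L_1$ acts only in $t$ and $L_2$ only in $s$, they commute; by Remark \ref{Integral-Power-Estimate} each is quasi-nilpotent and positive, and by Remark \ref{Sum-quasinilpotent} so is their sum. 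Hence $F=(I-L_1-L_2)^{-1}G$ exists, and by the binomial theorem
\begin{equation*}
F=\sum_{n\ge 0}(L_1+L_2)^n G=\sum_{k,j\ge 0}\binom{k+j}{k}L_1^k L_2^j G.
\end{equation*}
A routine induction on $k$ yields $L_1^k F(t,s)=\int_0^t \frac{\delta^k(t-\tau)^{k-1}}{(k-1)!}e^{\alpha(t-\tau)}F(\tau,s)\,d\tau$ for $k\ge 1$, with the analogous formula for $L_2^j$.

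Next I split the double sum into four blocks. The block $k=j=0$ contributes $G$. For $k\ge 1,\,j=0$ the series $\sum_{k\ge 1}\delta^k u^{k-1}/(k-1)!=\delta e^{\delta u}$ (with $u=t-\tau$) collapses, producing the single integral $\delta\int_0^t e^{(\alpha+\delta)(t-x)}G(x,s)\,dx$ of the statement; by symmetry the block $k=0,\,j\ge 1$ produces the other single integral. The remaining $k,j\ge 1$ block is the double integral, and with $u=t-x,\,v=s-y$ and the index shift $m=k-1,\,n=j-1$, its kernel is $\delta\gamma e^{\alpha u+\beta v}\tilde V(\delta u,\gamma v)$, where
\begin{equation*}
\tilde V(a,b):=\sum_{m,n\ge 0}\binom{m+n+2}{m+1}\frac{a^m b^n}{m!\,n!}.
\end{equation*}

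The key closed-form identity is $\tilde V(a,b)=e^{a+b}\bigl[2I_0(2\sqrt{ab})+\partial_a I_0(2\sqrt{ab})+\partial_b I_0(2\sqrt{ab})\bigr]$. I first observe $\tilde V=\partial_a\partial_b V$ for $V(a,b):=\sum_{m,n\ge 0}\binom{m+n}{m}a^m b^n/(m!n!)$, and then prove $V(a,b)=e^{a+b}I_0(2\sqrt{ab})$. The latter follows by inserting $\binom{m+n}{m}=(2\pi i)^{-1}\oint(1+z)^{m+n}z^{-m-1}\,dz$ into the series for $V$, summing the resulting double exponential series, and using the generating-function formula $(2\pi i)^{-1}\oint z^{-1}e^{a/z+bz}\,dz=I_0(2\sqrt{ab})$. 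Differentiating $e^{a+b}I_0(2\sqrt{ab})$ twice and invoking the preliminary identity $\partial_a\partial_b I_0(2\sqrt{ab})=I_0(2\sqrt{ab})$ (property (4) of the list) yields the three-term bracket. Applying the chain rule $(\partial_a I_0)(\delta u,\gamma v)=\delta^{-1}\partial_u I_0(2\sqrt{\gamma\delta uv})$ and its analogue for $\partial_b$ transforms the kernel into
\begin{equation*}
\bigl[\gamma\,\partial_u I_0(2\sqrt{\gamma\delta uv})+\delta\,\partial_v I_0(2\sqrt{\gamma\delta uv})+2\gamma\delta\,I_0(2\sqrt{\gamma\delta uv})\bigr]e^{(\alpha+\delta)u+(\beta+\gamma)v},
\end{equation*}
which is precisely the three remaining lines of the stated formula.

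The principal obstacle is the identification $V(a,b)=e^{a+b}I_0(2\sqrt{ab})$; once it is in hand, the coefficient $2$ in front of $I_0$ in the final formula emerges naturally from the product-rule expansion $\partial_a\partial_b(e^{a+b}I_0)=e^{a+b}(I_0+\partial_a I_0+\partial_b I_0+\partial_a\partial_b I_0)$, the two copies of $I_0$ coming respectively from the bare factor and from $\partial_a\partial_b I_0=I_0$. All interchanges of summation with integration are justified by the absolute convergence underlying the quasi-nilpotent bound of Remark \ref{Integral-Power-Estimate}.
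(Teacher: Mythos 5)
Your proof is correct, but it takes a genuinely different route from the paper's. The paper derives the formula by a PDE argument: it multiplies (\ref{2-dim-equation}) by $e^{-\alpha t-\beta s}$, introduces the double antiderivative $H(t,s)=\int_0^t\int_0^s e^{-\alpha\tau-\beta\sigma}F(\tau,\sigma)\,d\sigma\,d\tau$, reduces to the telegraph-type equation $\partial_t\partial_s K-\gamma\delta K=e^{-(\alpha+\delta)t-(\beta+\gamma)s}G$ for $K=e^{-\delta t-\gamma s}H$, solves that by the known Riemann-function/double-Laplace-transform formula (the double convolution with $I_0(2\sqrt{\gamma\delta xy})$, citing Voelker--Doetsch), and then differentiates back; the terms $\partial_t I_0$, $\partial_s I_0$ and the coefficient $2$ arise there from the product rule together with $\partial_t\partial_s I_0=\gamma\delta I_0$. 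You instead resum the Neumann series $\sum_{k,j}\binom{k+j}{k}L_1^kL_2^j$ directly, which works because $L_1$ and $L_2$ commute and have the explicit iterated kernels $\delta^k(t-\tau)^{k-1}e^{\alpha(t-\tau)}/(k-1)!$; your key identification $\sum_{m,n}\binom{m+n}{m}a^mb^n/(m!\,n!)=e^{a+b}I_0(2\sqrt{ab})$ is exactly Vandermonde's identity $\sum_k\binom{m}{k}\binom{n}{k}=\binom{m+n}{m}$ applied to the Cauchy product of $e^{a+b}$ with the $I_0$ series (the contour-integral derivation you sketch is a heavier path to the same fact), and the coefficient $2$ emerges from $I_0+\partial_a\partial_b I_0=2I_0$ in the expansion of $\partial_a\partial_b(e^{a+b}I_0)$ --- the same mechanism as in the paper in a different guise. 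Your route buys self-containedness (no external transform table), makes the positivity of every term of the resolvent manifest, and the interchange of summation and integration is cleanly covered by the factorial bounds of Remark \ref{Integral-Power-Estimate}; the paper's route explains structurally where the Bessel kernel comes from, namely as the Riemann function of the underlying hyperbolic operator. Both yield the same six terms with the same coefficients.
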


\begin{proof}
We start with mulitiplying equation (\ref{2-dim-equation}) with $\exp(-\alpha t-\beta s).$ Thus,
\begin{eqnarray} \label{multiplied-2dim-eq}
\lefteqn{\exp(-\alpha t-\beta s)F(t,s)-} \\ \nonumber 
&& -\delta\int_0^t\exp(-\alpha \tau-\beta s)F(\tau,s)d\tau-\gamma\int_0^s\exp(-\alpha t-\beta \sigma)F(t,\sigma)d\sigma \\ \nonumber
&=&\exp(-\alpha t-\beta s)G(t,s).
\end{eqnarray}
Next we define
$$
H(t,s)=\dint \exp(-\alpha \tau-\beta \sigma)F(\tau,\sigma)d\sigma d\tau.
$$
As boundary conditions we have
$$
H(t,0)=H(0,s)=0 \ \forall \ 0\le s,t\le T.
$$
From (\ref{multiplied-2dim-eq}) and the definition of $H,$ we obtain the following differential equation:
\begin{equation} \label{H-diff-equation}
\dt\ds H(t,s)-\delta \ds H(t,s)-\gamma\dt H(t,s)=\exp(-\alpha t-\beta s)G(t,s).
\end{equation}

Next we write
$$
H(t,s)=\exp(\delta t +\gamma s)K(t,s).
$$
Then $K$ is a solution of the differential equation

\begin{equation} \label{K-diff-equation}
\dt\ds K(t,s)-\gamma\delta K(t,s)=\exp(-(\alpha+\delta)t-(\beta+\gamma)s)G(t,s).
\end{equation}
Using the transformation given in \cite[p.69,(6),p. 208,Nr.7]{VoelkerDoetsch},
and $K(t,0)=K(0,s)=0,$ we have
\begin{eqnarray*}
K(x,y)&=&\modBessel{x}{y}**\exp(-(\alpha+\delta)x-(\beta+\gamma)y)G(x,y)  \\
&=& \dint \modBessel{(t-x)}{(s-y)}\exp(-(\alpha+\delta)x-(\beta+\gamma)y)G(x,y)dydx.
\end{eqnarray*}
Transforming back gives
$$
\exp(-\alpha t -\beta s)F(t,s)=\dt\ds H(t,s)=\dt\ds\fk{\exp(\delta t +\gamma s)K(t,s)}.
$$
Consequently, we have to compute
\begin{eqnarray*}
\lefteqn{\ds\fk{\exp(\delta t +\gamma s)K(t,s)}} \\
&=& \gamma \edg \dint \modBessel{(t-x)}{(s-y)}\eadbgm{x}{y}G(x,y)dydx \\
&&+\edg \int_0^t\modBessel{(t-x)}{0}\eadbgm{x}{s}G(x,s)dx \\
&&+\edg \dint \ds \modBessel{(t-x)}{(s-y)}\eadbgm{x}{y}G(x,y)dydx.
\end{eqnarray*}
Applying $\modBessel{0}{}=1,$ leads to
\begin{eqnarray*}
\lefteqn{\ds\fk{\exp(\delta t +\gamma s)K(t,s)}} \\
&=& \gamma \edg \dint \modBessel{(t-x)}{(s-y)}\eadbgm{x}{y}G(x,y)dydx \\
&&+\edg \int_0^t\eadbgm{x}{s}G(x,s)dx \\
&&+\edg \dint \ds \modBessel{(t-x)}{(s-y)}\eadbgm{x}{y}G(x,y)dydx.
\end{eqnarray*}
Finally,
\begin{eqnarray*}
\lefteqn{\dt\ds\fk{\exp(\delta t +\gamma s)K(t,s)}} \\
&=& \gamma \delta \edg \dint \modBessel{(t-x)}{(s-y)}\eadbgm{x}{y}G(x,y)dydx \\
&&+\gamma \edg\int_0^s\eadbgm{t}{y}G(t,y)dx \\
&&+\gamma\edg\dint\dt \modBessel{(t-x)}{(s-y)}\eadbgm{x}{y}G(x,y)dydx \\
&&+\delta \edg\int_0^t\eadbgm{x}{s}G(x,s)dydx \\
&&+\edg\eadbgm{t}{s}G(t,s) \\
&&+\delta \edg \dint \ds \modBessel{(t-x)}{(s-y)}\eadbgm{x}{y}G(x,y)dydx \\
&&+\edg\int_0^s\ds \modBessel{0}{(s-y)}\eadbgm{x}{y}G(x,y)dydx \\
&&+\edg \dint \dt\ds \modBessel{(t-x)}{(s-y)}\eadbgm{x}{y}G(x,y)dydx .
\end{eqnarray*}
Using the known values,
\begin{eqnarray*}
\partial_2 \bk{(x_1,x_2)\mapsto \modBessel{(t-x_1)}{(s-x_2)}}(t,\alpha)&=&0, \\
\dt\ds \modBessel{(t-x)}{(s-y)}&=&\gamma \delta \modBessel{(t-x)}{(s-y)},
\end{eqnarray*}
it remains to multiply with $\exp(\alpha t +\beta s),$ and we are finished.

Similar to the initial value case we consider the whole line one, i.e., for $\om <0,$ we have to look for a positive resolvent in a
Banach lattice. We will compute
$(I-T_{\la,\mu})^{-1}$ for,
\begin{eqnarray*}
T_{\la, \mu}f(t,s) &:=&
\alm\intO \exp(-\dl \tau)f(t-\tau,s) d\tau \\
&&+\glm \intO \exp(-\dm \tau)f(t,s-\tau)d\tau\
\end{eqnarray*}
on $BUC(\re \times \re).$ 
\end{proof}

The next lemma provides a representation for the positive resolvent of the
operator $T_{\la,\mu}.$
We cite the result \cite[Lemma 4.2 p. 648]{Kreulichbd}.

\begin{lem} \label{main-ineq}
For the operator $T_{\la,\mu}$ defined above we have:
\begin{enumerate}
\item $\nrm{T_{\la,\mu}} \le \dlpmp{\la+\mu} < 1.$
Consequently,
$$\nrm{(I-T_{\lambda,\mu})^{-1}} \le \frac{\lpmplmw}{\lambda\mu\om},$$
and $ (I-T_{\lambda,\mu})^{-1} $ is positive.
\item 
Letting $\Lambda=\la+\mu-\la\mu\om,$ 
the resolvent is given by
\begin{eqnarray*}
\lefteqn{(I-T_{\lambda, \mu})^{-1}u(t,s) = u(t,s) + } \\
&&+\ \frac{\la}{\mu\Lambda} \intO\exp(\frac{\la\om-1}{\Lambda}y)u(t,s-y)dy \\
&&+\ \frac{\mu}{\la\Lambda} \intO\exp(\frac{\mu\om-1}{\Lambda}x)u(t-x,s)dx \\
&&+\ \frac{\la}{\mu\Lambda} \intO\intO \partial_xI_0\fk{2\sqrt{\frac{xy}{\Lambda^2}}}
            \exp\fk{\frac{\mu\om-1}{\Lambda}x+\frac{\la\om-1}{\Lambda}y}
      u(t-x,s-y)dydx \\
&&+\ \frac{\mu}{\la\Lambda} \intO\intO \partial_yI_0\fk{2\sqrt{\frac{xy}{\Lambda^2}}}
            \exp\fk{\frac{\mu\om-1}{\Lambda}x+\frac{\la\om-1}{\Lambda}y}
      u(t-x,s-y)dydx \\\\
&&+\ \frac{2}{\Lambda^2} \intO\intO I_0\fk{2\sqrt{\frac{xy}{\Lambda^2}}}
            \exp\fk{\frac{\mu\om-1}{\Lambda}x+\frac{\la\om-1}{\Lambda}y}
      u(t-x,s-y)dydx. \\ \\
\end{eqnarray*}
\end{enumerate}
\end{lem}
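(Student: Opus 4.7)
The plan is to establish Part (1) by a short direct calculation and to derive the explicit resolvent formula in Part (2) by paralleling the proof of Lemma \ref{general-2-dim-resolvent}, adapted from the finite interval $(0,t)\times(0,s)$ to the whole line. For Part (1), I first observe that $T_{\la,\mu}$ is a positive operator on $BUC(\re\times\re)$, since the kernels $\exp(-\tau/\la)$, $\exp(-\tau/\mu)$ and the coefficients $\alm,\glm$ are nonnegative. For a translation-averaging positive operator of this form the operator norm equals the value on the constant function $\mathbf{1}$, and a direct computation gives $T_{\la,\mu}\mathbf{1}=\alm\la+\glm\mu=\frac{\mu}{\lpmplmw}+\frac{\la}{\lpmplmw}=\frac{\la+\mu}{\lpmplmw}$. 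Since $\om<0$ in the whole-line setting, $\lpmplmw=\la+\mu-\la\mu\om>\la+\mu$, hence $\nrm{T_{\la,\mu}}<1$, and the Neumann series $(I-T_{\la,\mu})^{-1}=\sum_{k\ge 0}T_{\la,\mu}^{k}$ converges in operator norm, produces the stated resolvent bound, and is termwise positive.

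For Part (2), I would substitute $\tau\mapsto t-\tau$ in the first convolution and $\tau\mapsto s-\tau$ in the second to rewrite $F=u+T_{\la,\mu}F$ as $F(t,s)=u(t,s)+\frac{\mu}{\la\Lambda}e^{-t/\la}\int_{-\infty}^{t}e^{\xi/\la}F(\xi,s)d\xi+\frac{\la}{\mu\Lambda}e^{-s/\mu}\int_{-\infty}^{s}e^{\eta/\mu}F(t,\eta)d\eta$ with $\Lambda=\lpmplmw$, then introduce the double antiderivative $H(t,s)=\int_{-\infty}^{t}\int_{-\infty}^{s}e^{\xi/\la+\eta/\mu}F(\xi,\eta)d\eta d\xi$. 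Multiplying by $e^{t/\la+s/\mu}$ and using $\dt\ds H=e^{t/\la+s/\mu}F$, $\ds H=e^{s/\mu}\int_{-\infty}^{t}e^{\xi/\la}F(\xi,s)d\xi$, $\dt H=e^{t/\la}\int_{-\infty}^{s}e^{\eta/\mu}F(t,\eta)d\eta$ recovers the hyperbolic PDE $\dt\ds H-\frac{\mu}{\la\Lambda}\ds H-\frac{\la}{\mu\Lambda}\dt H=e^{t/\la+s/\mu}u(t,s)$, with $H$ and its partial derivatives vanishing at $-\infty$. The substitution $H=e^{\delta t+\gamma s}K$ with $\delta=\mu/(\la\Lambda)$, $\gamma=\la/(\mu\Lambda)$ removes the first-order terms and, using $\gamma\delta=1/\Lambda^{2}$, leaves $\dt\ds K-\frac{1}{\Lambda^{2}}K=e^{(1-\mu\om)t/\Lambda+(1-\la\om)s/\Lambda}u(t,s)$. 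The fundamental solution of this operator supported in the positive quadrant is $I_{0}(2\sqrt{xy/\Lambda^{2}})$, so $K$ is the two-dimensional convolution of this Bessel kernel against the right-hand side; transforming back via $F=e^{-t/\la-s/\mu}\dt\ds H$ and expanding $\dt\ds(e^{\delta t+\gamma s}K)$ by the Leibniz rule, one reuses the PDE to rewrite $\dt\ds K=(1/\Lambda^{2})K+\text{source}$ and reads off exactly the six summands of the claimed formula.

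The hard part will be the algebraic bookkeeping in this last expansion: one must track the prefactors $\mu/(\la\Lambda)$ and $\la/(\mu\Lambda)$ carefully through the substitutions $x=t-\xi$, $y=s-\eta$ to identify the two single-variable integrals (which emerge from the boundary contributions in $\ds K$ and $\dt K$ via $I_{0}(0)=1$), the two $\partial_{x}I_{0}$ and $\partial_{y}I_{0}$ terms (from differentiating the Bessel kernel under the integral), and the final $2/\Lambda^{2}$ double-Bessel term (combining the $\gamma\delta K$ from Leibniz with the $(1/\Lambda^{2})K$ produced by the PDE). A subsidiary analytic point is the justification that $H$ is well defined and vanishes at $-\infty$: since $F$ is bounded and $\om<0$ yields $(1-\mu\om)/\Lambda>0$, the weight $e^{(1-\mu\om)\xi/\Lambda+(1-\la\om)\eta/\Lambda}$ beats the $I_{0}$ growth at infinity and the defining integrals converge. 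Once this is in place, the calculation proceeds in direct analogy with the proof of Lemma \ref{general-2-dim-resolvent}, with $(0,t)$ and $(0,s)$ replaced by $(-\infty,t)$ and $(-\infty,s)$ throughout.
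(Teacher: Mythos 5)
Your proposal is correct and follows essentially the same route as the paper: the paper cites \cite[Lemma 4.2]{Kreulichbd} and then sketches exactly this derivation, namely the norm bound via positivity of the kernel for part (1), and for part (2) the reduction to the hyperbolic PDE for the double antiderivative $H$, the substitution $H=e^{\delta t+\gamma s}K$ with $\delta=\mu/(\la\Lambda)$, $\gamma=\la/(\mu\Lambda)$, the Bessel kernel $I_0$ as fundamental solution, and the Leibniz expansion back to the six summands, carried out as in Lemma \ref{general-2-dim-resolvent} but with the lower integration limit $a$ pushed to $-\infty$. The only cosmetic difference is that the paper phrases the passage to the whole line via approximating operators $T^a$ on $(a,\infty)$, whereas you work at $a=-\infty$ directly and justify convergence of the defining integrals there, which amounts to the same argument.
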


\begin{proof} The previous lemma could be derived from Lemma  \ref{general-2-dim-resolvent}, with the setting:
\begin{gather*}
"t",\quad \alpha:=-\frac{1}{\la},\quad \delta:=\frac{\mu}{\la(\lmw)},\mbox{ consequently } \alpha+\delta=\frac{\mu\om -1}{\lmw},\\
"s",\quad \beta:=-\frac{1}{\mu},\quad \gamma:=\frac{\la}{\mu(\lmw)},\mbox{ consequently } \beta+\gamma=\frac{\la\om -1}{\lmw}.
\end{gather*}
For fixed and small $\la,\mu>0,$ and $I_a=(a,\infty),$ ($a=-\infty$ possible), we consider the approximating operators,
$$
\Funk{T^a}{BUC(I_a\times I_a)}{BUC(I_a\times I_a)}{f}
{\frac{\mu}{ \la\Lambda}\int_a^t \exp(-\dl(t- \tau))f(\tau,s) d\tau +\frac{\la}{\mu\Lambda} \int_a^s \exp(-\dm (t-\tau))f(t,\tau)d\tau.}
$$
Further, use the help function
$$
H^a(t,s)=\int_a^t\int_a^s\exp(\frac{1}{\la}\tau+\frac{1}{\mu}\sigma)F(\tau,\sigma)d\sigma d\tau,
$$
with the boundary condition,
$$
H^a(t,a)=H^a(a,s)=0, \forall a\le t,s\le T.
$$
Letting
$$
K^a(x,y)=\int_a^t\int_a^s \modBesselL{(t-x)}{(s-y)}\exp(-\mlmw x-\llmw y)G(x,y)dydx,
$$ 
we find that $K^a$ satisfies the partial differential equation  (\ref{K-diff-equation}), with the boundary condition $K(t,a)=K(a,s)=0.$ Simple integration of the differential equation and the fact that $\bk{f\to\int_a^t\int_a^sf(\tau,\sigma)d\sigma d\tau}$ is quasinilpotent proves the uniqueness of the solution in $BUC(I_a\times I_a)$.
Thus, we are in the situation to follow the proof of Lemma \ref{general-2-dim-resolvent}, while differentiating a convolution integral, only the differentiation variable comes into play. 
\end{proof}


\begin{thebibliography}{99}
\bibitem{Alvarez} Alvarez, F. and Peypouquet, J. {\it Asymptotic equivalence and Kobayashi-type estimates for nonautonomous monotone operators in Banach Spaces} Discr. Cont. Dynamical Sys. 25(4) (2009), pp. 1109-1128.

\bibitem{AulbachMinh} 
	Aulbach, B. and Van Minh, N. {\it A sufficient condition for almost 
	periodicity of nonautonomous nonlinear evolution equations,}
	Nonlinear Anal. 51 (2002), pp. 145-153.

\bibitem{BattyChill} Batty, Ch. and Chill, R.,{\it Approximation and Asymptotic Behaviour of Evolution Families}, Diff. Int. Eq. Vol. 15, No 4 (2002) p. 477-512.

\bibitem{BattyHutterRaebiger} Batty, Ch.,Hutter,W. and R\"abiger, F. {\it Almost periodicity of mild solutions of inhomgeneous periodic Cauchy problems.}  J. Diff. Eq. 156 (1999) no. 2, 309-327. MR1705399.

\bibitem{Benilan} Ph. Benilan, {\it Solutions int$\acute{e}$grales d' $\acute{e}$quations d' $\acute{e}$volution dans un espace de Banach.} C. R. Acad. Sci. Paris Ser. I Math 274(1972),47-50.

\bibitem{Bose_Joshi} Bose, R.K. and Joshi, M.C. {\it Some Topics in Nonlinear Functional Analysis}, Wiley Eastern Limited, (1985)

\bibitem{Brychkov} Brychkov,Yu.A., Glaeske, H.-J., Prudnikov, A.P., Tuan, Kim Vu {\it Multidimensional Integral Transformations,} Gordon and Breach Sc. Pub. (1992)

\bibitem{Chicone} Chicone, C. and Latushkin, Y. {\it Evolution Semigroups in Dynamical Systems and Differential Equations} Math Sur. and Mono. AMS 70, (1999).

\bibitem{CrandEvans} Crandall, M.G. and Evans, L.C., { \it On the
  relation of the operator
$\partial/\partial s +\partial/\partial \tau$ to evolution
governed by accretive operators.} Israel J. Math. {\bf 29}, (1975),
261--278. MR 52\#11676. Zbl 351.34037.

\bibitem{CrandallPazy}
	Crandall, M.G. and Pazy, A., {\it Nonlinear Evolution Equations in Banach Spaces} 
	Israel J. Math. 11 (1972) p. 57-94.
	
\bibitem{Evans} Evans, L.C.,{\it Nonlinear Evolution Equations in Arbitrary Banach Spaces} Israel J. Math. 26(1), (1977), pp.1-42.
	
\bibitem{Dal-Krein} Daleckii Ju,L and Krein, M.G. {\it Stability of Solutions of Differential Equations in Banach Space}
Amer. Math. Soc. Providence, R1, 1974

\bibitem{Ghavidel} Ghavidel,, S.,M. {\it Flow Invariance for Solutions to nonautonomous Partial Differential Delay Equations,} J. Math. Anal. Appl. 345, pp. 854-870, (2008).

\bibitem{Gripenberg} Gripenberg, G., {\it Volterra integro--differential equations
with accretive nonlinearity.} J. Diff. Equations, {\bf 60}, (1985), 57-79. 
MR 86m:45017. Zbl 575.45013.
  
\bibitem{Ito_Kappel}
Ito, K. and Kappel, F. {\it Evolution Equations and Approximations,} Ser. Adv. Math. Appl. Sci., World Scientific (2002).

\bibitem{TKato} Kato, T., {\it Nonlinear semigroups and evolution equations,}  J. Math. Soc, Japan Vol. 18, No. 4 1967.

\bibitem{bd-Kartsatos} Kartsatos, A.G. {\it The Existence of bounded Solutions on the Real Line of Perturbed Nonlinear Evolution Equations in General Banach Spaces,} Nonl. Anal. Vol. 17, No. 11,pp. 1085-1092, 1991.


\bibitem{KreulichDiss} Kreulich, J. {\it Eberlein Weak Almost Periodicity and Differential Equations in Banach Spaces} \newblock Ph.D. thesis, Universit\"at Essen, (1992).

\bibitem{KreulichEwap} Kreulich, J. {\it Eberlein-Weakly Almost-Periodic Solutions of Evolution Equations in Banach Spaces} Diff. Int. Eq.  9,No.5 (1996) pp. 1005-1027.

\bibitem{Kreulichbd} Kreulich, J. {\it Bounded Solutions of Nonlinear Cauchy Problems,} Abstract and  Appl. Anal. 7:12 (2002) 637-661.


\bibitem{litcanu} Litcanu, G. {\it Periodic Solutions to Functional Evolution Equations,} Nonl. Anal. 52 (2003) 305 – 314.

\bibitem{Kobayasi_Miyadera} Kobayasi, K. and Miyadera, I. {\it On the Asymptotic Behavior of Almost Orbits of Nonlinear Contraction Semigroups in Banach Spaces}, Nonl. Anal. 6 (1982) pp. 349-365.

\bibitem{Olver} Olver, F.W.J., {\it Introduction to Asymptotics and Special Functions}  (1974) Academic Press Inc. 

\bibitem{Pruess_int} 
  Pr\"{u}ss, J. {\it Bounded solutions of Volterra equations,} SIAM J. Math. Anal. 19, p. 133-149, 1988

\bibitem{RudinFA} Rudin, W.; Functional Analysis, Mc Graw Hill (1973).

\bibitem{Ruess_Summers_stability} Ruess, W.M., and  Summers, W.H.,{\it Almost Periodicity and Stability for Solutions to Functional Differential Equations with Infinite Delay}, Diff. Int. Eq. Vol.9, No.6, pp. 1225-1252, (1996)

\bibitem{RuessPhong} Ruess, W.M., and Vu, Quoc Phong, {\it Asymptotically almost periodic solutions of evolution equations in Banach Spaces}, J. Diff. Eq. 122 (1995) No. 2, pp. 282-301.

\bibitem{Veech} Veech, W.A., Automorphic Functions on Groups. Amer. J. Math. 87(3) (1965), pp. 719-751.

\bibitem{VoelkerDoetsch} Voelker, D. \& Doetsch, G. {\it Die zweidimensionale Laplace-Transformation}, 1950, Lehrb. und Mono. aus dem Gebiete der exakten Wiss. Bd.12

\bibitem{Wolf} Wolf, G. University Duisburg-Essen, Personal communication, esp. Lemma \ref{SomeIntegrals}.

\end{thebibliography}
\end{document}